\def\QQ{{\mathbb Q}}
\def\AA{{\mathbb A}}
\def\CC{{\mathbb C}}
\def\RR{{\mathbb R}}
\def\ZZ{{\mathbb Z}}
\def\fp{{\mathfrak p}}
\def\GL{{\mathrm{GL}}}
\def\SL{{\mathrm{SL}}}
\def\Gi{{\Gamma^\mathrm{inn}}}
\def\Go{{\Gamma^\mathrm{out}}}
\def\Fi{{F^\mathrm{inn}}}
\title[On the Image of Automorphic Galois Representations]{On the Image of Automorphic Galois Representations} 
\author{Alireza Shavali} 
\theoremstyle{plain}
\newtheorem{theo}{Theorem}[section]
\newtheorem{prop}[theo]{Proposition}
\newtheorem{cor}[theo]{Corollary}
\newtheorem{lemma}[theo]{Lemma}
\theoremstyle{remark}
\newtheorem{rem}[theo]{Remark}
\theoremstyle{definition}
\newtheorem{defi}[theo]{Definition}
\newtheorem{exmp}[theo]{Example}
\newtheorem{conj}[theo]{Conjecture}
\begin{document} 
\maketitle

\begin{abstract}
    In this paper, we study extra-twists for automorphic representations of $\GL_n$ and use them to give a precise description of the image of the Galois representations associated with regular algebraic cuspidal automorphic representations of $\GL_3$ over totally real fields. We also formulate a conjecture for the $\GL_n$-case and show how it follows from some standard conjectures in the Langlands program. Finally, assuming the existence of a motive associated with the representation, we study the relation of our constructions with the Mumford-Tate group.
\end{abstract}

{\small \tableofcontents}
\renewcommand{\baselinestretch}{1}\normalsize

\section{Introduction}
From the viewpoint of the Langlands program, one expects a connection between three different worlds: Motives, Galois Representations, and Automorphic Representations. One expects to be able to translate different features of the objects from one world to the others. For instance, the motive having many symmetries (having a big endomorphism ring) should force the Galois representation to have small image (as the Mumford-Tate conjecture explains) and these both should translate to the associated automorphic representation having special properties. This line of study can be traced back to the work of Serre on Galois representations associated with rational elliptic curves and his famous open image theorem \cite{serre1972proprietes}. This vague philosophy is our main guide throughout this paper. 

We will mostly focus on the connection of the automorphic side and the Galois side. The picture is well-understood in the $\GL_2/\QQ$ case. Serre's result can also be viewed as determining the image of the Galois representations associated with weight 2 modular forms with rational Fourier coefficients. Later, Ribet realized that for general modular forms of weight 2, the endomorphism of the associated abelian variety being big, translates into the form having some kinds of symmetries which he called inner-twists \cite{ribet2006galois} and was able to generalize Serre's result to this context \cite{ribet1980twists}. Momose then worked out the higher weight case \cite{momose1981adic} and Nekov{\'a}{\v{r}} generalized their work to Hilbert modular forms \cite{nekovavr2012level}. 
The main goal of this paper is to generalize their work to the $\mathrm{GL}_3$-case. The main observation is that in this case, one should also take into account the so-called "outer-twists" of an automorphic representation. This is due to the fact that not every automorphism of $\mathrm{SL}_3$ is inner. Inner- and outer-twists of a representation together form a group that we call the group of extra-twists.

Let us state our main result. 
Let $K$ be a totally real field and let $\pi$ be a regular algebraic cuspidal automorphic representation of $\GL_3(\AA_K)$ that is of general type, i.e. it neither satisfies $\pi \simeq \pi \otimes \chi$ for a non-trivial Hecke character $\chi$, nor $\pi \simeq \pi^\vee \otimes \eta$ for any Hecke character $\eta$.
Let $\QQ(\pi)$ be its Hecke field with Galois closure $E$, $\rho_{\pi,p}:G_K \rightarrow \GL_3(\QQ(\pi)\otimes_{\QQ} \overline{\QQ_p})$ the $p$-adic Galois representation attached to $\pi$ and $\Gamma \subseteq \mathrm{Aut}(E)$ the group of extra-twists which will be defined later. Here is our main result:
\begin{theo}\label{main}
      Let $F=E^\Gamma$  be the field fixed by all extra-twists of $\pi$. Then there exists a finite extension $L/K$ and a semi-simple algebraic group $H_p$ defined over $F_p:=F\otimes_{\QQ}\QQ_p$ which is a form of $\mathrm{SL}_3$ (constructed using the extra-twists), such that $\rho_{\pi,p}(G_L)$ is contained in $H_p(F_p)\cdot\QQ_p^{\times} \subseteq \GL_3(E\otimes_{\QQ} \overline{\QQ_p})$ and it is open in the $p$-adic topology. 
\end{theo}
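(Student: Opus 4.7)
The plan is to follow the Ribet--Momose--Nekov{\'a}{\v{r}} blueprint from the $\mathrm{GL}_2$ setting, with the main new ingredient being outer twists, which arise because $\mathrm{SL}_3$ has a non-trivial outer automorphism (the Dynkin involution $x \mapsto x^{-T}$). First I would unpack each extra-twist into Galois-theoretic data. For $\sigma \in \Gamma$, one of $\pi^\sigma \simeq \pi \otimes \chi_\sigma$ or $\pi^\sigma \simeq \pi^\vee \otimes \chi_\sigma$ holds; passing to $p$-adic Galois representations (strong multiplicity one together with \v{C}ebotarev identifies $\pi$ from $\rho_{\pi,p}$), each such isomorphism is implemented by a matrix $A_\sigma \in \GL_3(E \otimes_{\QQ} \overline{\QQ_p})$ satisfying
\[
\sigma(\rho_{\pi,p}(g)) \;=\; \chi_\sigma(g)\, A_\sigma\, \phi_\sigma(\rho_{\pi,p}(g))\, A_\sigma^{-1},
\]
with $\phi_\sigma = \mathrm{id}$ in the inner case and $\phi_\sigma(x) = x^{-T}$ in the outer case. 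Absolute irreducibility of $\rho_{\pi,p}$ forces each $A_\sigma$ to be unique up to scalars, and a direct computation shows $\sigma \mapsto (A_\sigma \bmod \overline{\QQ_p}^\times,\, \phi_\sigma)$ is a $1$-cocycle valued in $\mathrm{Aut}(\mathrm{SL}_3) = \mathrm{PGL}_3 \rtimes \ZZ/2$.

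Next I would use this cocycle to perform Galois descent. The twisted $\Gamma$-action on the constant group scheme $\mathrm{SL}_{3,\, E \otimes \overline{\QQ_p}}$ given by $g \mapsto A_\sigma\, \phi_\sigma({}^\sigma g)\, A_\sigma^{-1}$ descends to a semi-simple form $H_p$ of $\mathrm{SL}_3$ over $F_p = F \otimes_{\QQ} \QQ_p$. By construction the image of $\rho_{\pi,p}$ intertwines the natural and the twisted $\Gamma$-actions up to a scalar ambiguity, and this scalar ambiguity (together with the characters $\chi_\sigma$ and the $2$-cocycle obstruction from the non-uniqueness of the $A_\sigma$) becomes tractable after replacing $K$ by a suitable finite extension $L$. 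Finiteness of $\Gamma$ and standard arguments with characters of finite order guarantee the existence of such an $L$, and yield the containment $\rho_{\pi,p}(G_L) \subseteq H_p(F_p) \cdot \QQ_p^\times$.

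Finally I would establish openness. Given the containment, a Bogomolov-type result says that it suffices to show Zariski density of $\rho_{\pi,p}(G_L)$ inside $H_p \cdot \mathbb{G}_m$ over $F_p$, which reduces to computing the algebraic monodromy group of $\rho_{\pi,p}$ over $\overline{\QQ_p}$ at each place above $p$ and verifying that, modulo centre, it is the full $\mathrm{SL}_3$. This is where the general-type hypothesis enters decisively: any proper reductive subgroup of $\mathrm{GL}_3$ containing the image yields, via \v{C}ebotarev and the classification of reductive subgroups of $\mathrm{GL}_3$, either a self-twist of $\pi$ by a Hecke character or a self-duality up to twist, both of which are excluded. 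The main obstacle is precisely this last step --- systematically ruling out every intermediate reductive subgroup of $\mathrm{GL}_3$ (parabolic images, automorphic inductions from quadratic extensions, images of $\mathrm{Sym}^2$ of $\mathrm{GL}_2$-representations, $\mathrm{SO}_3$- and $\mathrm{Sp}$-type subgroups, etc.) and matching each non-trivial possibility with a specific self-twist or self-dual twist forbidden by the general-type assumption. Making this correspondence tight, and handling all cases uniformly at all places above $p$, is where the core technical work of the proof lies.
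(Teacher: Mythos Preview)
Your construction of the cocycle and the form $H_p$ via descent is essentially the paper's approach, and the containment after a finite extension $L$ (trivializing the finite-order characters $\chi_\sigma$) is correct in outline. The paper does this in Sections~2.2 and~3.2, working with the Lie algebra rather than the Zariski closure, but the content is the same.

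The genuine gap is in your openness argument. You write that Zariski density of $\rho_{\pi,p}(G_L)$ in $H_p\cdot\mathbb{G}_m$ over $F_p$ ``reduces to computing the algebraic monodromy group of $\rho_{\pi,p}$ over $\overline{\QQ_p}$ at each place above $p$ and verifying that, modulo centre, it is the full $\mathrm{SL}_3$.'' This reduction is false. After base change, $H_p \otimes_{F_p} \overline{F_p} \simeq \prod_{\sigma: F \hookrightarrow \overline{\QQ_p}} \mathrm{SL}_3$, and surjectivity onto each factor does \emph{not} imply surjectivity onto the product. The missing step is a Goursat-type argument: one must show that for distinct $\sigma,\tau: F \hookrightarrow \overline{\QQ_p}$ the representations $V_\sigma$ and $V_\tau$ are neither isomorphic nor dual as $G_{L'}$-representations for any finite $L'/L$. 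The paper accomplishes this via Lemma~\ref{cent}, which identifies $F^{\mathrm{inn}}$ and $F$ as the fields generated by $\{a_v\}$ and $\{a_v+b_v\}$ respectively (coefficients of the Frobenius characteristic polynomials), and Lemma~\ref{isoRep}, which converts these identifications into the non-isomorphism statements needed for Goursat. Without this, the image could a priori sit in a graph-type subgroup of the product, and your argument would not detect it.

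A smaller issue: your list of subgroups to exclude is not quite adapted to $\GL_3$. Inductions come from \emph{cubic} (not quadratic) extensions and correspond to the self-twist case; there are no $\mathrm{Sp}$-type subgroups in odd dimension; and $\mathrm{SO}_3$ is already the $\mathrm{sym}^2(\GL_2)$ image, so these are not separate cases. The paper's classification reduces cleanly to two possibilities for the semisimple part, $\mathfrak{sl}_3$ or $\mathrm{sym}^2(\mathfrak{sl}_2)$, and eliminates the latter via essential self-duality.
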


In other words, for all $p$, the algebraic group $(\mathrm{Res}^{F_p}_{\QQ_p}H_p)\cdot\mathbb{G}_{m,\QQ_p}$ is the connected component of the $\QQ_p$-Zariski closure of the image and the image is open in there. 

Assuming the functoriality conjectures of Langlands, one can go through the arguments in the proof of the above and see what assumptions are needed on $\pi$ to prove such a result for $\GL_n$, i.e. when are extra-twists enough to give a precise description of the image. This should intuitively mean that $\pi$ is not coming from any smaller group via a Langlands transfer. We make this precise in Section \ref{Section-Automorphic} and define automorphic representations of general type and prove a big image theorem for the Galois representation associated to those, assuming Langlands functoriality. 

The conjectures of Clozel \cite{clozel1991representations}, predict the existence of a motive $M_{\pi}$ over $K$ (with coefficients in an extension of $\QQ(\pi)$) attached to $\pi$. The Mumford-Tate conjecture for this motive tells us that the groups $H_p$ should come from a global object $H$ defined over the field $F$ from Theorem \ref{main}. 
Assuming the existence of such a motive, we will use the action of extra-twists on the Hodge structure to construct a group $H_{\infty}$ over $F\otimes_{\QQ}\RR$ that should be the Archimedean part of the Mumford-Tate group. We will also use this action on the rational Hodge structure to construct a global group $H$.
This group will contain the  (special) Mumford-Tate group and in particular its dimension (which is equal to the dimension of all the groups $H_p$ from the Theorem \ref{main}) bounds the dimension of the Mumford-Tate group from above. 
\bigbreak
\noindent\textbf{Methods.} We will use the group of extra-twists of the automorphic representation $\pi$ to define a 1-cocycle which gives the form $H$ of $\mathrm{SL}_3$.
To prove the openness we need to compare the Lie algebras. We first twist away the determinant and only focus on the semi-simple part of the Lie algebra. We use automorphic base change and induction for degree 3 extensions and regularity of the Galois representation to show that the Galois representation is strongly irreducible and deduce that the Lie algebra is reductive. Then, following Ribet, we base change to the algebraic closure and use Goursat's lemma. 
To apply Goursat's lemma we need two things. First, we will use the classification of semi-simple Lie subalgebras of $\mathfrak{sl}_3$ and Langlands functoriality for $\mathrm{sym}^2:\GL_2 \rightarrow \GL_3$. In the $\GL_3$-case, being essentially $\mathrm{sym}^2$ and essentially self-dual are equivalent. This is important since it enables us to go back and forth between the Lie algebras and the groups. 
Second, we need a description of the field fixed by inner-twists and the field fixed by all extra-twists of $\pi$ in terms of the coefficients of the characteristic polynomial of the Galois representation at Frobenius elements, which is provided in Lemma \ref{cent}.
At the end we can twist back the determinant and give a complete description of the image of the Galois representation up to $p$-adic openness. 
\bigbreak
\noindent\textbf{The structure of this paper.}
In Section \ref{Section-Extra-Twists}, we define inner- and outer-twists of automorphic representations and Galois representations for $\GL_n$ and study their basic properties. We will also give the construction of the group $H$ from theorem \ref{main} in an abstract setting. 

In Section \ref{Section-Image of Galois}, we consider a general $n$-dimensional Galois representation with trivial determinant satisfying a set of natural properties and compute the $\QQ_p$-Lie algebra of its image, assuming that the $\overline{\QQ}_p$-Lie algebra is big. 

In Section \ref{Section-Automorphic}, we apply these results to Galois representations coming from certain automorphic representations and prove our main theorem. We also show that if one assumes enough conjectures from the Langlands program, then this could be generalized to a conjecture for the $\GL_n$-case. 

Finally, in Section \ref{Section-Mumford-Tate}, assuming the existence of a motive attached to $\pi$ we study the relation of the extra-twists with the Mumford-Tate group of this motive and propose some conjectures.
\bigbreak
\noindent\textbf{Acknowledgments.}
First, I want to thank my advisor Gebhard Böckle for suggesting that I work on this problem and his constant support during my PhD studies. I am furthermore grateful to Andrea Conti and Judith Ludwig for many helpful conversations. I would also like to thank Gaëtan Chenevier for suggesting Lemma \ref{Chenevier}, which simplified some of the arguments, and Chun Yin Hui for suggesting part \ref{RemarkValidMain} of Remark \ref{RemarkValid}, which resulted in removing a density one condition on an earlier version of this work. 
This research was supported by the Deutsche Forschungsgemeinschaft (DFG)
through the Collaborative Research Centre TRR 326 Geometry and Arithmetic of Uniformized Structures, project number 444845124.

\section{Extra-Twists for $\GL_n$}\label{Section-Extra-Twists}
In this section we define extra-twists for certain automorphic representation and Galois representations and then give a construction of a form of an algebraic group from a 1-cocycle.

Let $K$ be a number field and let $\pi$ be a cuspidal automorphic representation of $\GL_n(\AA_K)$ such that:
\begin{itemize}
    \item $\pi$ is not self-twist, i.e. there is no Hecke character $\chi\neq 1$ such that $\pi \simeq \pi \otimes \chi$.
    \item If $n>2$, $\pi$ is not essentially self-dual, i.e. there is no Hecke character $\eta$ such that $\pi \simeq \pi^{\vee} \otimes \eta$. 
\end{itemize}
These two conditions are analogues to the condition of a modular form being non-CM in the work of Ribet and Momose. It turns out that these conditions are enough in the $\GL_3$-case to have a big image theorem for the associated Galois representation as we will see later, but of course not in the $\GL_n$-case. 

Let $\QQ(\pi) \subset \CC$ be the Hecke (number) field of $\pi$ and let $E$ be a number field containing $\QQ(\pi)$. 
In what follows we will use strong multiplicity-one for cuspidal automorphic representations of $\GL_n$, many times without mentioning it. 

\subsection{Inner and outer twists}\label{DefTwist}

\begin{defi}\label{extra-def}
    An ($E$-)extra-twist of the automorphic representation $\pi$ is either of the following two:
    \begin{enumerate}
        \item (An inner-twist) A pair $(\sigma, \chi)$ where $\sigma \in \mathrm{Aut} (E)$ and $\chi:\AA_K^\times/K^\times \rightarrow \CC^\times$ is a Hecke character, such that ${}^\sigma \pi \cong \pi \otimes \chi$.
        \item (An outer-twist) A pair $(\tau, \eta)$ where $\tau \in \mathrm{Aut} (E)$ and $\eta:\AA_K^\times/K^\times \rightarrow \CC^\times$ is a Hecke character, such that ${}^\tau \pi \cong \pi^\vee \otimes \eta$. 
    \end{enumerate}
\end{defi}

\begin{rem} We make three remarks about this definition. 
    \begin{enumerate}
        \item[(a)] The role of $E$ might seem a bit auxiliary and one might think it should be enough to take  $E=\QQ(\pi)$. But, making this slightly more general definition will help us on the Galois side when dealing with issues regarding the field of definition of automorphic Galois representations. Also it will be more convenient in sections \ref{Section-Image of Galois} and \ref{Section-Automorphic} to assume that $E$ is Galois over $\QQ$. 
        Since we will always fix $E$ to begin with, we will usually drop it from the notation. 
        \item[(b)] Notice that the Galois action in the above definition is on the coefficients. In particular, do not confuse an outer-twist with an essential conjugate self-dual of an automorphic representation over a CM field (e.g. as in \cite{barnet2014potential}). 
        \item[(c)] For a general reductive group, there should be a class of extra-twists for every automorphism of a (fixed) based root datum. This would also make sense on the Galois side since the automorphism group of the dual root datum is clearly the same.  
    \end{enumerate}
\end{rem}

One can similarly define the notion of extra-twists for Galois representations. 
Let $E$ be a number field, $p$ a (rational) prime number and let $E_p = E \otimes_{\QQ} \QQ_p \cong \prod_{\fp|p} {E_\fp}$. Assume we have 
a continuous irreducible Galois representation:
$$
\prod_{\fp|p} \rho_{\fp} =
\rho : G_{K} \rightarrow \mathrm{GL}_n
({E_p})=\prod_{\fp|p} \mathrm{GL}_n
({E_\fp})
$$
unramified outside a finite set of places. We also assume that $\rho$ is neither self-twist nor essentially self dual in the $n>2$ case, i.e. it neither satisfies $\rho \simeq \rho \otimes \chi$ for a non-trivial Galois character $\chi$, nor $\rho \simeq \rho^\vee \otimes \eta$ for any Galois character $\eta$ in the $n>2$ case. 

\begin{defi}
An inner-twist of $\rho$ is a pair $(\sigma, \chi)$ where $\sigma \in \mathrm{Aut} (E)$ and $\chi:G_K \rightarrow {E_p}^\times$ is a (continuous) Galois character, such that ${}^\sigma \rho \cong \rho \otimes \chi$.
An outer-twist of $\rho$ is a pair $(\tau, \eta)$ where $\tau \in \mathrm{Aut} (E)$ and $\eta$ a Galois character, such that ${}^\tau \rho \cong \rho^\vee \otimes \eta$.
An extra-twist of $\rho$ is either an inner- or an outer-twist.
\end{defi} 
\begin{rem}
    Note that $\rho^\vee$ is just isomorphic to the representation $\rho^{-T}$ and hence has coefficients in $E_p$ (and not just $\overline{E}\otimes_{\QQ}\QQ_p$). This easily implies that the characters $\chi$ appearing in the extra-twists are forced to have values in $E_p^\times$ and we do not lose any generality by making this assumption in the definition.
\end{rem}

From now on, we assume that $K$ is totally real 
and $\pi$ is a regular algebraic cuspidal automorphic representation. Then it is known (\cite{harris2016rigid}, \cite{scholze2015torsion}) that there exists a compatible family of Galois representations $\rho_{\pi,p}$ associated with $\pi$. We will see in lemma \ref{Chenevier} that this compatible family can be defined over a coefficient field $E$ of finite degree and Galois over $\QQ$. Then we get a bijection between the set of $E$-extra-twists of $\pi$ and $E$-extra-twists of $\rho:=\rho_{\pi,p}$. Therefore, we usually identify the two.
\bigbreak
\noindent The most basic properties of the extra-twists of $\rho$ (or $\pi$) are summarized in the next lemma:

\begin{lemma}\label{BasicTwist} 
    Let $K$ be totally real and $\rho:G_K \rightarrow \GL_n(E_p)$ be a $p$-adic Galois representation that is neither self-twist nor essentially self-dual in the $n>2$ case. Then
    extra-twists of $\rho$ satisfy the following properties:
    \begin{enumerate}
        \item[(i)] For an extra-twist $(\sigma, \chi)$, the automorphism $\sigma$ uniquely determines the character $\chi$. 
        \item[(ii)] If $(\sigma, \chi)$ is an inner-twist and $(\tau, \eta)$ an outer-twist, then $\sigma \neq \tau$.
        \item[(iii)] Extra-twists form a group under the operation $(\sigma,\chi)\circ (\tau, \eta):=(\sigma\circ\tau, \chi \cdot {}^\sigma\eta)$.
        \item[(iv)] Inner-twists form a subgroup of the group of all extra-twists. If at least one outer-twist exists then this is an index 2 subgroup.
        \item[(v)] If $\rho$ is associated with an algebraic automorphic representation $\pi$ then
        for any inner-twist $(\sigma,\chi)$, the character $\chi$ is finite.
    \end{enumerate}
\end{lemma}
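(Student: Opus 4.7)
For (i) and (ii), the plan is to use the non-self-twist and (for $n>2$) non-essentially-self-dual hypotheses together with the irreducibility of $\rho$. If $(\sigma,\chi)$ and $(\sigma,\chi')$ were two inner-twists sharing the same $\sigma$, comparing ${}^\sigma\rho \cong \rho\otimes\chi$ with ${}^\sigma\rho \cong \rho\otimes\chi'$ would force $\rho \cong \rho \otimes (\chi'\chi^{-1})$, so non-self-twist forces $\chi=\chi'$. An analogous argument handles two outer-twists (using $\rho^\vee$ in place of $\rho$), and proves (ii): for $(\sigma,\chi)$ inner and $(\sigma,\eta)$ outer, combining the defining isomorphisms yields $\rho \cong \rho^\vee \otimes (\eta\chi^{-1})$, violating the non-essentially-self-dual assumption.

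For (iii), I would verify the group axioms by a direct case-by-case computation of ${}^{\sigma\tau}\rho$. The two key identities are
\[
{}^\sigma(\rho \otimes \eta) \cong {}^\sigma\rho \otimes {}^\sigma\eta \qquad\text{and}\qquad (\rho\otimes \chi)^\vee \cong \rho^\vee \otimes \chi^{-1}.
\]
Combined with the defining relations of each twist, these show that inner$\circ$inner and outer$\circ$outer land in the inner class while the two mixed compositions land in the outer class, with characters computable in each case from $\chi$, ${}^\sigma\eta$, and possibly their inverses. The identity is $(\mathrm{id},1)$, inverses are read off from the defining relations, and associativity reduces to ${}^\sigma({}^\tau\eta)={}^{\sigma\tau}\eta$.

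For (iv), the computations in (iii) immediately show that inner-twists are closed under composition and hence form a subgroup, and that composing inner with outer produces outer. Consequently, if any outer-twist $(\tau_0,\eta_0)$ exists, left multiplication by it realizes a bijection between inner-twists and outer-twists, so the inner subgroup has index $2$.

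For (v), let $(\sigma,\chi)$ be an inner-twist and take determinants of the relation ${}^\sigma\rho \cong \rho\otimes\chi$ to obtain
\[
\chi^n = {}^\sigma(\det \rho)\cdot (\det\rho)^{-1}.
\]
By algebraicity of $\pi$, the character $\det\rho_{\pi,p}$ is the product of an integer power of the $p$-adic cyclotomic character and a finite-order character. Since the cyclotomic character takes values in $\QQ_p^\times \subseteq E_p^\times$, it is fixed by the $\sigma$-action on $E_p$, so the right hand side has finite image. Therefore $\chi^n$, and hence $\chi$, has finite image. The main subtlety will be in (v), where one needs the standard description of $\det\rho_{\pi,p}$ coming from the automorphic-to-Galois construction; the remaining parts reduce to formal manipulations with irreducibility.
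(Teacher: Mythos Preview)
Your proposal is correct and matches the paper's proof essentially line for line: parts (i)--(ii) by comparing two twists and invoking the non-self-twist / non-essentially-self-dual hypotheses, part (iii) by the computation ${}^{\sigma\tau}\rho \cong {}^\sigma(\rho\otimes\eta) \cong \rho\otimes\chi\cdot{}^\sigma\eta$ (with the other cases handled analogously), part (iv) from the inner/outer parity under composition, and part (v) by taking determinants and cancelling the cyclotomic factor. One small sharpening for (v): the fact that $\det\rho_{\pi,p}=\epsilon_p^m\omega$ with $\omega$ of finite order uses not just algebraicity of $\pi$ but crucially that $K$ is totally real (so that algebraic Hecke characters of $K$ are of this special form); the paper makes this explicit, and you should too.
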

\begin{proof}
    First assume that $(\sigma, \chi_1)$ and $(\sigma, \chi_2)$ are both inner-twists. Then $\rho \simeq \rho \otimes \chi_1 \chi_2 ^{-1}$ which implies $\chi_1=\chi_2$ by our assumptions on $\rho$. A similar argument proves the other cases of \textit{(i)} and also \textit{(ii)}. For part \textit{(iii)} let assume that both 
    $(\sigma,\chi)$ and $(\tau, \eta)$ are inner-twists. Then 
    $$
    {}^\sigma({}^\tau \rho) \cong {}^\sigma (\rho\otimes \eta) = {}^\sigma\rho \otimes{}^\sigma\eta \cong \rho \otimes \chi \otimes {}^\sigma\eta
    $$
    The other cases are similar.
    For part \textit{(iv)} one just needs to notice that the product of two outer-twists is clearly an inner-twist. Finally, to see \textit{(v)}, note that since $K$ is totally real, the central character of $\pi$ must be of the form $|\cdot|^m\omega$ for some $m\in \ZZ$ and finite order character $\omega$.
    Hence $\det(\rho)=\epsilon_p^m \omega$ where $\epsilon_p$ is the $p$-adic cyclotomic character and we are viewing $\omega$ as a finite Galois character. Now, taking the determinant of both sides of
    $
    {}^\sigma\rho \cong \rho \otimes \chi
    $
    we get 
    $$\chi^n = \frac{{}^\sigma \det(\rho)}{\det(\rho)}=\frac{{}^\sigma \omega}{\omega} $$  
    which implies that $\chi^n$ (and hence $\chi$) is a finite order character. 
\end{proof}

We will denote the group of all extra-twists of a Galois representation (or an automorphic representation) by $\Gamma$ and the subgroup of inner-twists by $\Gi$ and the set of outer-twists by $\Go$, if there is any.
The last lemma shows that we can identify $\Gamma$ with a subgroup of $\mathrm{Aut}(E)$ by forgetting the character and we will do so from now on. 
Let $F:=E^{\Gamma}$ be the field fixed by all the extra-twists and $\Fi:=E^{\Gi}$ be the field fixed by the inner-twists. In particular, $\Gamma = \mathrm{Gal}(E/F)$ and $\Gi = \mathrm{Gal}(E/\Fi)$. If there is at least one outer-twist then $[\Fi:F]=2$, otherwise $F=\Fi$.

\subsection{Constructing forms of algebraic groups using 1-cocycles}\label{form}

In this section we explain the abstract construction of a form of an algebraic group from a $1$-cocycle of a Galois action of the automorphisms of the group. This will be used later for the group $\mathrm{SL}_n$ and a cocycle that comes from the group of extra-twists of a Galois representation. 

Let $E/F$ either be a finite Galois extension of fields or the semi-local Galois extension $E_p/F_p = (E/F) \otimes_{\QQ} \QQ_p$  for a finite Galois extension $E=F(\alpha)/F$ of number fields and let $\Phi$ be the minimal polynomial of $\alpha$. Let $G/E$ be an algebraic group and let $\Gamma = \mathrm{Gal}(E/F)$. Let $f:\Gamma \rightarrow \mathrm{Aut}_E(G)$ be a 1-cocycle and write $f_{\sigma}$ for the image of $\sigma$. We will construct a form of $G$ defined over $F$ using this cocycle.

Let $G' = \mathrm{Res}^E_{F}G$, so for every $F$-algebra $R$ we have 
$$G'(R)=G(E\otimes_{F}R)$$
Therefore, $G'(R)$ is equipped with an action of $\Gamma$
(where it acts on the first component of the tensor product)
which is 
clearly functorial. Hence the collection of morphisms
$\sigma:G'(R)\rightarrow G'(R)$ is a natural transformation
and so it is induced from a morphism $\sigma:G'\rightarrow G'$
of algebraic groups. Therefore, we can define $H$ to be the
subgroup of $G'$ satisfying $f_{\sigma}({}^\sigma g) =g$ for every $\sigma \in \Gamma$.
In other words $H=(G')^{tw_f(\Gamma)}$ where we define the $f$-twisted action of $\sigma$
on $G'$ to be given by ${}^{tw_f(\sigma)} g=f_{\sigma}({}^\sigma g)$.
This gives a closed subgroup of $G'$.

By base changing $H\subseteq \mathrm{Res}^E _{F}G$ to $E$ 
and then projecting to the id-component one gets
$$
H_E \hookrightarrow (\mathrm{Res}^E _{F}G)_E =
\prod_\Gamma G_E \xrightarrow{\pi_{\mathrm{id}}} G
$$
We prove that this is an isomorphsim by checking this on points. 
First, we need to give a description of the algebraic action of
$\Gamma$ on $\prod_\Gamma G$ via the identification 
$(\mathrm{Res}^E _{F}G)_E =
\prod_\Gamma G$. Let $R$ be an $E$-algebra. Note that
$\mathrm{Res}^E _{F}G(R) =
G(E\otimes_{F}R)$ and the algebraic action of $\Gamma$ is
just the action on the $E$ component. Now
$$
G(E\otimes_{F}R)=
G(E\otimes_{F} E \otimes_{E}R)
$$
and the action is only on the first $E$ component. Then
$$
G(E\otimes_{F}R)=
G(E\otimes_{F} E \otimes_{E}R)=
G\left(E\otimes_{F} \frac{F [x]}{\Phi(x)} \otimes_{E}R\right) =
G\left(\frac{E[x]}{\Phi(x)} \otimes_{E}R\right) 
$$
where the action is only on the coefficients of the first component.
So
$$
=G\left(\frac{E[x]}{\prod_{\Gamma}(x-\sigma\alpha)} \otimes_{E}R\right) 
=G\left(\left(\prod_{\Gamma}E\right) \otimes_{E}R\right) 
=G\left(\prod_{\Gamma}R\right)
=\prod_{\Gamma}G(R) 
$$
where the action of $\gamma \in \Gamma$ is given by 
$$
(a_\sigma)_\sigma \mapsto (\gamma a_{\gamma^{-1}\sigma})_\sigma
$$

\begin{prop}\label{GroupForm}
With the notation as above, for any $E$-algebra $R$ the map
$$
H(R) \hookrightarrow \mathrm{Res}^E _{F}G(R) =
\prod_\Gamma G(R) \rightarrow G(R)
$$
in an isomorphism of groups, so the algebraic 
group $H$ is a form of $G$.
\end{prop}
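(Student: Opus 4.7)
The proposition is the concrete incarnation of Galois descent in the present setup: a cocycle $f$ is descent data on $G$, and the $F$-form $H$ it cuts out becomes canonically identified with $G$ after base change to $E$. My plan is to unwind this directly in the product coordinates $\prod_\Gamma G(R)$ from the preceding paragraph. First I note that the composition $H(R) \hookrightarrow \prod_\Gamma G(R) \to G(R)$ is automatically a group homomorphism (the inclusion and the projection both are), so it suffices to check the underlying map of sets is a bijection.

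For injectivity, suppose $(a_\sigma)_\sigma \in H(R)$. By definition of $H$, the tuple is fixed by the twisted action of every $\gamma \in \Gamma$; combining the Galois formula ${}^\gamma(a_\sigma)_\sigma = (\gamma a_{\gamma^{-1}\sigma})_\sigma$ with $f_\gamma$, the fixed-point condition reads
$$
a_\sigma = f_\gamma\bigl(\gamma\, a_{\gamma^{-1}\sigma}\bigr) \qquad \text{for all } \sigma, \gamma \in \Gamma.
$$
Specializing $\sigma = \gamma$ gives $a_\gamma = f_\gamma(\gamma\, a_{\mathrm{id}})$, so every entry of the tuple is determined by $a_{\mathrm{id}}$, which yields injectivity.

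For surjectivity, given $b \in G(R)$, I set $a_\gamma := f_\gamma(\gamma b)$ (with $a_{\mathrm{id}} = b$ by the normalization $f_{\mathrm{id}} = \mathrm{id}$, which follows from the cocycle relation applied to $\mathrm{id}\cdot\mathrm{id}$), and check that the resulting tuple $(a_\sigma)_\sigma$ satisfies the fixed-point identity $a_{\gamma\tau} = f_\gamma(\gamma a_\tau)$ for all $\gamma,\tau \in \Gamma$. This unfolds to
$$
f_{\gamma\tau}(\gamma\tau b) \;=\; f_\gamma\bigl({}^\gamma f_\tau(\gamma\tau b)\bigr) \;=\; f_\gamma\bigl(\gamma f_\tau(\tau b)\bigr) \;=\; f_\gamma(\gamma a_\tau),
$$
which is exactly the 1-cocycle identity $f_{\gamma\tau} = f_\gamma \cdot {}^\gamma f_\tau$ in $\mathrm{Aut}_E(G)$ evaluated at the element $\gamma\tau b$. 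Hence the projection is a bijection, and therefore a group isomorphism, showing that $H$ is a form of $G$.

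The only real obstacle is bookkeeping: keeping the conventions for the Galois permutation of $\prod_\Gamma G(R)$, for the way $f_\gamma$ acts on tuples, and for the side on which $\gamma$ conjugates $f_\tau$ in the cocycle relation all mutually consistent. Once these are aligned, each half of the proof reduces to a single application of either the fixed-point formula or the 1-cocycle identity, as above; conceptually the statement is almost a tautology, since cocycles are precisely those $f$ for which the twisted $\Gamma$-action is again an action.
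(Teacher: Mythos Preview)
Your proof is correct and follows essentially the same route as the paper's: you establish injectivity by specializing the twisted-invariance equation to $\sigma=\gamma$ to see that $a_{\mathrm{id}}$ determines the whole tuple, and surjectivity by checking that the candidate $(f_\gamma(\gamma b))_\gamma$ is twisted-invariant via the cocycle identity $f_{\gamma\tau}=f_\gamma\circ{}^\gamma f_\tau$. The only differences are cosmetic (you reindex the invariance condition as $a_{\gamma\tau}=f_\gamma(\gamma a_\tau)$ and add the remark that the map is automatically a group homomorphism), but the underlying computation is identical.
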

\begin{proof}
By definition, $H(R)$ is the subgroup of the elements invariant
under the twisted action of $\Gamma$. Let
$(g_\sigma)_\sigma \in \prod G(R)$ be invariant under the twisted
action:
$$
{}^{tw_f(\gamma)}(g_\sigma)_\sigma = 
(f_{\gamma}({}^\gamma g_{\gamma^{-1}\sigma}))_\sigma
$$ 
By looking at the component $\sigma = \gamma$ we get
$$
g_\gamma = f_{\gamma}({}^\gamma g_1)
$$
So the $g_1$ component determines all other $g_\gamma$'s.
This shows that the map 
$$
H(R) \hookrightarrow 
\prod_\Gamma G(R) \xrightarrow{\pi_{id}} G(R)
$$
is injective. 

To prove the surjectivety, we need to show that for every
$g_1 \in G(R)$ the element $(f_{\sigma}({}^\sigma g_1))_\sigma$
is invariant under the twisted action:
$$
{}^{tw_f(\gamma)}(f_{\sigma}({}^\sigma g_1))_\sigma
= (f_{\gamma}{}(^\gamma (f_{\gamma^{-1}\sigma}({}^{\gamma^{-1}\sigma} g_1))))_\sigma
= ((f_{\gamma} \circ {}^\gamma f_{\gamma^{-1}\sigma})({}^{\sigma} g_1))_\sigma
$$
Now, by the cocycle condition
$$
f_{\gamma} \circ {}^\gamma f_{\gamma^{-1}\sigma} = f_\sigma
$$
therefore
$$
{}^{tw(\gamma)}(f_{\sigma}({}^\sigma g_1))_\sigma =
(f_{\sigma}({}^\sigma g_1))_\sigma
$$
which is exactly what we needed to prove.
So the map is an isomorphism for all the $R$-points and hence
an isomorphism of affine algebraic groups over $E$.
\end{proof}
We can generalize this theorem to understand the behavior of $H$ under any Galois base change of $F$. 
\begin{cor}\label{formBaseChange}
    Let $F\subset F_0 \subset E$ be an intermediate field that is Galois over $F$ and let $\Gamma_0=\mathrm{Gal}(E/F_0)$ and $f_0:\Gamma_0 \rightarrow \mathrm{Aut}_E(G)$ be the restriction of $f$ to $\Gamma_0$. Then 
    $$
    H\times_{F}F_0 \simeq (\mathrm{Res}^E_{F_0} G)^{tw_{f_0}(\Gamma_0)}
    $$
\end{cor}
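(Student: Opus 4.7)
The plan is to verify the claimed isomorphism functorially on $R$-points for an arbitrary $F_0$-algebra $R$, reducing the problem to the setup of Proposition \ref{GroupForm} applied one level deeper. Writing $T = \mathrm{Gal}(F_0/F) = \Gamma/\Gamma_0$, the starting identity is the decomposition of $E$-algebras
$$E \otimes_F F_0 \;\cong\; \prod_{t \in T} E, \qquad e \otimes x \;\mapsto\; (e\cdot t(x))_{t\in T},$$
which, after tensoring with $R$ over $F_0$, gives
$$G(E \otimes_F R) \;\cong\; \prod_{t\in T} G(E \otimes_{F_0} R).$$
This is the analogue, for $F_0$-algebras, of the decomposition $(\mathrm{Res}^E_F G)_E \cong \prod_\Gamma G_E$ already exploited in the proof of Proposition \ref{GroupForm}.

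Next, I would compute how the untwisted $\Gamma$-action on $G(E \otimes_F R)$ transports under this identification. A direct check shows that $(\gamma \cdot g)_t = \gamma(g_{\bar\gamma^{-1}t})$, where $\bar\gamma \in T$ is the image of $\gamma$, so the $f$-twisted action becomes $(tw_f(\gamma)\cdot g)_t = f_\gamma(\gamma(g_{\bar\gamma^{-1}t}))$. Restricted to $\gamma \in \Gamma_0$ (for which $\bar\gamma = 1$), this is precisely the $f_0$-twisted $\Gamma_0$-action on each factor; for $\gamma \notin \Gamma_0$ the permutation of factors by $\bar\gamma$ enters as well.

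To conclude, one argues exactly as in the proof of Proposition \ref{GroupForm}. Fixing lifts $\tilde t \in \Gamma$ of each $t \in T$, invariance under $\tilde t$ forces $g_t = f_{\tilde t}(\tilde t(g_1))$, so an invariant element is determined by its $t=1$ component; invariance under the remaining $\Gamma_0$ then says exactly that $g_1 \in G(E \otimes_{F_0} R)$ is fixed by the $f_0$-twisted $\Gamma_0$-action. Conversely, given such a $g_1$, the formula $g_t := f_{\tilde t}(\tilde t(g_1))$ produces a tuple fixed by the full twisted $\Gamma$-action, and projection onto the $t=1$ factor yields the desired isomorphism $(H \times_F F_0)(R) \xrightarrow{\sim} (\mathrm{Res}^E_{F_0} G)^{tw_{f_0}(\Gamma_0)}(R)$. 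The main obstacle is bookkeeping: one must check that this extension of $g_1$ is independent of the choice of lifts $\tilde t$ and is in fact fixed by every $\gamma \in \Gamma$, and both checks reduce to the cocycle identity $f_{\gamma_1\gamma_2} = f_{\gamma_1} \circ {}^{\gamma_1}f_{\gamma_2}$, mirroring the surjectivity step of Proposition \ref{GroupForm}.
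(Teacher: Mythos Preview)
Your argument is correct and is essentially the same as the paper's, only spelled out in full. The paper's proof is a single sentence: apply Proposition~\ref{GroupForm} to the group $H_0:=(\mathrm{Res}^E_{F_0}G)^{tw_{f_0}(\Gamma_0)}$ with the residual twisted action of $T=\Gamma/\Gamma_0$; your decomposition $G(E\otimes_F R)\cong\prod_{t\in T}G(E\otimes_{F_0}R)$ together with the injectivity/surjectivity via the $t=1$ component is precisely what that application amounts to, and the cocycle checks you flag are exactly the verification that the induced $T$-action is again of the form covered by the proposition.
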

\begin{proof}
    Apply proposition \ref{GroupForm} to the group $(\mathrm{Res}^E_{F_0} G)^{tw_{f_0}(\Gamma_0)}$ for the  twisted action of the group $\Gamma/\Gamma_0$. 
\end{proof}

\section{Image of Galois Representations with Extra-Twists}\label{Section-Image of Galois}

The extra-twists of an algebraic automorphic representation induce extra-twists on the associated Galois representation. In this section we fix a $p$-adic Galois representation satisfying a list of natural properties (including the property that the $\overline{\QQ}_p$-Lie algebra of the image is big) and use these extra-twists to determine the $\QQ_p$-Lie algebra of the image of this Galois representation. In the next section, we will apply the results of this section to Galois representation associated to certain automorphic representations. We will need to use some automorphic input to show that in the $3$-dimensional case this list of properties is satisfied.

Let $K$ be a number field as usual and $G_K$ its absolute Galois group. 
Let $E$ be another number field that is assumed to be Galois over $\QQ$ and let $E_p = E \otimes_{\QQ} \QQ_p \cong \prod_{\fp | p} E_{\fp}$. Assume that for each finite place $\fp | p$ of $E$, we have a continuous semi-simple Galois representation 
$\rho_{\fp}:G_{K} \rightarrow \GL_n(E_\fp)$. It is usually more convenient to work with the product of all these Galois representations:
$$
\prod_{\fp | p} \rho_{\fp} =
\rho_{p} : G_{K} \rightarrow \mathrm{GL}_n
(E_p)=\prod_{\fp | p} \mathrm{GL}_n
(E_\fp)
$$
or equivalently a free $E_p$-module $V_p$ of rank $n$, with a continuous Galois action on it  and for each $\fp | p$
an $n$-dimensional vector space $V_\fp = V_p \otimes_{E_p} E_{\fp}$ over $E_\fp$
with a continuous Galois action such that
$V_p = \bigoplus_{\fp | p} V_{\fp}$ as $\QQ_{p}$-vector spaces. 

Each embedding $\lambda: E \hookrightarrow \overline{\QQ}_p$ induces an absolute value and hence gives a finite place $\fp$ of $E$ above $p$.
Therefore, $\lambda$ extends to an embedding $\lambda: E_{\fp} \hookrightarrow \overline{\QQ}_p$ by continuity. Now we define
$$
V_{\lambda}:= V_p \otimes_{E_p,\lambda \otimes id} \overline{\QQ}_p
= V_{\fp} \otimes_{E_{\fp},\lambda} \overline{\QQ}_p
$$
that is an $n$-dimensional vector space over 
$\overline{\QQ}_p$ with a continuous Galois action. We denote this representation by $\rho_{\lambda}$. Note that $\rho_\lambda:G_K\rightarrow \GL_n(\overline{\QQ}_p)$ is essentially the same object as $\rho_{\fp}:G_K \rightarrow \GL_n(E_\fp)$, it is just considered with coefficients in $\overline{\QQ}_p$ instead of $E_{\fp}$ via $\lambda$. 

Now, we need to make the following list of natural assumptions of our Galois representations to be able to compute the Lie algebra of the image in the next subsection. These properties are expected to hold for Galois representations attached to regular cuspidal algebraic automorphic representations of general type, after possibly some finite base change and twist by a character. 

\begin{defi}\label{valid}
    Keeping the above notations, the Galois representation $\rho_p:G_K \rightarrow \GL_n(E_p)$ is called \textbf{valid} if
    \begin{itemize}
    \item Each $\rho_\lambda$ is continuous and unramified outside a finite set $S$ of places of $K$ containing the Archimedean places and all places above $p$.
    \item $f^{(p)}_v(x):=\mathrm{CharPoly}(\rho_p(\mathrm{Frob}_v))$ has coefficients in $E$, for each place $v \notin S$. 
    %\item For each $\lambda$, $E$ is generated by $\mathrm{Tr}(\rho_\lambda(\mathrm{Frob}_v))$ (equivalentely, by $\mathrm{Tr}(\rho_p)$).
    \item Each $\rho_{\lambda}$ is neither self-twist nor essentially self-dual for $n>2$. 
    %\item Each $\rho_{\lambda}$ is strongly irreducible, i.e. the restriction of $\rho_\lambda$ to $G_L$, the absolute Galois group of $L$, is irreducible for any finite extension $L/K$.
    \item $\det(\rho_p)$ is trivial.
    \item For each $\lambda$ the $\overline{\QQ}_p$-Lie algebra of the $p$-adic Lie group $\rho_{\lambda}(G_K)$ is equal to $\mathfrak{sl}_n(\overline{\QQ}_p)$.  
\end{itemize}
\end{defi}

\begin{rem}\label{RemarkValid} We make the following four remarks about this definition: 
\begin{enumerate}
    \item Note that by the last condition,
    each $\rho_{\lambda}$ is strongly irreducible, i.e. the restriction of $\rho_\lambda$ to $G_L$, the absolute Galois group of $L$, is irreducible for any finite extension $L/K$. This is simply because going to a finite extension does not affect the Lie-algebra. In practice, we will usually need to prove this first in order to show that a Galois representation is valid. 
    \item Note that for $n=2$ we are not excluding essential self-duality but we are excluding being self-twist. 
    \item The condition on the determinant is not very restrictive because we can trivialize the determinant after a finite extension of $K$ and a twist.
    Since our first goal is to compute the semisimple part of the Lie algebra of the image, it doesn't change anything if we restrict to an open subgroup and also twist with a character. 
    \item \label{RemarkValidMain} In the case where $\rho_\lambda$'s come from a compatible family of semi-simple Galois representations, it is enough to check the last condition at only one $\lambda$. More precisely, by Theorem 3.19 and Remark 3.22 of \cite{hui2013monodromy}, the semi-simple rank and the formal character of the tautological representations of the algebraic monodromy group are independent of $\lambda$. Then \cite[Theorem 4]{larsen1990determining} implies that this uniquely determines the Lie algebra in the type $A_n$ case hence if we have $\mathfrak{sl}_n$ at one place $\lambda$, we should have $\mathfrak{sl}_n$ at every place. 
\end{enumerate}
\end{rem}

\subsection{Extra-twists and Galois representations}
From now on we assume that $\rho_p$ is a valid Galois representation. There are two cases that we have to deal with. $\rho_p$ either has an outer-twist or it does not. Notice that in the $n=2$ case we only have inner-twists since every representation is essentially self-dual. 
We try to deal with both cases at the same time but at some places it is easier to make a distinction. 
We need a few lemmas:

\begin{lemma}
    For every extra-twist $(\sigma, \chi_\sigma)$ of $\rho_p$, one has that $\chi_\sigma$ is a finite character.
\end{lemma}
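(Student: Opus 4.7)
The plan is to reduce the statement to a direct determinant computation, exploiting the validity assumption that $\det(\rho_p)$ is trivial. Notice that unlike Lemma \ref{BasicTwist}(v), here we do not need to go through the cyclotomic character or an automorphic input; the hypothesis $\det(\rho_p)=1$ makes the argument essentially immediate.

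First, I would fix an extra-twist $(\sigma,\chi_\sigma)$ and split into the inner and outer cases. In the inner case we have ${}^\sigma\rho_p \cong \rho_p \otimes \chi_\sigma$; taking determinants and using $\det(\rho_p)=1$ gives
\[
1 \;=\; {}^\sigma\!\det(\rho_p) \;=\; \det({}^\sigma\rho_p) \;=\; \det(\rho_p)\cdot\chi_\sigma^n \;=\; \chi_\sigma^n.
\]
In the outer case ${}^\sigma\rho_p \cong \rho_p^\vee \otimes \chi_\sigma$, so taking determinants yields $1 = \det(\rho_p)^{-1}\cdot\chi_\sigma^n = \chi_\sigma^n$ for the same reason. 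In both cases $\chi_\sigma^n=1$.

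The second (and only substantive) step is to deduce from $\chi_\sigma^n=1$ that $\chi_\sigma$ actually has finite image. By the remark right after the definition of extra-twists of Galois representations, $\chi_\sigma$ takes values in $E_p^\times = \prod_{\fp\mid p} E_\fp^\times$. The subgroup of $n$-th roots of unity in this product is the product of the $n$-th roots of unity in each local factor $E_\fp^\times$, which is finite. Hence $\chi_\sigma$ factors through this finite group and is therefore a finite order character. There is no real obstacle to this argument; the only point that requires the validity hypotheses is the vanishing of the determinant, which is explicitly imposed in Definition \ref{valid}.
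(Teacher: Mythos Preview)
Your proof is correct and follows essentially the same approach as the paper: take determinants of the defining isomorphism and use $\det(\rho_p)=1$ to conclude $\chi_\sigma^n=1$. The paper handles the inner case by a back-reference to Lemma~\ref{BasicTwist}(v), whereas you treat both cases uniformly and directly from the validity hypothesis; your version is slightly cleaner in that it avoids any automorphic input, and your extra remark that $\mu_n(E_p^\times)$ is finite makes explicit the step the paper leaves implicit.
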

\begin{proof}
    This is always true for inner-twists as we saw in lemma \ref{BasicTwist}. Let $(\sigma, \chi_\sigma)$ be an outer-twist. Then
    $$
    {}^\sigma \rho_p \cong \rho_p^\vee \otimes \chi_\sigma
    $$
    Looking at the determinants of both side we see that $\chi_\sigma ^n = 1$ which implies the result. 
\end{proof}

\begin{lemma}\label{charTwist}
    Let $L$ be a finite Galois extension of $K$ and let $V_1$ and $V_2$ be $p$-adic finite dimensional $G_{K}$-representations such that the restriction of $V_2$ to $G_L$ is absolutely irreducible. If $V_1 \simeq V_2$ as representations of $G_L$, then $V_1  \simeq V_2 \otimes \phi$ as representations of $G_K$ for some character $\phi$ of $G_K$.
\end{lemma}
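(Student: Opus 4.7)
The plan is to find the twisting character by a Schur's lemma argument applied to the space of $G_L$-equivariant homomorphisms from $V_1$ to $V_2$, on which $G_K/G_L$ will act through a character.

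Concretely, by hypothesis there exists a $G_L$-equivariant isomorphism $T_0:V_1\to V_2$, so the space $W:=\mathrm{Hom}_{G_L}(V_1,V_2)$ is non-zero. Since $V_2|_{G_L}$ is absolutely irreducible and $V_1|_{G_L}\cong V_2|_{G_L}$, composing with $T_0^{-1}$ identifies $W$ with $\mathrm{End}_{G_L}(V_2)$, and Schur's lemma gives $\dim_{\overline{\QQ}_p} W=1$. So $W=\overline{\QQ}_p\cdot T_0$.

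Next I would endow $\mathrm{Hom}(V_1,V_2)$ with the natural $G_K$-action $g\cdot S:=\rho_2(g)\circ S\circ \rho_1(g)^{-1}$ and verify that it preserves $W$. This is exactly where the Galois hypothesis on $L/K$ enters: for $g\in G_K$ and $h\in G_L$ one has $g^{-1}hg\in G_L$, and a direct computation
\[
(g\cdot S)\circ\rho_1(h)=\rho_2(g)\,S\,\rho_1(g^{-1}hg)\,\rho_1(g^{-1})=\rho_2(h)\circ(g\cdot S)
\]
shows that $g\cdot S\in W$ whenever $S\in W$. Since $W$ is $1$-dimensional, this action is given by a character $\phi:G_K\to\overline{\QQ}_p^{\times}$, defined by $g\cdot T_0=\phi(g)^{-1}T_0$, and continuity of $\phi$ is inherited from that of $\rho_1,\rho_2$.

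Finally I would unwind the definition: the relation $\rho_2(g)\,T_0\,\rho_1(g)^{-1}=\phi(g)^{-1}T_0$ rearranges to $T_0^{-1}\rho_2(g)T_0=\phi(g)^{-1}\rho_1(g)$, which says precisely that $T_0$ is a $G_K$-isomorphism $V_1\xrightarrow{\sim} V_2\otimes\phi$. There is no real obstacle here beyond bookkeeping; the only slightly delicate point is the check that the $G_K$-action preserves $W$, which pins down the need for $L/K$ to be Galois.
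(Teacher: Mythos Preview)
Your argument is correct and is essentially the same as the paper's: both exploit Schur's lemma together with the normality of $G_L$ in $G_K$. The paper picks bases so that $\rho_1|_{G_L}=\rho_2|_{G_L}$, defines $\phi(g):=\rho_1(g)^{-1}\rho_2(g)$ directly as a matrix, and checks by the same conjugation computation that $\phi(g)$ commutes with $\rho_2(G_L)$, hence is scalar; your version packages this as the $G_K$-action on the one-dimensional space $\mathrm{Hom}_{G_L}(V_1,V_2)$, which is a cleaner formulation of the identical idea.
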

\begin{proof}
    we can choose two bases for $V_1$
    and $V_2$ such that the representations
    $\rho_1:G_K \rightarrow \GL_n(\overline{\QQ_{p}})$
    and
    $\rho_{2}:G_K \rightarrow \GL_n(\overline{\QQ_{p}})$
    associated with $V_1$ and $V_2$
    are isomorphic when restricted to $G_L$. Now define
    $$
    \phi (g) := \rho_{1}^{-1}(g)\rho_{2}(g)
    $$
    A priori $\phi$ is just a map
    $\phi:G_K \rightarrow \GL_n(\overline{\QQ_{p}})$ which
    is trivial on $G_L$. We want to prove that it
    is actually a homomorphism with values
    in the center (hence actually a character) on all of $G_K$. 

    Let $g\in G_K$ and $h \in G_L$. 
    Note that
    $\rho_{1} (h) = \rho_{2} (h)$
    and 
     $\rho_{1} (ghg^{-1}) = \rho_{2} (ghg^{-1})$
     since $G_L$ is normal in $G_K$.
     Now the following computation shows that 
     $\phi (g)=\rho_{1}^{-1}(g)\rho_{2}(g)$
     commutes with $\rho_{2} (h)$:
     $$
     \rho_{1}^{-1}(g)\rho_{2}(g)\rho_{2} (h)
     = \rho_{1}^{-1}(g)\rho_{2}(gh) =
     \rho_{1}^{-1}(g)
     \rho_{2}(ghg^{-1})\rho_{2}(g)
     $$
     $$
     =\rho_{1}(g^{-1})
     \rho_{1}(ghg^{-1})\rho_{2}(g)
     =\rho_{1} (h)\rho_{1}^{-1}(g)\rho_{2}(g)
     = \rho_{2} (h)\rho_{1}^{-1}(g)\rho_{2}(g)
     $$
     Now since $V_2$ is absolutely irreducible when restricted to $G_L$, we have
     $\mathrm{End}_{G_L}(V_2)=\overline{\QQ_{p}}$ and
    we are done. 
\end{proof}

Now fix a valid Galois representation $\rho_p$. Note that by the last assumption in definition \ref{valid}, it makes sense to consider the group $\Gamma \subseteq \mathrm{Aut}(E)$ of all the extra-twists of $\rho_p$. Let $\Gi$, $F = E^\Gamma$, and $\Fi = E^\Gi$ be as in section \ref{DefTwist}. By lemma \ref{BasicTwist}, the character $\chi$ in an extra-twist $(\sigma,\chi)$ is uniquely determined by $\sigma$ so we use the notation $\chi_\sigma$ for this character.

We assumed in definition \ref{valid} that we know the $\overline{\QQ}_p$-Lie-algebra of the image. The next two lemmas are our main tool to compute the ${\QQ}_p$-Lie-algebra. The next lemma is the only place that we will use the assumption that $E/\QQ$ is Galois.

\begin{lemma}\label{cent}
    Let $G_L=\cap_{\sigma \in \Gamma} \ker (\chi_{\sigma})$ and let $L'/L$ be a finite extension that is Galois over $K$. For every finite unramified place $v$ of $L'$ let $a_v, b_v \in E$ such that $f_v(x):=\mathrm{charPoly}(\rho_p(\mathrm{Fr}_v)) = X^n-a_vX^{n-1}+ \cdots + (-1)^{n-1} b_vX+(-1)^n$. Then, $F^{inn}=\QQ(\{a_v\}_v)$ and $F=\QQ(\{a_v + b_v\}_v)$.
\end{lemma}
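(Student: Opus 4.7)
My plan is to identify $\Gi$ and $\Gamma$ with the subgroups of $\mathrm{Gal}(E/\QQ)$ fixing $\QQ(\{a_v\}_v)$ and $\QQ(\{a_v+b_v\}_v)$ respectively, then take fixed fields (which is valid since $E/\QQ$ is Galois). The starting observation is that, because $\det \rho_p = 1$ by validity, we have $a_v = \mathrm{tr}(\rho_p(\mathrm{Fr}_v))$ and $b_v = \mathrm{tr}(\rho_p(\mathrm{Fr}_v)^{-1}) = \mathrm{tr}(\rho_p^\vee(\mathrm{Fr}_v))$. Moreover, $v$ unramified in $L'$ forces $\mathrm{Fr}_v \in G_{L'} \subseteq G_L = \bigcap_{\sigma \in \Gamma} \ker(\chi_\sigma)$, so every twist character $\chi_\sigma$ with $\sigma \in \Gamma$ is trivial at such a Frobenius.

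First I would dispatch the easy containments by taking traces at $\mathrm{Fr}_v$ in the defining relations. If $\sigma \in \Gi$, then ${}^\sigma \rho_p \simeq \rho_p \otimes \chi_\sigma$ yields $\sigma(a_v) = \chi_\sigma(\mathrm{Fr}_v)\, a_v = a_v$, and similarly $\sigma(b_v) = b_v$. If $\sigma \in \Go$, then ${}^\sigma \rho_p \simeq \rho_p^\vee \otimes \chi_\sigma$ yields $\sigma(a_v) = b_v$ and $\sigma(b_v) = a_v$. Either way $\sigma$ fixes $a_v+b_v$, and inner-twists additionally fix $a_v$, giving $\QQ(\{a_v\}_v) \subseteq F^{\mathrm{inn}}$ and $\QQ(\{a_v+b_v\}_v) \subseteq F$.

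For the reverse containments the main tools are Chebotarev density and Lemma \ref{charTwist}. Suppose $\sigma \in \mathrm{Gal}(E/\QQ)$ fixes every $a_v$ for $v$ unramified in $L'$. By Chebotarev in the Galois extension $L'/K$, together with continuity of the trace function, the identity $\mathrm{tr}({}^\sigma\rho_p(g)) = \mathrm{tr}(\rho_p(g))$ extends to all $g \in G_{L'}$. Semi-simplicity combined with strong irreducibility of $\rho_p|_{G_{L'}}$ (a consequence of the $\mathfrak{sl}_n$-condition in Definition \ref{valid}) then forces ${}^\sigma\rho_p \simeq \rho_p$ as $G_{L'}$-representations, and Lemma \ref{charTwist} upgrades this to ${}^\sigma\rho_p \simeq \rho_p \otimes \phi$ as $G_K$-representations, so $\sigma \in \Gi$. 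Analogously, if $\sigma$ fixes every $a_v+b_v$, applying the same Chebotarev argument to the class function $g \mapsto \mathrm{tr}({}^\sigma\rho_p(g)) + \mathrm{tr}({}^\sigma\rho_p^\vee(g))$ yields an isomorphism
\[
{}^\sigma\rho_p \oplus ({}^\sigma\rho_p)^\vee \;\simeq\; \rho_p \oplus \rho_p^\vee
\]
of semi-simple $G_{L'}$-representations. For $n > 2$, validity together with Lemma \ref{charTwist} rules out $\rho_p|_{G_{L'}} \simeq \rho_p^\vee|_{G_{L'}}$, so the right-hand side is a direct sum of two non-isomorphic absolutely irreducibles. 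Krull--Schmidt then forces ${}^\sigma\rho_p|_{G_{L'}}$ to be isomorphic to exactly one of $\rho_p|_{G_{L'}}$ or $\rho_p^\vee|_{G_{L'}}$, and Lemma \ref{charTwist} promotes that to an isomorphism over $G_K$ with $\rho_p \otimes \phi$ or $\rho_p^\vee \otimes \phi$, showing $\sigma \in \Gamma$. (The $n=2$ case is degenerate: $a_v = b_v$, so the two statements coincide.)

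The step I expect to be the main obstacle is this final Krull--Schmidt argument: fixing the symmetric quantity $a_v+b_v$ at each $v$ a priori allows the alternative $\sigma(a_v) = a_v$ at some places and $\sigma(a_v) = b_v$ at others, and only by lifting the pointwise trace identity to an isomorphism of semi-simple $G_{L'}$-representations and then isolating a single summand can we extract a uniform choice. Once the identifications $\Gi = \mathrm{Gal}(E/\QQ(\{a_v\}_v))$ and $\Gamma = \mathrm{Gal}(E/\QQ(\{a_v+b_v\}_v))$ are in hand, the Galois correspondence for $E/\QQ$ gives $F^{\mathrm{inn}} = \QQ(\{a_v\}_v)$ and $F = \QQ(\{a_v+b_v\}_v)$, completing the proof.
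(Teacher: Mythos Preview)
Your proof is correct and follows essentially the same route as the paper: establish the easy containments by evaluating the twist relations at Frobenii in $G_{L'}$, then for the reverse containments pass from trace identities to isomorphisms of semi-simple $G_{L'}$-representations (in the second case via $\rho_p \oplus \rho_p^\vee$), use strong irreducibility to isolate a summand, and finally invoke Lemma~\ref{charTwist} to lift to $G_K$. The paper phrases the Chebotarev and Krull--Schmidt steps more tersely, and it does not bother to exclude $\rho_p|_{G_{L'}} \simeq \rho_p^\vee|_{G_{L'}}$ separately (since ${}^\sigma\rho_p|_{G_{L'}}$ is irreducible, it must match one of the summands regardless), but the argument is the same.
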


\begin{proof}
   We first prove that $f_v(x)\in F^{inn}[x]$. This is because for any inner-twist $(\sigma, \chi)$, the character $\chi$ becomes trivial after restricting to $G_{L'}$ so $\rho_p|_{G_{L'}} \cong {}^\sigma \rho_p|_{G_{L'}}$ which means $f_v$ is invariant under the action of $\Gamma_{inn}$ which implies the result. Now if $(\tau, \eta)$ is an outer-twist, after restriction to $G_{L'}$ one has $\rho_p|_{G_{L'}} \cong {}^\tau \rho^{-T}_p|_{G_{L'}}$. Looking at the characteristic polynomials of $\mathrm{Fr}_v$ of both sides one gets $X^n-a_vX^{n-1}+ \cdots + (-1)^{n-1} b_vX+(-1)^n = X^n-{}^\tau b_vX^{n-1}+ \cdots + (-1)^{n-1} {}^\tau a_vX+(-1)^n$. In particular, $a_v + b_v$ is invariant under the outer-twists as well. This implies $a_v+b_v \in F$. 
   
   Now, let $F'=\QQ(\{a_v\}_v) \subseteq F^{inn}$. We want to prove that $F'$ is the field fixed by the inner-twists. This suffices because $E$ was assumed to be Galois over $\QQ$ and hence it is Galois over $F'$.
   So it's enough to construct an inner-twist of $\rho_p$ for every $\sigma \in \mathrm{Gal}(\overline{\QQ}/F')$. Now, note that $\rho_p|_{G_{L'}}$ has traces in $F'$ so $\rho_p|_{G_{L'}}$ and ${}^\sigma\rho_p|_{G_{L'}}$ have the same traces and since they are semisimple, they must be conjugate. Now, by lemma \ref{charTwist} there exists a character such that $\rho_p \otimes \chi \simeq {}^\sigma \rho_p$ so we are done. 
   
   At last, let $F''=\QQ(\{a_v + b_v\}_v) \subseteq F^{inn}$. We want to prove that $F''$ is the field fixed by all extra-twists. So it's enough to construct an inner-twist or an outer-twist of $\rho_p$ for every $\sigma \in \mathrm{Gal}(\overline{\QQ}/F'')$. Now, note that $(\rho_p \oplus \rho^{-T}_p)|_{G_{L'}}$ has traces in $F''$ because the trace of the image of $\mathrm{Fr}_v$ would be $a_v+b_v$. So $(\rho_p \oplus \rho^{-T}_p)|_{G_{L'}}$ and ${}^\sigma(\rho_p \oplus \rho^{-T}_p)|_{G_{L'}}$ have the same traces and since they are semisimple, should be conjugate. 
   Now by the strong irreducibility assumption, we must either have 
    $\rho_p |_{G_{L'}} \simeq {}^\sigma\rho_p|_{G_{L'}}$
   or
   $\rho_p |_{G_{L'}} \simeq {}^\sigma\rho_p^{-T}|_{G_{L'}}$.
   Then by lemma \ref{charTwist} we either get an inner-twist or an outer-twist. 
\end{proof}

\begin{lemma}\label{isoRep}
    Let $\lambda, \mu: E \hookrightarrow \overline{\QQ}_p$ be two places of $E$ above $p$ and let $G_L=\cap_{\sigma \in \Gamma} \ker (\chi_{\sigma})$ and $L'$ a finite extension of $L$ that is Galois over $K$. Then in the case that there are outer-twists one has:
    \begin{enumerate}
        \item[i.] $V_{\lambda} \simeq V_{\mu}$ as representations of $G_{L'}$ if and only if $\lambda|_{F^{inn}}=\mu|_{F^{inn}}$
        \item[ii.] $V_{\lambda} \simeq V_{\mu}^\vee$ as representations of $G_{L'}$ if and only if 
    $\lambda|_{F}=\mu|_{F}$ but $\lambda|_{F^{inn}}\neq \mu|_{F^{inn}}$
    \end{enumerate}
    and in the case that there are no outer-twists one has that $F=F^{inn}=\QQ(\{a_v \}_v)$ and part (i) of the above is true and part (ii) never occurs. 
\end{lemma}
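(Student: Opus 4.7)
The plan is to reduce the question of isomorphism of semisimple $G_{L'}$-representations to equality of Frobenius traces (via Brauer--Nesbitt and Chebotarev), and then to invoke Lemma \ref{cent} to identify the relevant trace fields with $\Fi$ and $F$. Validity implies, via Remark \ref{RemarkValid}(1), that each $\rho_\lambda$ is strongly irreducible, so the restrictions $V_\lambda|_{G_{L'}}$ and $V_\mu^\vee|_{G_{L'}}$ remain absolutely irreducible, and isomorphism over $G_{L'}$ is detected by traces at $\mathrm{Fr}_v$ for the cofinite set of places $v$ of $L'$ unramified for $\rho_p$. A short elementary-symmetric-function computation, using the validity assumption $\det\rho_p = 1$, shows that $\mathrm{tr}\,\rho_p(\mathrm{Fr}_v) = a_v$ and $\mathrm{tr}\,\rho_p^\vee(\mathrm{Fr}_v) = b_v$.

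For part (i), valid in both sub-cases simultaneously: $V_\lambda \simeq V_\mu$ on $G_{L'}$ iff $\lambda(a_v) = \mu(a_v)$ for all such $v$, iff $\lambda$ and $\mu$ agree on $\QQ(\{a_v\}_v)$, which by Lemma \ref{cent} is exactly $\Fi$. For part (ii) in the outer-twist case, I would fix an outer-twist $(\tau,\chi_\tau)$; since $G_{L'}\subseteq G_L\subseteq \ker\chi_\tau$, the relation ${}^\tau\rho_p \cong \rho_p^\vee\otimes\chi_\tau$ restricts to ${}^\tau\rho_p|_{G_{L'}} \cong \rho_p^\vee|_{G_{L'}}$, hence $\tau(a_v) = b_v$ at every unramified Frobenius. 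Thus $V_\lambda \simeq V_\mu^\vee$ on $G_{L'}$ iff $\lambda(a_v) = \mu(b_v) = (\mu\circ\tau)(a_v)$ for all $v$, iff $\lambda|_{\Fi} = (\mu\circ\tau)|_{\Fi}$. Since $\tau$ fixes $F$ but represents the non-trivial element of $\mathrm{Gal}(\Fi/F) = \ZZ/2$ (because $\tau \in \Gamma \setminus \Gi$), the two embeddings of $\Fi$ over $F$ extending $\mu|_F$ are exactly $\mu|_{\Fi}$ and $(\mu\circ\tau)|_{\Fi}$, and these are distinct; so the condition $\lambda|_{\Fi} = (\mu\circ\tau)|_{\Fi}$ rearranges precisely to $\lambda|_F = \mu|_F$ together with $\lambda|_{\Fi}\neq\mu|_{\Fi}$.

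For the case without outer-twists, $\Gamma = \Gi$ immediately gives $F = E^\Gamma = E^{\Gi} = \Fi$, and Lemma \ref{cent} identifies this field with $\QQ(\{a_v\}_v)$; part (i) then transports verbatim. To rule out part (ii), suppose for contradiction that $V_\lambda \simeq V_\mu^\vee$ as $G_{L'}$-representations; since $L'/K$ is Galois and $V_\mu^\vee|_{G_{L'}}$ is absolutely irreducible by strong irreducibility, Lemma \ref{charTwist} upgrades this to $V_\lambda \simeq V_\mu^\vee\otimes\phi$ as $G_K$-representations for some character $\phi$. Writing $\mu = \lambda\circ\sigma$ for the unique $\sigma \in \mathrm{Gal}(E/\QQ)$ (possible since $E/\QQ$ is Galois) and unwinding through the embeddings produces an outer-twist $(\sigma,\phi_0)$ of $\rho_p$, contradicting the hypothesis. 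The one subtle point here is that $\phi$ is a priori only $\overline{\QQ}_p^\times$-valued, but the remark following the definition of extra-twists forces $\phi_0$ to take values in $E_p^\times$, so no coefficient issue prevents the contradiction. This last translation between $\overline{\QQ}_p$-level isomorphism of $\rho_\lambda$ and $\rho_\mu^\vee$ and an $E_p$-level extra-twist of $\rho_p$ itself is the main delicate step; the rest is bookkeeping with trace fields and the quadratic extension $\Fi/F$.
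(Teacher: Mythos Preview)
Your proof is correct in outline and close to the paper's (reduce to Frobenius traces via Brauer--Nesbitt and Chebotarev, then invoke Lemma~\ref{cent}). Two differences are worth noting.

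For (ii) in the outer-twist case you route through a fixed outer-twist $\tau$, using $\tau(a_v)=b_v$ to rewrite the trace condition as $\lambda|_{\Fi}=(\mu\circ\tau)|_{\Fi}$ and then argue with the quadratic extension $\Fi/F$. The paper instead compares characteristic polynomials directly: from $V_\lambda\simeq V_\mu^\vee$ one reads off $\lambda(a_v)=\mu(b_v)$ and $\lambda(b_v)=\mu(a_v)$, hence $\lambda(a_v+b_v)=\mu(a_v+b_v)$, giving $\lambda|_F=\mu|_F$ by the second identification in Lemma~\ref{cent}; the inequality $\lambda|_{\Fi}\neq\mu|_{\Fi}$ then comes from non-essential-self-duality of $V_\lambda$ via Lemma~\ref{charTwist} and part~(i). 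Both routes are valid; yours trades the $a_v+b_v$ part of Lemma~\ref{cent} for an explicit Galois-theoretic step.

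For the no-outer-twist case your argument is more elaborate than necessary and has a small wrinkle. The paper simply reruns the computation from (ii): it forces $\lambda|_F=\mu|_F$ together with $\lambda|_{\Fi}\neq\mu|_{\Fi}$, which is impossible once $F=\Fi$. You instead lift $V_\lambda\simeq V_\mu^\vee\otimes\phi$ back to an outer-twist of $\rho_p$. This is fine for $n>2$, but for $n=2$ (always the no-outer-twist case here) one has $\rho_p^\vee\simeq\rho_p$ since $\det\rho_p=1$, so the relation you manufacture is merely an inner-twist and yields no contradiction; indeed $V_\lambda\simeq V_\mu^\vee$ genuinely occurs for $n=2$ whenever $\lambda|_{\Fi}=\mu|_{\Fi}$. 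So ``part (ii) never occurs'' should be read as ``the right-hand condition of (ii) is vacuous when $F=\Fi$'', not as ``$V_\lambda\not\simeq V_\mu^\vee$ for all $\lambda,\mu$''. This does not affect the downstream application in Proposition~\ref{LieAlgImage}, but your phrasing of the contradiction needs this adjustment.
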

\begin{proof}
    Since all our representations are semi-simple, it is enough to check equality on characteristic polynomials and since they are continuous it is enough to check this on a dense subset. We check this on the Frobenius elements of finite places of $L'$ at which $\rho_p$ is unramified. 

    Keeping the notation of lemma \ref{cent}, the characteristic polynomial of $\mathrm{Fr}_v$ acting on $V_\lambda$ is $X^n-\lambda(a_v)X^{n-1}+\cdots+(-1)^{n-1}\lambda(b_v)X+(-1)^n$ and on $V_\mu$ is $X^n-\mu(a_v)X^{n-1}+\cdots+(-1)^{n-1}\mu(b_v)X+(-1)^n$. Since $a_v$'s generate $F^{inn}$ be lemma \ref{cent}, part i follows. For part ii, notice that
    the characteristic polynomial of $\mathrm{Fr}_v$ acting on $V_{\mu}^\vee$ is $X^n-\mu(b_v)X^{n-1}+\cdots+(-1)^{n-1}\mu(a_v)X+(-1)^n$. So if $V_{\lambda} \simeq V_{\mu}^\vee$ then $\lambda(a_v) = \mu(b_v)$ and $\lambda(b_v) = \mu(a_v)$ which implies that $\lambda(a_v+b_v) = \mu(a_v+b_v)$ and hence $\lambda|_{F}=\mu|_{F}$. On the other hand, $V_\lambda$ is not essentially self-dual so by lemma \ref{charTwist} its restriction to $G_{L'}$ cannot be self-dual. So $V_\lambda$ and $V_\mu$ are not allowed to be isomorphic as $G_{L'}$-representations in this case, hence $\lambda|_{F^{inn}}\neq \mu|_{F^{inn}}$. The other direction also follows easily since the characteristic polynomials of Frobenius elements at unramified places clearly match. 
\end{proof}

\subsection{The Lie algebra of the image}
Now, we want to compute the $\QQ_p$-Lie-algebra of the image of $\rho_p$. First, we need to use the results of section \ref{form} to construct the right algebraic group which contains the image and then compare the Lie-algebra of the image with the (algebraic) Lie-algebra of this group. 

Recall that we assumed that $\rho_p$ has trivial determinant. Therefore we have
$$
\rho_p : G_K \rightarrow \SL_n (E_p)
$$
We first define a 1-cocycle $f:\Gamma \rightarrow \mathrm{Aut}_{E_p}(\SL_n)$ using extra-twists. For every inner-twist $\sigma \in \Gamma^{inn}$ one has that $\rho_p|_{G_{L}}$ and ${}^\sigma \rho_p|_{G_{L}}$ have the same trace. Since each $\rho_\lambda$ is strongly irreducible, this means that they are isomorphic and there exists $\alpha_\sigma \in \SL_n(E_p)$ such that $\rho_p|_{G_{L}} = \alpha_\sigma \cdot {}^\sigma \rho_p|_{G_{L}} \cdot \alpha^{-1}_\sigma$. For the inner-twist $(\sigma,\chi_\sigma)$ we define $f_\sigma = \mathrm{ad}(\alpha_\sigma)$. 
If $\tau \in \Gamma^{out}$ is an outer-twist (if there exist any) then $\rho_p|_{G_{L}}$ and ${}^\tau \rho^{-T}_p|_{G_{L}}$ have the same traces and there exists $\alpha_\tau \in \SL_n(E_p)$ such that $\rho_p|_{G_{L}} = \alpha_\tau \cdot {}^\tau \rho^{-T}_p|_{G_{L}} \cdot \alpha^{-1}_\tau$.
For the outer-twist $(\tau,\chi_\tau)$ we define $f_\tau = \mathrm{ad}(\alpha_\tau) \circ (\cdot)^{-T}$. One can easily check that $f:\Gamma \rightarrow \mathrm{Aut}_{E_p}(\SL_n)$ defined above is a 1-cocycle. 

Now, as in section \ref{form} we can define the twisted action of $\Gamma$ by this cocycle on $\SL_n$. From the construction of $f$, it is clear that every matrix in the image of $G_L$ is invariant under this twisted action.Let $H = (\mathrm{Res}^{E_p}_{F_p} \SL_n)^{tw_f(\Gamma)}$ that is an algebraic group over $F_p$. Then it follows:
\begin{cor}\label{groupH}
     The representation $\rho_p|_{G_L}$ factors through $H(F_p)\subseteq \SL_n(E_p)$, i.e.  $\rho_p(G_L)\subseteq H(F_p)$. 
\end{cor}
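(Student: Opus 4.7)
The plan is a direct verification that every element of $\rho_p(G_L)$ satisfies the twisted-invariance condition that, via Proposition \ref{GroupForm}, cuts out $H(F_p)$ inside $\SL_n(E_p)$. The argument is structural rather than computational, so I expect no serious obstacle; the only work is lining up the definitions.

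First I would unpack the choice of $L$: since $G_L=\cap_{\sigma\in\Gamma}\ker(\chi_\sigma)$, each twist character becomes trivial on $G_L$. Thus for an inner-twist $(\sigma,\chi_\sigma)$ the isomorphism ${}^\sigma\rho_p\cong \rho_p\otimes\chi_\sigma$ restricts to a genuine isomorphism ${}^\sigma\rho_p|_{G_L}\cong \rho_p|_{G_L}$, and for an outer-twist $(\tau,\chi_\tau)$ to ${}^\tau\rho_p|_{G_L}\cong \rho_p^{-T}|_{G_L}$. By strong irreducibility (built into the validity of $\rho_p$) and Schur's lemma, the matrices $\alpha_\sigma$ constructed before the statement realize these restricted isomorphisms and are unique up to scalar; since both sides have trivial determinant, the scalar can be chosen so that $\alpha_\sigma\in\SL_n(E_p)$, justifying the construction of $f$.

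Next I would apply Proposition \ref{GroupForm} with $R=F_p$. The embedding
\[
H(F_p)\hookrightarrow (\mathrm{Res}^{E_p}_{F_p}\SL_n)(F_p)=\SL_n(E_p)
\]
identifies $H(F_p)$ with the subgroup of $g\in\SL_n(E_p)$ fixed under the twisted action of $\Gamma=\mathrm{Gal}(E_p/F_p)$, i.e.\ those $g$ satisfying $f_\sigma({}^\sigma g)=g$ for every $\sigma\in\Gamma$, where ${}^\sigma g$ is the ordinary coefficient-wise Galois action. With this reformulation in hand, it suffices to check, for each $h\in G_L$ and each $\sigma\in\Gamma$, that $f_\sigma({}^\sigma\rho_p(h))=\rho_p(h)$.

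Finally the verification is a one-line calculation in each of the two cases. For $\sigma\in\Gi$, the defining relation $\rho_p|_{G_L}=\alpha_\sigma\cdot {}^\sigma\rho_p|_{G_L}\cdot\alpha_\sigma^{-1}$ evaluated at $h\in G_L$ reads $\rho_p(h)=\mathrm{ad}(\alpha_\sigma)({}^\sigma\rho_p(h))=f_\sigma({}^\sigma\rho_p(h))$. For $\sigma\in\Go$, the analogous relation $\rho_p|_{G_L}=\alpha_\sigma\cdot {}^\sigma\rho_p^{-T}|_{G_L}\cdot\alpha_\sigma^{-1}$ gives $\rho_p(h)=\bigl(\mathrm{ad}(\alpha_\sigma)\circ(\cdot)^{-T}\bigr)({}^\sigma\rho_p(h))=f_\sigma({}^\sigma\rho_p(h))$. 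In both cases $\rho_p(h)$ is twisted-invariant under every $\sigma$, hence lies in $H(F_p)$, which is exactly the claim.
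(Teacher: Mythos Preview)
Your argument is correct and follows exactly the paper's approach: the paper states the corollary as an immediate consequence of the sentence ``From the construction of $f$, it is clear that every matrix in the image of $G_L$ is invariant under this twisted action,'' and you have simply written out that verification in detail.

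One small quibble: the identification of $H(F_p)$ with the twisted-$\Gamma$-invariants inside $\SL_n(E_p)$ is not an application of Proposition~\ref{GroupForm} (that proposition concerns $E_p$-algebras $R$, and $F_p$ is not one). Rather, it is just the definition of $H$ as $(\mathrm{Res}^{E_p}_{F_p}\SL_n)^{tw_f(\Gamma)}$ evaluated on the $F_p$-algebra $R=F_p$, using $(\mathrm{Res}^{E_p}_{F_p}\SL_n)(F_p)=\SL_n(E_p)$. This does not affect the validity of your proof.
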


Note that by proposition \ref{GroupForm}, $H$ is a form of $\SL_n$ and in particular is a semi-simple group. Also, note that since $G_L$ is open in $G_K$, the Lie-algebra of the $p$-adic Lie groups $\rho_p (G_K)$ and $\rho_p (G_L)$ are the same. Let $\mathfrak{g}$ be the Lie algebra (over $\QQ_p$) of $\rho_p (G_L)$ and $\mathfrak{h}$ be the Lie algebra of the algebraic group $H/F_p$, both viewed as Lie-subalgebras of $\SL_n(E_p)$. Our next goal is to show that these two Lie-algebras are in fact equal. 

\begin{prop}\label{LieAlgImage}
    With the notation as above, $\mathfrak{g}=\mathfrak{h}$.
\end{prop}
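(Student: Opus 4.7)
My plan is as follows. The inclusion $\mathfrak{g} \subseteq \mathfrak{h}$ is immediate from Corollary \ref{groupH}, so the remaining task is to prove the reverse inclusion, which I would do by comparing dimensions after extending scalars to $\overline{\QQ}_p$. Identifying $\mathfrak{sl}_n(E_p) \otimes_{\QQ_p} \overline{\QQ}_p$ with $\prod_\lambda \mathfrak{sl}_n(\overline{\QQ}_p)$ indexed by embeddings $\lambda : E \hookrightarrow \overline{\QQ}_p$, both $\mathfrak{g}_{\overline{\QQ}_p}$ and $\mathfrak{h}_{\overline{\QQ}_p}$ sit inside this product. Validity ensures that the projection of $\mathfrak{g}_{\overline{\QQ}_p}$ to each factor is surjective onto $\mathfrak{sl}_n(\overline{\QQ}_p)$, while Corollary \ref{formBaseChange} (applied with $F_0 = E$) gives an explicit description of $\mathfrak{h}_{\overline{\QQ}_p}$ as a twisted-diagonal subalgebra of dimension $(n^2-1)[F:\QQ]$.

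The heart of the argument is a Goursat-type lemma for products of simple Lie algebras: since $\mathfrak{sl}_n$ is simple, a subalgebra of $\prod_\lambda \mathfrak{sl}_n(\overline{\QQ}_p)$ with surjective single-factor projections is determined by an equivalence relation $\sim$ on the set of embeddings together with a compatible system of automorphisms of $\mathfrak{sl}_n$, and for any pair $(\lambda,\mu)$ the projection is either the full product or the graph of an automorphism $\phi_{\lambda\mu} \in \mathrm{Aut}(\mathfrak{sl}_n)$. I would then identify this equivalence relation with the orbit relation of the $\Gamma$-action on embeddings, and show that the linking automorphisms coincide with those coming from the cocycle $f$ of Section \ref{form}.

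For the forward direction (graph implies same orbit), integrating the Lie-algebra-level equality on a small open neighborhood of the identity and taking Galois closure, I obtain a finite Galois extension $L_0/K$ on which $V_\lambda|_{G_{L_0}}$ becomes isomorphic either to $V_\mu|_{G_{L_0}}$ (if $\phi_{\lambda\mu}$ is inner) or to $V_\mu^\vee|_{G_{L_0}}$ (if $\phi_{\lambda\mu}$ involves the outer automorphism $X \mapsto -X^T$). Lemma \ref{charTwist} then promotes this to an isomorphism on all of $G_K$ up to a character twist, and further restricting to a sufficiently large $G_{L'}$ that kills this character puts us in the setting of Lemma \ref{isoRep}, which places $\lambda$ and $\mu$ in the same $\Gamma$-orbit (an inner orbit if $\phi_{\lambda\mu}$ was inner, an outer-twist-linked orbit otherwise). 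For the converse, if $\lambda$ and $\mu$ lie in the same $\Gamma$-orbit, the inclusion $\rho_p(G_L) \subseteq H(F_p)$ from Corollary \ref{groupH} together with the explicit cocycle construction of $H$ forces the projection of $\mathfrak{g}_{\overline{\QQ}_p}$ to $(\lambda,\mu)$ to be contained in the graph of the automorphism determined by $f$; combined with surjectivity on each single factor this projection must be exactly that graph. Hence the equivalence classes and linking automorphisms of $\mathfrak{g}_{\overline{\QQ}_p}$ match those of $\mathfrak{h}_{\overline{\QQ}_p}$, so the two subalgebras have the same $\overline{\QQ}_p$-dimension and $\mathfrak{g} = \mathfrak{h}$ by descent.

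The main obstacle I anticipate is the integration step: cleanly passing from the Lie-algebra-level graph to a group-level isomorphism on some $G_{L_0}$, keeping careful track of whether $\phi_{\lambda\mu}$ integrates to a conjugation (inner case) or to a transpose-inverse composed with a conjugation (outer case), and feeding the output into Lemmas \ref{charTwist} and \ref{isoRep} so that the resulting twist of $\rho_p$ is consistently distinguished as either an inner- or outer-twist in the orbit structure of $\Gamma$.
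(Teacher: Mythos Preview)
Your proposal is correct and uses the same core ingredients as the paper: a Goursat-type lemma for products of copies of $\mathfrak{sl}_n$, together with Lemma~\ref{isoRep} to control when two factors are linked by a graph. The difference is one of framing. You work inside $\prod_{\lambda: E \hookrightarrow \overline{\QQ}_p} \mathfrak{sl}_n(\overline{\QQ}_p)$, where $\mathfrak{h}_{\overline{\QQ}_p}$ sits as a twisted diagonal, and then match both the equivalence relation and the linking automorphisms of $\mathfrak{g}$ to those of $\mathfrak{h}$. The paper instead invokes Proposition~\ref{GroupForm} to identify $\mathfrak{h}_{\overline{\QQ}_p}$ with the \emph{full} product $\prod_{\sigma: F \hookrightarrow \overline{\QQ}_p} \mathfrak{sl}_n(\overline{\QQ}_p)$ indexed by embeddings of $F$ (not $E$); in that picture the only task is to show that for distinct $\sigma,\tau$ the pairwise projection of $\overline{\mathfrak{g}^{\mathrm{der}}}$ is never a graph, which is precisely the contrapositive of your ``forward direction''. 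This eliminates your converse step and the matching of linking automorphisms entirely---both of which, incidentally, already follow from $\mathfrak{g}\subseteq\mathfrak{h}$ plus a dimension count, so your argument is not wrong, merely longer than necessary. As for the obstacle you flag (integrating the Lie-algebra graph to a group-level isomorphism on some $G_{L_0}$), the paper handles it exactly as you suggest: pass to an open subgroup, take a Galois closure, and apply Lemma~\ref{isoRep} directly; Lemma~\ref{charTwist} is not invoked at this point.
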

\begin{proof}
    First note that $\mathfrak{g} \subseteq \mathfrak{h}$ by the last corollary. Since $\mathfrak{h}$ is semi-simple, it suffices to prove that $\overline{\mathfrak{g}^{\mathrm{der}}} = \mathfrak{g}^{\mathrm{der}} \otimes_{\QQ_p} \overline{\QQ_p}$ is equal to $\overline{\mathfrak{h}} = \mathfrak{h} \otimes_{\QQ_p} \overline{\QQ_p}$.

    For every embedding $\sigma : F \hookrightarrow \overline{\QQ}_p$ fix an extension $\widetilde{\sigma} : E \hookrightarrow \overline{\QQ}_p$ of $\sigma$. All of the other extensions of $\sigma$ can be obtained by composing with different elements of the Galois group $\Gamma=\mathrm{Gal}(E/F)$, i.e. are of the form $\widetilde{\sigma}\tau$ for $\tau \in \Gamma$. Now we base change our representation $\rho_p|_{G_L}$
    to $\overline{\QQ_p}$ to get: 

\begin{tikzcd}
\rho_p:G_L \arrow[r] \arrow[rd] & \SL_n(E_p) \arrow[r, hook]        &  \SL_n(\overline{E_p}) \arrow[r, "="]       & \mathrm{Res}^E_F(\SL_n)(\overline{F_p})                              \\
                                & H(F_p) \arrow[r, hook] \arrow[u, hook] & H(\overline{F_p}) \arrow[r, "="] \arrow[u, hook] & \mathrm{Res}^E_F(\SL_n)^{tw(\Gamma)}(\overline{F_p}) \arrow[u, hook]
\end{tikzcd}
\bigbreak
 \noindent where $\overline{E_p}:=E_p \otimes_{\QQ_p} \overline{\QQ}_p$ and
  $\overline{F_p}:=F_p \otimes_{\QQ_p} \overline{\QQ}_p = \prod_{\sigma : F \hookrightarrow \overline{\QQ}_p}\overline{\QQ}_p$. Note that we have
    
    $$
    \mathrm{Res}^E_F(\SL_n)(\overline{F_p}) \simeq
    \prod_{\sigma:F \hookrightarrow \overline{\QQ_p}} \SL_n (E \otimes_{F,\sigma}\overline{\QQ_p}) \simeq
    \prod_{\sigma:F \hookrightarrow \overline{\QQ_p}} \SL_n (E \otimes_F E \otimes_{E,\widetilde{\sigma}}\overline{\QQ_p}) 
    $$
    $$
    \simeq 
    \prod_{\sigma:F \hookrightarrow \overline{\QQ_p}} \prod_{\Gamma} \mathrm{SL}_n(\overline{\QQ_p}) \simeq
    \prod_{\lambda:E \hookrightarrow \overline{\QQ_p}} \mathrm{SL}_n(\overline{\QQ_p})
    $$
\bigbreak
   \noindent where $\lambda = \widetilde{\sigma}\tau$ for $\tau \in \Gamma$. By proposition \ref{GroupForm}, projection to the id-component of $\Gamma$ gives the isomorphism of the form $\mathrm{Res}^E_F(\SL_n)^{tw(\Gamma)}$ of $\SL_n$ with $\SL_n$ over $E_p$ so we have: 
\bigbreak
\begin{tikzcd}
\rho_p:G_L \arrow[r] \arrow[rd] & \SL_n(E_p) \arrow[r, hook]        & \prod_{\sigma:F \hookrightarrow \overline{\QQ_p}} \prod_{\Gamma} \mathrm{SL}_n(\overline{\QQ_p}) \arrow[r, "="] & \prod_{\lambda:E \hookrightarrow \overline{\QQ_p}} \mathrm{SL}_n(\overline{\QQ_p}) \\
                                & H(F_p) \arrow[r, hook] \arrow[u, hook] & \prod_{\sigma:F \hookrightarrow \overline{\QQ_p}}  \mathrm{SL}_n(\overline{\QQ_p}) \arrow[u, hook]              &                                                                                   
\end{tikzcd}
\bigbreak
\noindent  For each embedding $\sigma : F \hookrightarrow \overline{\QQ}_p$
    the composition
    $$
    \rho_{\sigma}: G_{L} \rightarrow
    H(F_p) \hookrightarrow 
    \prod_{\sigma : F \hookrightarrow \overline{\QQ}_p} \mathrm{SL}_n(\overline{\QQ}_p)
    \xrightarrow{\mathrm{pr}_\sigma}
    \mathrm{SL}_n(\overline{\QQ}_p)
    $$
    corresponded to the action of $G_L$ on the vector space $V_{\lambda}$ for some embedding $\lambda:E \hookrightarrow \overline{\QQ}_p$ extending $\sigma$. Note that by lemma \ref{isoRep} these $V_\sigma$'s are neither isomorphic nor dual to each other after any finite extension. This is the main point of the rest of the argument. 

    On the level of Lie algebras this gives the embedding: $$\overline{\mathfrak{g}^{\mathrm{der}}} \subseteq \overline{\mathfrak{h}} \xrightarrow{\simeq}  \prod_{\sigma : F \hookrightarrow \overline{\QQ}_p} \mathfrak{sl}_n(\overline{\QQ}_p)$$
    Let
    $\mathfrak{g}^{{\mathrm{der}}}_{\sigma} \subseteq \mathfrak{sl}_n(\overline{\QQ}_p)$  be the projection of $\overline{\mathfrak{g}^{\mathrm{der}}}$ to the $\sigma$-component in of the above map. This is the $\overline{\QQ}_p$-Lie-algebra of the image of the representation $\rho_\sigma$ (=$\rho_\lambda$ for some $\lambda$ extending $\sigma$), so by our assumption on $\rho_p$ being valid, we have $\mathfrak{g}^{{\mathrm{der}}}_{\sigma} = \mathfrak{sl}_n(\overline{\QQ}_p)$. 

    Now we can apply \cite[Lemma 4.6]{ribet1980twists} to
    $$\overline{\mathfrak{g}^{\mathrm{der}}} \subseteq \overline{\mathfrak{h}} \rightarrow  \prod_{\sigma : F \hookrightarrow \overline{\QQ}_p} \mathfrak{sl}_n(\overline{\QQ}_p)$$
    We only need to prove that for every $\sigma, \tau : F \hookrightarrow \overline{\QQ}_p$ the projections 
    $(\mathrm{pr}_\sigma \times \mathrm{pr}_\tau) (\overline{\mathfrak{g}^{\mathrm{der}}})$ and $(\mathrm{pr}_\sigma \times \mathrm{pr}_\tau) (\overline{\mathfrak{h}})$ are equal. We follow the arguments of \cite[\S6.2]{serre1972proprietes}.
    
    Clearly it's enough to show that
    $(\mathrm{pr}_\sigma \times \mathrm{pr}_\tau) (\overline{\mathfrak{g}^{\mathrm{der}}}) = \mathfrak{sl}_n(\overline{\QQ}_p) \times \mathfrak{sl}_n(\overline{\QQ}_p)$. Note that the first factor corresponds to the representation $V_{\sigma}$
    and the second to $V_{\tau}$. Now by the Lie algebra version of the Goursat's theorem \cite[Lemma 5.2.1]{ribet1976galois} if $(\mathrm{pr}_\sigma \times \mathrm{pr}_\tau) (\overline{\mathfrak{g}^{\mathrm{der}}})$ is not equal to $\mathfrak{sl}_n(\overline{\QQ}_p) \times \mathfrak{sl}_n(\overline{\QQ}_p)$
    then it has to be the graph of an isomorphism. Let's call this automorphism
    $\phi:\mathfrak{sl}_n(\overline{\QQ}_p) \rightarrow \mathfrak{sl}_n(\overline{\QQ}_p)$. Since $\mathfrak{sl}_n$ is simple the group of its outer automorphisms is the group of automorphisms of its Dynkin diagram which is trivial for $n=2$ and is
    isomorphic to $\frac{\mathbb{Z}}{2\mathbb{Z}}$ in the $n>2$ case.
    In this case, the class of this non-trivial outer automorphism is given by the map
    $X\mapsto -X^{T}$. So $\phi$ is either an inner automorphism or a conjugate of this outer automorphism. 
    
    First assume $\phi$ is an inner automorphism and is given by conjugation by some  matrix $\alpha$. 
    In other words, we have the following diagram:

\begin{center} 
\begin{tikzcd}
                                     &  & \mathfrak{sl}_n(\overline{\QQ}_p) \arrow[dd, "\phi=ad(\alpha)"] \\
\overline{\mathfrak{g}} \arrow[rru, "pr_{\sigma}"] \arrow[rrd, "pr_{\tau}"] &  &                                                 \\
                                     &  & \mathfrak{sl}_n(\overline{\QQ}_p)                               
\end{tikzcd}
\end{center}  
which means that $V_{\sigma}$ and $V_{\tau}$ are isomorphic as representations of $\overline{\mathfrak{g}}$. This implies that they are indeed isomorphic as representations of some open subgroup of $G_L$ which contradicts Lemma \ref{isoRep}. 
Now assume that $\phi$ is a conjugate of $X\mapsto -X^{T}$. Similarly, this means that $V_{\sigma}\cong V_{\tau}^{\vee}$ as representations of some small enough normal open subgroups of $G_L$ which again contradicts lemma \ref{isoRep}. This implies the result. 
\end{proof}

\begin{cor}\label{bigImCor}
    The image of $G_L$ under $\rho_p$ is an open subgroup of the $p$-adic Lie group $H(F_p)$.
\end{cor}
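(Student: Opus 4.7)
The plan is to deduce openness directly from the Lie algebra equality in Proposition \ref{LieAlgImage} via the standard dictionary between infinitesimal and local data for $p$-adic Lie groups.

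First I would observe that $G_L$ is profinite, hence compact, so $\rho_p(G_L)$ is a compact and in particular closed subgroup of the $p$-adic Lie group $H(F_p)$. Since closed subgroups of $p$-adic Lie groups are themselves $p$-adic Lie subgroups (this is Cartan's theorem in the $p$-adic setting, see e.g.\ Bourbaki or Serre, \emph{Lie Algebras and Lie Groups}), $\rho_p(G_L)$ acquires a canonical $p$-adic Lie structure whose $\QQ_p$-Lie algebra is, by definition, $\mathfrak{g}$.

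Second, the $\QQ_p$-Lie algebra of the $p$-adic Lie group $H(F_p)$ is $\mathfrak{h}$, viewed as a $\QQ_p$-Lie subalgebra of $\mathfrak{sl}_n(E_p)$ via restriction of scalars $F_p \to \QQ_p$. By Proposition \ref{LieAlgImage} we already know $\mathfrak{g} = \mathfrak{h}$ inside $\mathfrak{sl}_n(E_p)$.

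Finally I would invoke the standard fact that a closed subgroup $N$ of a $p$-adic Lie group $M$ whose Lie algebra coincides with that of $M$ is open in $M$: the exponential map provides, on a sufficiently small neighborhood $U$ of $0$ in the common Lie algebra, a local homeomorphism with an open neighborhood of the identity of $M$, and by construction $\exp(U) \subseteq N$, so $N$ contains an open neighborhood of $e$ in $M$ and is therefore open. Applying this with $N = \rho_p(G_L)$ and $M = H(F_p)$ gives the claim.

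There is really no obstacle here: all the essential content is packaged in Proposition \ref{LieAlgImage}, and this corollary is merely the routine passage from the Lie-algebraic equality to the corresponding group-level openness statement.
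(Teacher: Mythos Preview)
Your proposal is correct and matches the paper's approach: the paper states the corollary without proof, treating it as immediate from Proposition~\ref{LieAlgImage} together with Corollary~\ref{groupH}, and your write-up simply makes explicit the standard passage from equality of $\QQ_p$-Lie algebras to openness via Cartan's theorem for $p$-adic Lie groups.
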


If $\rho_p$ has no outer-twists, then the cocycle $f$ is always defined by an inner automorphism and $H$ is an inner-form of $\SL_n$. If there is at least one outer-twist then this is not true anymore, but $H$ becomes an inner-form after a degree two extension. In fact, the restriction of $f$ to the index two subgroup $\Gi$ factors through $\mathrm{Inn}_{E_p}(\SL_n) \subset \mathrm{Aut}_{E_p}(\SL_n)$. Then by corollary \ref{formBaseChange}, the base change of $H$ to $F^{\mathrm{inn}}_p = \Fi \otimes_{\QQ} \QQ_p$ is an inner-form. In any case we have:
\begin{cor}\label{split}
    The group $H_{F^{\mathrm{inn}}_p}$, the base change of $H$ to $F^{\mathrm{inn}}_p$, is an inner-form of the group $\SL_n$ which splits over $E_p$. Moreover, if $p$ splits in the extension $F^{\mathrm{inn}}_p/F_p$ ($p$ splits in $\Fi$ "more" than it does it $F$) then $H$ is an inner-form of $\SL_n$.
\end{cor}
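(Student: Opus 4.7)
The plan is to break the proof into three pieces: (1) $H_{F^{\mathrm{inn}}_p}$ is an inner form of $\SL_n$, (2) it splits over $E_p$, and (3) under the splitting hypothesis, $H$ itself is already an inner form over $F_p$. The key inputs for (1) and (2) are Proposition \ref{GroupForm} and Corollary \ref{formBaseChange}; for (3) the key input is the cohomology of the sequence relating $\mathrm{Aut}(\SL_n)$ and $\mathrm{PGL}_n$.

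For (1), I would apply Corollary \ref{formBaseChange} to the tower $F \subseteq \Fi \subseteq E$ to obtain
$$H \times_{F_p} F^{\mathrm{inn}}_p \simeq \bigl(\mathrm{Res}^{E_p}_{F^{\mathrm{inn}}_p} \SL_n\bigr)^{tw_{f_0}(\Gi)},$$
where $f_0 = f|_{\Gi}$. By the explicit construction of $f$ given just before Corollary \ref{groupH}, each $f_\sigma$ with $\sigma \in \Gi$ equals the inner automorphism $\mathrm{ad}(\alpha_\sigma)$, so $f_0$ takes values in $\mathrm{Inn}_{E_p}(\SL_n)$; this is exactly what it means for $H_{F^{\mathrm{inn}}_p}$ to be an inner form of $\SL_n$. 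For (2), Proposition \ref{GroupForm} applied to $H$ directly gives $H \times_{F_p} E_p \simeq \SL_n$ via the identity-component projection, and base-changing further shows $H_{F^{\mathrm{inn}}_p}$ splits over $E_p$ as well.

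For (3), assume $n \geq 3$ (the case $n = 2$ is vacuous since then $\Gamma = \Gi$ and $\Fi = F$). The plan is to invoke the short exact sequence of $F_p$-group schemes
$$1 \to \mathrm{PGL}_n \to \mathrm{Aut}(\SL_n) \to \mathbb{Z}/2\mathbb{Z} \to 1$$
and its long exact sequence of pointed sets in flat cohomology
$$H^1(F_p, \mathrm{PGL}_n) \longrightarrow H^1(F_p, \mathrm{Aut}(\SL_n)) \longrightarrow H^1(F_p, \mathbb{Z}/2\mathbb{Z}).$$
The class $[H]$ is represented by $f$, and its image on the right is the class of the quotient cocycle $\Gamma \to \mathbb{Z}/2\mathbb{Z}$ that is trivial on $\Gi$ and non-trivial on $\Go$; under the standard identification of $H^1(F_p, \mathbb{Z}/2\mathbb{Z})$ with isomorphism classes of étale quadratic $F_p$-algebras, this is precisely the class of $F^{\mathrm{inn}}_p = \Fi \otimes_F F_p$. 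The hypothesis that $p$ splits in $F^{\mathrm{inn}}_p/F_p$ means $F^{\mathrm{inn}}_p \simeq F_p \oplus F_p$, so this class is trivial, and by exactness $[H]$ lifts to $H^1(F_p, \mathrm{PGL}_n)$, i.e.\ $H$ is already an inner form over $F_p$. The main subtlety to handle is that $F_p = \prod_{v \mid p} F_v$ is an étale $\QQ_p$-algebra rather than a field; this is not a real obstacle, as one can either work with flat cohomology over $\mathrm{Spec}(F_p)$ directly or argue place-by-place over each $F_v$, reducing to standard local Galois cohomology.
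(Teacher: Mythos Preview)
Your proof is correct. Parts (1) and (2) match the paper exactly: the discussion immediately preceding the corollary already observes that $f|_{\Gi}$ factors through $\mathrm{Inn}_{E_p}(\SL_n)$ and invokes Corollary~\ref{formBaseChange}, and the splitting over $E_p$ is Proposition~\ref{GroupForm}.

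For part (3) you take a different route. The paper argues directly: if $F^{\mathrm{inn}}_p \simeq F_p \times F_p$ as $F_p$-algebras, then $H \times_{F_p} F^{\mathrm{inn}}_p$ is just two copies of $H$ over $F_p$, so the statement ``$H_{F^{\mathrm{inn}}_p}$ is an inner form'' immediately gives that $H$ itself is an inner form over $F_p$. Your cohomological argument via the exact sequence $1 \to \mathrm{PGL}_n \to \mathrm{Aut}(\SL_n) \to \mathbb{Z}/2\mathbb{Z} \to 1$ is a valid repackaging of the same idea: the obstruction to $[H]$ coming from $H^1(F_p,\mathrm{PGL}_n)$ is precisely the class of the quadratic algebra $F^{\mathrm{inn}}_p/F_p$, which vanishes under the splitting hypothesis. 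The paper's argument is shorter and avoids setting up the cohomology machinery over the product ring $F_p$; your argument is more conceptual and makes transparent that $F^{\mathrm{inn}}_p/F_p$ is \emph{the} obstruction, not merely a sufficient condition to check.
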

\begin{proof}
    The first part follows from the discussion above. For the second part, if there are no outer-twists then there is nothing to prove. Otherwise,
    notice that when $p$ splits, $F^{\mathrm{inn}}_p \simeq F_p \times F_p$ so if $H$ becomes an inner-form over $F^{\mathrm{inn}}_p$, it was already an inner-form over $\Fi$. 
\end{proof}

\section{Image of Automorphic Galois Representations}\label{Section-Automorphic}
In this section we will apply the results of the last section to Galois representations attached to certain automorphic representations. In the  case of $\mathrm{GL}_2$ all the results are already known but we include this case as well, for the sake of completeness. Throughout this section, we assume that $K$ is a totally real number field and $\pi$ is a regular cuspidal algebraic automorphic representation of $\GL_n(\AA_K)$. It is known by the work of Harris-Lan-Taylor-Thorne \cite{harris2016rigid} or Scholze \cite{scholze2015torsion}, that there is a compatible family of $p$-adic Galois representations associated with $\pi$. Our goal is to understand the image of these representations. 

Let $|.|^m\omega$ be the central character of $\pi$ where $\omega$ is a finite order Hecke character and $m\geq 1$ an integer. Then for each embedding $\lambda:\QQ(\pi) \hookrightarrow \overline{\QQ_p}$ there exist a continuous semi-simple Galois representation
$$
\rho_{\pi,\lambda}:G_K \rightarrow \mathrm{GL}_n(\overline{\QQ_p})
$$
that is an unramified Galois representaton for all unramified places of $\pi$ not above $p$ and at all these places like $v$, the characteristic polynomial of $\mathrm{Frob}_v$ is determined by the Satake parameters of $\pi$ at $v$. In other words, these representations form a compatible family of Galois representations in the sense of Serre. Moreover, $\det (\rho_{\pi,\lambda}) = (\lambda\circ\omega) \cdot \epsilon_p^m$ where $\epsilon_p$ is the (Global $p$-adic) cyclotomic character and we are regarding the finite order character $\omega$ as a Galois character via class field theory. 

It is not known if we can conjugate these Galois representations to have values in the completions of the Hecke field $\QQ(\pi)$. 
Nevertheless, we will show that we can do this for a finite extension $E$ of $\QQ(\pi)$. This is exactly the reason why we defined extra-twists for any Galois coefficient field containing the Hecke field in definition \ref{extra-def}. 

The Galois representations $\rho_{\pi, \lambda}$ are expected to be irreducible since $\pi$ is cuspidal. Let $t_{\pi,\lambda}:G_K \rightarrow \overline{\QQ}_p$ be the trace of $\rho_{\pi,\lambda}$. This is an irreducible pseudo-representation and it clearly takes values in $\QQ(\pi)_{\lambda}$. 
Then by a result of Rouquier \cite[Theorem 5.1]{rouquier1996caracterisation} (or more generally Chenevier \cite[corollary 2.23]{chenevier2014p}) there exists a central simple algebra $D_\lambda$ over $\QQ(\pi)_\lambda$ of dimension $n^2$ such that this pseudo-representation can be realized as the reduced trace of a representation 
$G_K \rightarrow D_\lambda^\times$. The base change of this representation to $\overline{\QQ}_p$ clearly gives back $\rho_{\pi,\lambda}$ because of the Brauer-Nesbitt theorem. In other words, the image of $\rho_{\pi,\lambda}$ is in fact in $D_\lambda^\times$:
$$
\rho_{\pi, \lambda}:G_K \rightarrow D_\lambda^\times \subset D_\lambda^\times \otimes_{\QQ(\pi)_\lambda} \overline{\QQ}_p \simeq \GL_n(\overline{\QQ}_p) 
$$

\begin{lemma}[Chenevier] \label{Chenevier}
    Assume that $\rho_{\pi,\lambda}$ is irreducible for all $\lambda$ and regular for at least one $\lambda$. Then,
    there exists a finite  extension $E'/\QQ(\pi)$ that is Galois over $\QQ$, such that for all but finitely many finite places $\lambda$ of $\QQ(\pi)$ and any place $\mu$ of $E'$ above $\lambda$, the central simple algebra $D_\lambda$ splits over $E'_{\mu}$. In particular, there exists a finite extension $E/\QQ(\pi)$ such that all representations $\rho_{\pi,\lambda}$ can be defined over $E$ (can be conjugated to have values in completions of $E$).  
\end{lemma}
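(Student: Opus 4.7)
The plan is to use a single Frobenius element with separable global characteristic polynomial to exhibit, for each finite place $\lambda$ of $\QQ(\pi)$, a maximal étale subalgebra of $D_\lambda$ which splits $D_\lambda$ after base change to one common global field $E'$.

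First I would pick a place $v$ of $K$ such that $\pi$ is unramified at $v$ and the characteristic polynomial $P_v(X) \in \QQ(\pi)[X]$ of $\mathrm{Frob}_v$ (well-defined by the compatibility of the system) is separable over $\QQ(\pi)$. The regularity of $\pi$ at $\lambda_0$ enters here: it forces distinct Hodge--Tate weights, and by purity for the compatible system (Deligne's Weil conjectures, in the form established for regular algebraic cuspidal $\pi$ by Clozel, Harris--Taylor, Shin, Caraiani \emph{et al.}), the Frobenius eigenvalues at every place of good reduction have pairwise distinct complex absolute values; hence $P_v$ is separable for every such $v$.

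Next, fix one such $v$ and let $E'$ be a finite Galois extension of $\QQ$ containing $\QQ(\pi)$ and all roots of $P_v$. For any finite place $\lambda$ of $\QQ(\pi)$ not dividing the residue characteristic of $v$, the representation $\rho_{\pi,\lambda}$ is unramified at $v$, so $\rho_{\pi,\lambda}(\mathrm{Frob}_v) \in D_\lambda^\times$ has reduced characteristic polynomial $P_v$ viewed in $\QQ(\pi)_\lambda[X]$. Separability of $P_v$ implies that the $\QQ(\pi)_\lambda$-subalgebra $L \subseteq D_\lambda$ generated by this element is isomorphic to $\QQ(\pi)_\lambda[X]/(P_v(X))$, a maximal commutative étale subalgebra of the central simple algebra $D_\lambda$ of degree $n$. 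For any place $\mu$ of $E'$ above $\lambda$, the polynomial $P_v$ splits completely in $E'_\mu$ (since $E'$ contains all its roots and the embedding $E'\hookrightarrow E'_\mu$ extends $\QQ(\pi) \xrightarrow{\lambda} \QQ(\pi)_\lambda$), so $L\otimes_{\QQ(\pi)_\lambda} E'_\mu \cong (E'_\mu)^n$. The $n$ nonzero orthogonal idempotents from this product algebra then lie inside the central simple $E'_\mu$-algebra $D_\lambda\otimes_{\QQ(\pi)_\lambda} E'_\mu$ of degree $n$, which is enough to force it to be isomorphic to $M_n(E'_\mu)$. Hence $D_\lambda$ splits over $E'_\mu$.

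The ``in particular'' statement follows by composing $\rho_{\pi,\lambda}$ with $D_\lambda^\times \hookrightarrow (D_\lambda \otimes_{\QQ(\pi)_\lambda} E'_\mu)^\times \cong \GL_n(E'_\mu)$, which conjugates $\rho_{\pi,\lambda}$ to land in $\GL_n(E'_\mu)$. Each remaining exceptional $\lambda$ (the finitely many places above the residue characteristic of $v$) is in any case defined over some finite extension of $\QQ(\pi)_\lambda$ because the image generates a finitely generated subalgebra of $M_n(\overline{\QQ_p})$; absorbing these extensions into $E'$ yields the final field $E$. The main obstacle is step one: establishing the separability of $P_v$. The purity argument above is the cleanest route given the regularity hypothesis; as a fallback one could use irreducibility plus Chebotarev, since the Zariski closure of the image is reductive (by Lie--Kolchin applied to its unipotent radical) and its regular-semisimple locus is then a nonempty Zariski open subset into which infinitely many Frobenius conjugacy classes equidistribute.
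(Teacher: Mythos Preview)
Your overall strategy---pick one unramified place $v$ with separable Frobenius characteristic polynomial $P_v$, take $E'$ to be its splitting field, and use the maximal \'etale subalgebra generated by $\rho_{\pi,\lambda}(\mathrm{Frob}_v)$ to split $D_\lambda$ over $E'_\mu$---is exactly the paper's approach, and your splitting argument via the idempotents is correct and equivalent to the paper's criterion that $D_\lambda$ splits over any field containing the $n$ distinct eigenvalues of some element.

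The genuine gap is in your separability step. Your primary argument is backwards: purity (when it is known) says the Frobenius eigenvalues are all Weil numbers of a \emph{single} weight $w$, so every complex embedding sends each eigenvalue to something of absolute value $q_v^{w/2}$. Purity gives \emph{equal} archimedean absolute values, not distinct ones, and so cannot possibly separate the roots of $P_v$. (Purity is in any case not known for the HLTT/Scholze representations attached to non-polarizable $\pi$ over totally real fields, so the references you cite do not apply here.) Your fallback is closer to the truth but still incomplete: the regular-semisimple locus of the Zariski closure $G$ of the image is indeed nonempty and open, but ``regular semisimple in $G$'' is not the same as ``has $n$ distinct eigenvalues in $\GL_n$,'' which is what you need. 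This is precisely where the hypothesis ``regular for at least one $\lambda$'' must be used: the Sen operator of a Hodge--Tate representation with distinct Hodge--Tate weights is a regular semisimple element of $\mathfrak{gl}_n$ lying in $\mathrm{Lie}(G)$, so the $\GL_n$-regular locus meets $G$ in a nonempty open set, and then Chebotarev finishes. This is the content of \cite[Lemma~5.3.1]{barnet2014potential}, which the paper cites at exactly this point.
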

\begin{proof} First recall that a central simple algebra $D_\lambda \subset M_n(\overline{\QQ}_p)$ splits in an extension $M$ of $E_\lambda$ if and only if it contains an element with $n$ pairwise distinct eigenvalues in $M$. Let $v$ be a place of $K$ at which $\pi$ is unramified and $f^{(v)}(x) \in \QQ(\pi)[x]$ be the characteristic polynomial of the Frobenius element at $v$, which is independent of $\lambda$. Then as in the proof of \cite[Lemma 5.3.1]{barnet2014potential}, choosing a $\lambda$ for which $\rho_{\pi,\lambda}$ is regular, we get that for infinitely many places $v$ one has that $f^{(v)}(x)$ has distinct roots. This shows that if $E'$ is the splitting field of $f^{(v)}(x)$, then $D_\lambda$ splits over the completion of $E'$ at any finite place coprime to $v$ and the level of $\pi$. Since it clearly splits over some finite extension of $E_\lambda$ as well, we can find a number field $E$ which splits all $D_\lambda$'s at the same time. At the end we take the Galois closure over $\QQ$. 
    
\end{proof}
\begin{rem}\label{remRou}
    {A natural question that arises after this lemma is if one should expect $D_\lambda$'s to come from a global object $D/\QQ(\pi)$. We will discuss this more in the  section \ref{Section-Mumford-Tate}. In particular in the special case of $n\leq 3$ this follows from our results, existence of a motive associated to $\pi$ and the Mumford-Tate conjecture for that motive.}
\end{rem}

From now on we take $E$ to be the number field coming from the last lemma and we take our Galois representations to have values in $\GL_n(E_\mathfrak{p})$ for finite primes $\mathfrak{p}$ of $E$. So we are in the setting of section \ref{Section-Image of Galois} and we define:
$$
\prod_{\fp | p} \rho_{\pi,\fp} =
\rho_{\pi,p} : G_{K} \rightarrow \mathrm{GL}_n
(E_p)=\prod_{\fp | p} \mathrm{GL}_n
(E_\fp)
$$
where $E_p = E \otimes_{\QQ} \QQ_p \cong \prod_{\fp | p} E_{\fp}$ as usual. 

From now on, we assume that $\pi$ is neither self-twist nor essentially self-dual in the $n>2$ case. Then it makes sense to talk about $E$-extra-twists of $\pi$. Since $E$ is fixed we will drop it from the notation from now on. By multiplicity one, inner-twists of $\pi$ and $\rho_{\pi, p}$ agree (we are using class field theory to identify the characters). So let $\Gamma$, $\Gi$, $\Go$, $F$ and $\Fi$ be as in section \ref{Section-Extra-Twists}. 

The determinant of $\rho_p$ is given by $\omega \cdot \epsilon_p^m$. To apply the results of the last section, we first need to kill the determinant. This is always possible after a finite extension. In fact, after a finite extension the cyclotomic character will have values in $1+p\ZZ_p$ and then we can use the $p$-adic logarithm. Therefore, 
there exists a finite extension $M/K$ such that $\epsilon_p|_{G_{M}}$ has an $n$'th root. We fix one of these characters and denote it by $\epsilon_p^{1/n}$. Now, we enlarge $M$ to trivialize $\omega$ if necessary. Then 
$$\rho'_{\pi,p} := \rho_{\pi,p}|_{G_M} \otimes \epsilon_p^{-m/n}:G_M\rightarrow \GL_n(E_p)$$ has trivial determinant. This is the Galois representation that we will apply our results from the last section to. Notice that extra-twists of $\rho'_{\pi,p}$ and $\rho_{\pi,p}$ are the same. More precisely, the characters might have changed after the twist but the group $\Gamma \subseteq \mathrm{Aut}(E)$ has not. It is also not hard to see how the Lie algebra of the image changes:
\begin{lemma}\label{twistLieAlg}
    Assume that $\rho'_{\pi,p}$ is valid. Let $\mathfrak{g'}:=\mathrm{Lie}(\rho'_{\pi,p}(G_M))$ and $\mathfrak{g}=\mathrm{Lie}(\rho_{\pi,p}(G_L))$. Then $\mathfrak{g}^{\mathrm{der}}=\mathfrak{g}'$.
\end{lemma}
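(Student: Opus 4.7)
Since the Lie algebra of a $p$-adic analytic representation is unchanged under restriction to an open subgroup, I may replace both $G_L$ and $G_M$ by the common open subgroup $G_{LM}$ without affecting $\mathfrak{g}$ or $\mathfrak{g}'$. So the problem reduces to comparing the Lie algebras of $\rho_{\pi,p}|_{G_{LM}}$ and $\rho'_{\pi,p}|_{G_{LM}}$, two representations that differ by the character $\epsilon_p^{m/n}$. The natural way to keep both in view at once is to form the product
$$\phi := (\epsilon_p^{m/n}, \rho'_{\pi,p}) : G_{LM} \longrightarrow \QQ_p^\times \times \SL_n(E_p),$$
so that $\rho_{\pi,p}|_{G_{LM}}$ is the composition of $\phi$ with the scalar-multiplication map $\mu:(a,X)\mapsto aX$. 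The differential $d\mu$ at the identity is the $\QQ_p$-linear isomorphism $(c,X)\mapsto cI+X$ of $\QQ_p\oplus\mathfrak{sl}_n(E_p)$ onto $\mathfrak{gl}_n(E_p)$, and therefore
$$\mathfrak{g} \;=\; \{\, cI + X : (c,X)\in \mathfrak{h}\,\}, \qquad \mathfrak{h} := \mathrm{Lie}\,\phi(G_{LM}) \subseteq \QQ_p \oplus \mathfrak{g}'.$$

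The heart of the argument is to identify $\mathfrak{h}$ with the full product $\QQ_p\oplus\mathfrak{g}'$, via the Lie-algebra analogue of Goursat's lemma. The projection of $\mathfrak{h}$ onto the second factor is $\mathfrak{g}'$ by definition; the projection onto the first is $\mathrm{Lie}\,\epsilon_p^{m/n}(G_{LM})$, which is all of $\QQ_p$ since $\epsilon_p$ has infinite image in $1+p\ZZ_p$. Let $I_1\subseteq\QQ_p$ and $I_2\subseteq\mathfrak{g}'$ be the kernels of the two projections; Goursat gives an isomorphism $\QQ_p/I_1 \cong \mathfrak{g}'/I_2$. Now $\mathfrak{g}'$ is the Lie algebra of the algebraic group $H/F_p$ of Proposition \ref{LieAlgImage}, which is a form of $\SL_n$ and therefore semisimple; hence $\mathfrak{g}'/I_2$ is semisimple while $\QQ_p/I_1$ is abelian, forcing both to vanish. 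So $I_1=\QQ_p$, i.e. $(c,0)\in\mathfrak{h}$ for all $c\in\QQ_p$, and subtracting shows $I_2=\mathfrak{g}'$, giving $\mathfrak{h}=\QQ_p\oplus\mathfrak{g}'$.

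Combining the two displays yields $\mathfrak{g}=\QQ_p\cdot I + \mathfrak{g}'$ inside $\mathfrak{gl}_n(E_p)$. Since the scalars $\QQ_p\cdot I$ are central in $\mathfrak{gl}_n(E_p)$, they contribute nothing to any bracket, so
$$\mathfrak{g}^{\mathrm{der}} \;=\; [\mathfrak{g},\mathfrak{g}] \;=\; [\mathfrak{g}',\mathfrak{g}'] \;=\; \mathfrak{g}',$$
the last equality using that $\mathfrak{g}'$ is semisimple (hence perfect). The only step that is not bookkeeping is the Goursat step, and the key input there is the semisimplicity of $\mathfrak{g}'$, which follows from the validity hypothesis through Proposition \ref{LieAlgImage}.
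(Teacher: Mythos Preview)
Your argument is correct, with one harmless misstatement: the differential $d\mu:(c,X)\mapsto cI+X$ is injective but not an isomorphism onto $\mathfrak{gl}_n(E_p)$ --- its image is only $\QQ_p\cdot I\oplus\mathfrak{sl}_n(E_p)$, which is a proper $\QQ_p$-subspace when $E_p\ne\QQ_p$ (compare dimensions). Fortunately only injectivity is needed, so the identification $\mathfrak{g}=\{cI+X:(c,X)\in\mathfrak{h}\}$ stands and the rest goes through.

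The paper reaches the same conclusion more directly, without Goursat. It simply observes that since $\rho_{\pi,p}$ and $\rho'_{\pi,p}$ differ by a central character, the image groups satisfy $G\subseteq G'\cdot Z$ and $G'\subseteq G\cdot Z$ (with $Z$ the center of $\GL_n(E_p)$), hence $\mathfrak{g}\subseteq\mathfrak{g}'+\mathfrak{z}$ and $\mathfrak{g}'\subseteq\mathfrak{g}+\mathfrak{z}$. Taking derived subalgebras of each containment --- using that $\mathfrak{z}$ is central and that $\mathfrak{g}'=[\mathfrak{g}',\mathfrak{g}']$ by semisimplicity (Proposition~\ref{LieAlgImage}) --- gives $\mathfrak{g}^{\mathrm{der}}\subseteq\mathfrak{g}'$ and $\mathfrak{g}'\subseteq\mathfrak{g}^{\mathrm{der}}$ in one line. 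Your route through the product map $\phi$ and Goursat is a faithful, more hands-on unwinding of the same mechanism; it has the incidental bonus of identifying $\mathfrak{g}$ itself as $\QQ_p\cdot I+\mathfrak{g}'$, not just its derived subalgebra, whereas the paper's argument trades that extra information for brevity.
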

\begin{proof}
Let $G=\rho_{\pi,p}(G_M)$ and $G'=\rho'_{\pi,p}(G_M)$ and let $Z$ be the center of $\GL_n(E_p)$. Then we clearly have $G \subseteq G'\cdot Z$ and $G' \subseteq G\cdot Z$. Taking the Lie algebras we find that $\mathfrak{g} \subseteq \mathfrak{g'}+\mathfrak{z}$ and $\mathfrak{g'} \subseteq \mathfrak{g}+\mathfrak{z}$ where $\mathfrak{z}$ is the Lie algebra of the center. Since $\mathfrak{g'}$ is semi-simple by proposition \ref{LieAlgImage}, this implies that $\mathfrak{g}$ is reductive and $\mathfrak{g}^{\mathrm{der}}=\mathfrak{g'}$.
\end{proof}

In the case that $\rho'_{\pi,p}$ is valid, let $H_p/F_p$ be the semi-simple group from corollary \ref{groupH} applied to $\rho'_{\pi,p}$. Then we have:
\begin{prop}\label{validBigImage}
    If $\rho'_{\pi,p}$ is valid then there exists a finite extension $L$ of $K$ such that such that $\rho_{\pi,p}(G_L)$ is contained and $p$-adically open in $H_p(F_p)\cdot\QQ_p^{\times} \subseteq \GL_n(E_p)$. 
\end{prop}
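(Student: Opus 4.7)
The plan is to reduce this directly to Corollary~\ref{bigImCor} applied to the auxiliary representation $\rho'_{\pi,p}$, and then translate back by absorbing the twist $\epsilon_p^{m/n}$ into the scalar factor $\QQ_p^\times$. The heart of the proof is already in Proposition~\ref{LieAlgImage}; only a Lie-algebra comparison for the determinant direction remains.

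First I would invoke Corollary~\ref{bigImCor} for the valid representation $\rho'_{\pi,p}: G_M \rightarrow \SL_n(E_p)$. This gives a finite extension $L/M$ (namely the one cutting out $\bigcap_{\sigma \in \Gamma} \ker \chi_\sigma$) such that $\rho'_{\pi,p}(G_L)$ is an open $p$-adic Lie subgroup of $H_p(F_p)$. After enlarging $L$ by a finite amount I may assume $\omega|_{G_L}$ is trivial and that the chosen $n$-th root $\epsilon_p^{1/n}$ is defined on $G_L$, so that the identity $\rho_{\pi,p}|_{G_L} = \rho'_{\pi,p}|_{G_L} \otimes \epsilon_p^{m/n}|_{G_L}$ holds in $\GL_n(E_p)$. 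Since $\epsilon_p^{m/n}|_{G_L}$ takes values in $\QQ_p^\times$ and $\rho'_{\pi,p}(G_L) \subseteq H_p(F_p)$, the image $\rho_{\pi,p}(G_L)$ lies inside $H_p(F_p) \cdot \QQ_p^\times$ as required.

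For openness I would compare the $\QQ_p$-Lie algebras inside $\mathfrak{gl}_n(E_p)$. Because $\mathfrak{h} \subseteq \mathfrak{sl}_n(E_p)$ is traceless while $\QQ_p \cdot I_n$ is scalar, the $p$-adic Lie group $H_p(F_p) \cdot \QQ_p^\times$ has Lie algebra $\mathfrak{h} \oplus \QQ_p \cdot I_n$ as a direct sum. Setting $\mathfrak{g} := \mathrm{Lie}(\rho_{\pi,p}(G_L))$, Lemma~\ref{twistLieAlg} combined with Proposition~\ref{LieAlgImage} gives $\mathfrak{g}^{\mathrm{der}} = \mathrm{Lie}(\rho'_{\pi,p}(G_L)) = \mathfrak{h}$, so $\mathfrak{g}$ is reductive, contains $\mathfrak{h}$, and is contained in $\mathfrak{h} \oplus \QQ_p \cdot I_n$. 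To pick up the scalar direction, I would observe that $\det \rho_{\pi,p}|_{G_L} = \epsilon_p^m|_{G_L}$ has open image in $1 + p\ZZ_p$, so the trace map $\mathrm{tr} : \mathfrak{g} \rightarrow \QQ_p$ is surjective; since $\mathrm{tr}$ annihilates $\mathfrak{h}$ and is $n$ times the projection to the scalar factor, this forces $\mathfrak{g} = \mathfrak{h} \oplus \QQ_p \cdot I_n$. Equality of Lie algebras then yields the desired $p$-adic openness of $\rho_{\pi,p}(G_L)$ in $H_p(F_p) \cdot \QQ_p^\times$.

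The genuinely hard work — namely, identifying $\mathfrak{g}^{\mathrm{der}}$ with the full group of invariants $\mathfrak{h}$ through the Goursat argument in Proposition~\ref{LieAlgImage} — has already been carried out. So I expect no real obstacle at this stage; the only point to be careful about is bookkeeping around the finite extension $L$ (ensuring simultaneously that it kills all extra-twist characters, trivializes $\omega$, and supports $\epsilon_p^{1/n}$) and checking that the central direction of $\mathfrak{g}$ lands in $\QQ_p \cdot I_n$ rather than in a larger scalar subspace of $E_p \cdot I_n$, which is guaranteed by the fact that $\epsilon_p^{m/n}$ is a $\QQ_p^\times$-valued character.
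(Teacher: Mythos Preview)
Your proposal is correct and follows essentially the same approach as the paper's proof: containment via the twist identity $\rho_{\pi,p}|_{G_L} = \rho'_{\pi,p}|_{G_L} \otimes \epsilon_p^{m/n}$ together with Corollary~\ref{groupH}/\ref{bigImCor}, and openness via the Lie-algebra comparison from Lemma~\ref{twistLieAlg} and Proposition~\ref{LieAlgImage} plus the observation that the determinant has open image in $\QQ_p^\times$. You spell out the scalar-direction argument (via the trace map) more carefully than the paper does, but the underlying strategy is identical.
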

\begin{proof}
    Since the image of $\rho'_{\pi,p}$ is contained in $H_p(F_p)$ after a finite extension, and the image of $\epsilon_p^{1/n}$ is in $\ZZ_p^{\times}$, the image of $\rho_{\pi,p}$ is contained in $H_p(F_p)\cdot\QQ_p^{\times}$ after a finite extension. The image is open in $H_p \subseteq \SL_n(E_p)$ by lemma \ref{twistLieAlg} and the image of the determinant is open in $\QQ_p^{\times}$, so we are done.  
\end{proof}
This in particular implies that the connected component of the $\QQ_p$-Zariski closure of the image is the algebraic group $(\mathrm{Res}^{F_p}_{\QQ_p}H_p)\cdot\mathbb{G}_{m,\QQ_p}$. Now, we only need to check validity of $\rho'_{\pi,p}$ to deduce that the image is big.

\subsection{The $\GL_2$-case}
As we mentioned in the introduction, essentially everything is known in this case by the work of Ribet \cite{ribet1980twists}, Momose \cite{momose1981adic} and Nekovar \cite{nekovavr2012level}. We repeat the arguments for the sake of completeness. In this case all representations are essentially self dual so there are no outer-twists and $\Gamma = \Gi$. One can in fact take $E=\QQ(\pi)$ (then $\Gamma$ would be abelian), but it is not necessary for our discussion. 
Recall the we assumed that $\pi$ is not self-twist, i.e. does not satisfy $\pi \simeq \pi \otimes \chi$ for $\chi \neq 1$. In this case it is more common to say that $\pi$ does not have complex multiplication (CM).  

\begin{prop}
    Assume that $n=2$ and $\pi$ does not have CM. Then $\rho'_{\pi,p}$ is valid.
\end{prop}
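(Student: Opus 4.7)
The plan is to verify the five conditions of Definition \ref{valid} in turn, with the last (Lie algebra) condition being the substantive one.

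The first two conditions are essentially bookkeeping: continuity and unramifiedness outside a finite set of places are preserved under the restriction $(\cdot)|_{G_M}$ and the continuous twist by $\epsilon_p^{-m/n}$, and the coefficients of $\mathrm{CharPoly}(\rho'_{\pi,p}(\mathrm{Frob}_v))$ at unramified $v$ of $M$ differ from those of $\rho_{\pi,p}$ by scalar multiples of $\epsilon_p^{-m/n}(\mathrm{Frob}_v)$, so they lie in $E$ after possibly enlarging $E$ by finitely many $n$-th roots of unity and of $q_v$'s as needed (and re-taking the Galois closure). The fourth condition (trivial determinant) holds by construction of $\rho'_{\pi,p}$. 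The third condition (no self-twist) will drop out of strong irreducibility: if $\rho'_{\pi,\lambda} \otimes \chi \cong \rho'_{\pi,\lambda}$ for $\chi \neq 1$, then $\chi$ has order $2$ and $\rho'_{\pi,\lambda}$ becomes reducible after restriction to the quadratic extension cut out by $\chi$, contradicting strong irreducibility.

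Thus the whole proof reduces to verifying the last condition, namely that the $\overline{\QQ}_p$-Lie algebra $\mathfrak{g}_\lambda$ of $\rho'_{\pi,\lambda}(G_M)$ equals $\mathfrak{sl}_2(\overline{\QQ}_p)$. By Remark \ref{RemarkValid}(4) this only needs to be checked at a single $\lambda$. My plan is a three-step Lie algebra argument:
\begin{enumerate}
\item[(a)] Show strong irreducibility: since $\pi$ does not have CM, the Hilbert modular result of Ribet--Momose--Nekov\'a\v{r} tells us that $\rho_{\pi,\lambda}|_{G_L}$ is irreducible for every finite extension $L/K$. Irreducibility is preserved by restriction to $G_M$ and by the character twist, so $\rho'_{\pi,\lambda}|_{G_{L}}$ is irreducible for every $L \supseteq M$ finite.
\item[(b)] Translate to the Lie algebra: since the Lie algebra of $\rho'_{\pi,\lambda}(G_M)$ agrees with that of its restriction to any finite-index open subgroup, the tautological action of $\mathfrak{g}_\lambda$ on $\overline{\QQ}_p{}^2$ has no proper invariant subspace.
\item[(c)] Apply the classification of Lie subalgebras of $\mathfrak{sl}_2$: every proper Lie subalgebra of $\mathfrak{sl}_2(\overline{\QQ}_p)$ is contained in some Borel subalgebra (the options being $0$, a $1$-dimensional semisimple or nilpotent line, or a full Borel), and all of these stabilize a line. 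Combined with (b), this forces $\mathfrak{g}_\lambda = \mathfrak{sl}_2(\overline{\QQ}_p)$.
\end{enumerate}

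The only step that is not a formal manipulation is (a), the strong irreducibility statement. This is the genuine input and is where non-CM is used in an essential way; it is by now classical for Hilbert modular Galois representations, so in the write-up I would simply cite \cite{nekovavr2012level} (with \cite{ribet1980twists} and \cite{momose1981adic} as the original $K = \QQ$ cases). Once (a) is in hand, the rest of the argument is short and entirely internal to $\mathfrak{sl}_2$.
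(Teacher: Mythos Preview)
Your proposal is correct, and the Lie-algebra endgame (steps (b)--(c)) is essentially the same as the paper's: strong irreducibility forces the tautological $\mathfrak{g}_\lambda$-action on $\overline{\QQ}_p{}^2$ to be irreducible, and then one observes that $\mathfrak{sl}_2$ has no proper irreducible subalgebra. The paper phrases this last step via the derived subalgebra (``the only irreducible semi-simple Lie-subalgebra of $\mathfrak{sl}_2$ is itself''), while you phrase it via Borels; these are equivalent one-liners.

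The genuine difference is in step (a). You import strong irreducibility wholesale from \cite{ribet1980twists}, \cite{momose1981adic}, \cite{nekovavr2012level}. The paper instead gives a short self-contained argument: if $\rho_{\pi,\lambda}|_{G_L}$ were reducible for some finite Galois $L/K$, then (using that $\rho_{\pi,\lambda}$ is Hodge--Tate with \emph{distinct} weights) the two character summands are distinct, the $G_K$-action permutes them transitively, and one concludes $\rho_{\pi,\lambda} \simeq \mathrm{Ind}_K^{K'}(\chi_1)$ for a quadratic $K'/K$, forcing $\pi$ to be CM. This buys two things. First, it keeps the $\GL_2$ section self-contained and in exact parallel with the paper's $\GL_3$ argument (where the same induction-from-regularity trick is used). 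Second, it avoids a mild circularity: the whole point of this subsection is to recover Corollary~\ref{bigGL2}, which is essentially the content of the Ribet--Momose--Nekov\'a\v{r} theorems you are citing; invoking those theorems to establish the key hypothesis feeding into Corollary~\ref{bigGL2} is logically fine but stylistically backwards. Your approach, on the other hand, is shorter to write and perfectly acceptable if one is content to treat the $\GL_2$ case as already settled in the literature.
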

\begin{proof}
    Most of the properties are clear from the analogous properties for $\rho_{\pi,p}$. We only need to check strong irreducibility and compute the $\overline{\QQ}_p$-Lie algebra. 

    It is known that $\rho_{\pi,\lambda}$ is irreducible by a result of Ribet \cite{ribet2006galois}. It is also known that it is de Rham and the Hodge-Tate weights are distinct. 
    Strong irreducibility of $\rho'_{\pi,\lambda}$ and $\rho_{\pi,\lambda}$ are clearly equivalent. Assume that $\rho_{\pi,\lambda}|_{G_L}$ is reducible for some finite Galois extension $L/K$. Since $\rho_{\pi,\lambda}$ is semi-simple, so is its restriction to $G_L$ and we have $\rho_{\pi,\lambda}|_{G_L} \simeq \chi_1 \oplus \chi_2$. Since $\rho_{\pi,\lambda}$ is Hodge-Tate with distinct Hodge-Tate weights, so is $\rho_{\pi,\lambda}|_{G_L}$. This implies that $\chi_1 \neq \chi_2$. Let $K'$ be the fixed field of the stabilizer of $\chi_1$. Then it is clearly a degree 2 extension of $K$ and if $\mathrm{Gal}(K'/K)=\{1,\sigma \}$ then $\chi_2 = \sigma \chi_1$ (otherwise $\chi_1$ would be a direct summand of $\rho_{\pi,\lambda}$). This means that $\rho_{\pi,\lambda} \simeq \mathrm{Ind}^{K'}_K(\chi_1)$ which implies that $\pi$ is CM. Therefore $\rho_{\pi,\lambda}$ and hence $\rho'_{\pi,\lambda}$ are strongly irreducible. 

    Now let $\mathfrak{g}^{\mathrm{der}}_\lambda \subseteq \mathfrak{sl}_2(\overline{\QQ}_p)$ be the derived part of the $\overline{\QQ}_p$-Lie algebra of the image of $\rho'_{\pi,\lambda}:G_M \rightarrow \SL_2(\overline{\QQ}_p)$. Since $\rho'_{\pi,\lambda}$ is strongly irreducible, the irreducibility holds infinitesimally, i.e. $\mathfrak{g}_\lambda \subseteq \mathfrak{gl}_2(\overline{\QQ}_p)$ is an irreducible representation. This means that the centralizer of $\mathfrak{g}$ and hence its center is in the center of $\mathfrak{gl}_2(\overline{\QQ}_p)$ which implies that $\mathfrak{g}^{\mathrm{der}}_\lambda \subseteq \mathfrak{sl}_2(\overline{\QQ}_p)$ is also irreducible. The only irreducible semi-simple Lie-subalgebra of $\mathfrak{sl}_2$ is itself so we are done. 
    
\end{proof}
\noindent Now from proposition \ref{validBigImage} it follows:
\begin{cor}\label{bigGL2}
    Let $\pi$ be a regular, algebraic, cuspidal automorphic representation of $\GL_2(\AA_K)$ that doesn't have complex multiplication and let $F$ be the field fixed by inner-twists. Then there exists an inner form $H_p$ of $\SL_2$ over $F_p$ and a finite extension $L$ of $K$ such that the image of $\rho_{\pi,p}(G_L)$ is contained and $p$-adically open in $H_p(F_p)\cdot\QQ_p^{\times} \subseteq \GL_n(E_p)$.
\end{cor}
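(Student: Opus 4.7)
The plan is to assemble this corollary directly from the two preceding results. The previous proposition establishes that, under the non-CM hypothesis, the twisted representation $\rho'_{\pi,p}$ is valid in the sense of Definition \ref{valid}. I would then invoke Proposition \ref{validBigImage} verbatim: validity produces a semi-simple group $H_p/F_p$ (built in Corollary \ref{groupH} as $(\mathrm{Res}^{E_p}_{F_p}\SL_2)^{tw_f(\Gamma)}$) and a finite extension $L/K$ such that $\rho_{\pi,p}(G_L)$ sits in $H_p(F_p)\cdot\QQ_p^\times$ and is $p$-adically open there. So the only point that needs a separate remark is why $H_p$ is already an \emph{inner} form of $\SL_2$, rather than something that merely becomes inner after a quadratic base change.

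For this I would appeal to Corollary \ref{split}, which says that $H$ becomes an inner form after base change to $F^\mathrm{inn}_p$. The key observation specific to $\GL_2$ is that every $2$-dimensional representation is essentially self-dual, so the outer-twist condition in Definition \ref{extra-def} is vacuous: there are no outer-twists, $\Go$ is empty, and hence $\Gamma = \Gi$. By Lemma \ref{BasicTwist}(iv) together with the definitions at the end of Section \ref{DefTwist}, this forces $F = \Fi$, so $F^\mathrm{inn}_p = F_p$ and Corollary \ref{split} already gives that $H_p$ itself is an inner form of $\SL_2$ (split over $E_p$). One can phrase this even more concretely: the cocycle $f\colon\Gamma\to\mathrm{Aut}_{E_p}(\SL_2)$ constructed in Section 3.2 is, by construction, a cocycle of inner automorphisms $\mathrm{ad}(\alpha_\sigma)$, since the outer-twist case never arises.

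There is no real obstacle here beyond bookkeeping; the substance was done in the validity proposition (which itself rested on Ribet's irreducibility and the classification of semisimple subalgebras of $\mathfrak{sl}_2$) and in Proposition \ref{validBigImage}. The $\QQ_p^\times$ factor is accounted for by the twist $\epsilon_p^{-m/n}$ used to trivialize the determinant, as Lemma \ref{twistLieAlg} already isolated the semisimple part of the Lie algebra, and the determinant of $\rho_{\pi,p}$ is open in $\QQ_p^\times$ because $\epsilon_p$ is. Combining these two openness statements gives openness in $H_p(F_p)\cdot\QQ_p^\times$, completing the argument.
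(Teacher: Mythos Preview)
Your proposal is correct and follows essentially the same approach as the paper: the paper simply states that the corollary follows from Proposition \ref{validBigImage} once validity has been established by the preceding proposition, with the inner-form claim already implicit in the remark at the start of the $\GL_2$ subsection that there are no outer-twists and $\Gamma=\Gi$. Your explicit justification of the inner-form part via Corollary \ref{split} (or equivalently via the cocycle $f$ landing in inner automorphisms) is a welcome elaboration but not a departure from the paper's line of argument.
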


In the work of Ribet, Momose and Nekovar, they construct an Azumaya algebra $D_p/F_p$ which contains the image. The relation to the last corollary is that if $D_p^\times$ is the algebraic group of units of $D_p$ then $(D_p^{\times})^\mathrm{der}=H_p$. In the case that $\pi$ has parallel weight 2, so we expect an abelian variety to be associated to $\pi$, this algebra $D_p$ is closely related to the endomorphism ring of that abelian variety. We will explain the relation of these results to the Mumford-Tate conjecture for that abelian variety in section \ref{Section-Mumford-Tate}.

\subsection{The $\GL_3$-case}
In this section we prove our main result. 
The CM-case for $n=2$ can be thought of as the form essentially coming from $\mathrm{GL}_1$ by induction. Similarly, in the $n=3$ case we need to first exclude all the cases that $\pi$ comes from smaller groups via a Langlands transfer, in which case the image would be easy to describe by previous results.
It turns out that we only need to exclude the following two cases to be able to describe the image:
\begin{enumerate}
    \item $\pi$ is essentially $\mathrm{sym}^2$, i.e. there exists an automorphic representation $\theta$ of $\mathrm{GL}_2(\AA_K)$ and a Hecke character $\eta$ such that 
    $\pi = \mathrm{sym}^2(\theta) \otimes \eta$.
    \item $\pi$ is an induction of a character, i.e. there exist a degree 3 extension $L/K$ and a Hecke character $\eta$ of $\AA_L$
    such that $\pi=\mathrm{Ind}^L_K (\eta)$.
\end{enumerate}

Notice that Langlands functoriality is known for $\mathrm{sym}^2:\mathrm{GL}_2\rightarrow \mathrm{GL}_3$ by \cite{gelbart1978relation} and automorphic base change is known for prime degree extensions by \cite{arthur1989simple}.
In the first case above, determining the image reduces to the $\mathrm{GL}_2$ case and in the second case to the $\mathrm{GL}_1$ case. 
The next two lemmas give equivalent classifications of the above cases and show that they follow from our primary assumptions on $\pi$ (not essentially self-dual and not self-twist) that are needed to define extra-twists to begin with.
\begin{lemma}
    $\pi$ is essentially $\mathrm{sym}^2$ if and only if there exist a Hecke character $\chi$ such that $\pi = \pi^{\vee}\otimes \chi$.
\end{lemma}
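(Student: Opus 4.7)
The plan is to prove the two implications separately. The forward direction is a direct calculation, while the reverse direction rests on the Gelbart--Jacquet characterization of the $\mathrm{sym}^2$ lift via a pole of an L-function.

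For the easy direction, assume $\pi \simeq \mathrm{sym}^2(\theta) \otimes \eta$ for some cuspidal $\theta$ on $\GL_2(\AA_K)$ with central character $\omega_\theta$. The standard identity $\theta^\vee \simeq \theta \otimes \omega_\theta^{-1}$ together with the compatibility $\mathrm{sym}^2(\theta^\vee) \simeq \mathrm{sym}^2(\theta)^\vee$ gives
\[
\mathrm{sym}^2(\theta)^\vee \simeq \mathrm{sym}^2(\theta \otimes \omega_\theta^{-1}) \simeq \mathrm{sym}^2(\theta) \otimes \omega_\theta^{-2},
\]
hence $\pi^\vee \simeq \pi \otimes \omega_\theta^{-2}\eta^{-2}$, so the character $\chi := \omega_\theta^{2}\eta^{2}$ exhibits the essential self-duality. (As a consistency check, this gives $\chi^3 = \omega_\theta^{6}\eta^{6} = \omega_\pi^{2}$, matching the identity of central characters forced by $\pi \simeq \pi^\vee \otimes \chi$.)

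For the reverse direction, suppose $\pi \simeq \pi^\vee \otimes \chi$. The main identity is the Rankin--Selberg factorization
\[
L(s,\pi \times (\pi \otimes \chi^{-1})) = L(s,\pi,\mathrm{sym}^2 \otimes \chi^{-1}) \cdot L(s,\pi,\wedge^2 \otimes \chi^{-1}).
\]
Using the essential self-duality, the left-hand side equals $L(s,\pi \times \pi^\vee)$, which has a simple pole at $s=1$ by cuspidality of $\pi$. On the right, the exterior-square factor simplifies: from $\wedge^2 \mathrm{Std}_3 \simeq \mathrm{Std}_3^\vee \otimes \det$ and $\pi^\vee \simeq \pi \otimes \chi^{-1}$, one gets $L(s,\pi,\wedge^2 \otimes \chi^{-1}) = L(s,\pi \otimes \omega_\pi \chi^{-2})$, a standard L-function of a cuspidal representation of $\GL_3$, hence holomorphic and non-vanishing at $s=1$. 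Therefore the pole is carried by $L(s,\pi,\mathrm{sym}^2 \otimes \chi^{-1})$, and the Gelbart--Jacquet characterization of the symmetric-square lift \cite{gelbart1978relation} identifies $\pi$ as essentially $\mathrm{sym}^2$.

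The main obstacle is the final application of Gelbart--Jacquet, which is classically stated for strictly self-dual representations, while here we only have essential self-duality. One way around this is to twist $\pi$ by a Hecke character $\mu$ with $\mu^2 = \chi$, but such a square root need not exist globally. A cleaner alternative, available in our setting since $K$ is totally real and $\pi$ is regular algebraic, is to pass to the associated Galois representation $\rho_{\pi,p}$ (Harris--Lan--Taylor--Thorne / Scholze): the relation $\rho_{\pi,p} \simeq \rho_{\pi,p}^\vee \otimes \chi$ produces a non-degenerate $G_K$-equivariant pairing $V \otimes V \to E_p(\chi)$, and because $\dim V = 3$ is odd no non-degenerate alternating form exists, so this pairing is necessarily symmetric. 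Hence the image of $\rho_{\pi,p}$ lies in the similitude orthogonal group $\mathrm{GO}_3$, whose connected component is isogenous to $\GL_2$ via $\mathrm{sym}^2$, exhibiting $\rho_{\pi,p}$ as a symmetric square of a two-dimensional Galois representation up to character twist. Matching Satake parameters at unramified places then descends this to the desired automorphic $\theta$ on $\GL_2(\AA_K)$ with $\pi \simeq \mathrm{sym}^2(\theta) \otimes \eta$.
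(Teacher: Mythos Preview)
Your forward direction is fine and matches the paper's one-line observation that $\GL_2$-representations are essentially self-dual.

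For the reverse direction there is a genuine gap, and it stems from a missed algebraic identity. You correctly note (as a ``consistency check'') that comparing central characters in $\pi \simeq \pi^\vee \otimes \chi$ forces $\chi^3 = \omega_\pi^2$. But this relation \emph{produces} the square root you claim need not exist:
\[
\chi \;=\; \chi^3 \cdot \chi^{-2} \;=\; \omega_\pi^{2}\,\chi^{-2} \;=\; (\omega_\pi\,\chi^{-1})^{2}.
\]
This is precisely the paper's argument: setting $\mu = \omega_\pi\,\chi^{-1}$ one twists $\pi$ to a genuinely self-dual cuspidal representation of $\GL_3$, and then the result is Ramakrishnan's theorem on self-dual cusp forms on $\GL(3)$ \cite{ramakrishnan2014exercise}. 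With this in hand your L-function set-up becomes unnecessary (though the factorization and the identification of the exterior-square factor are correct).

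Your proposed workaround on the Galois side does not close the gap. Landing $\rho_{\pi,p}$ inside $\mathrm{GO}_3$ via the symmetric pairing and recognizing it as $\mathrm{sym}^2$ of some two-dimensional $p$-adic representation $\sigma$ (up to twist) is fine, but the final sentence --- ``matching Satake parameters at unramified places then descends this to the desired automorphic $\theta$'' --- is exactly the step that requires a modularity theorem for $\sigma$. There is no general mechanism that manufactures a cuspidal automorphic $\theta$ on $\GL_2(\AA_K)$ from a two-dimensional Galois representation and a list of Frobenius eigenvalues. The paper does remark, just after this lemma, that a Galois-side proof is possible by analysing the projective image; but that route still feeds back into the automorphic descent in the self-dual case rather than bypassing it.
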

\begin{proof}
    Since $\mathrm{GL}_2$ representation are essentially self-dual the "only if" part is clear. Now assume $\pi = \pi^{\vee}\otimes \chi$ and let $\omega$ be the central character of $\pi$. Taking the central characters of both sides we have $\chi^3 = \omega^2$. So $\chi = (\omega \chi^{-1})^2$ has a square root and by twisting out this square root we can assume that $\pi$ is self-dual. Now the result follows from \cite{ramakrishnan2014exercise}.
\end{proof}

\begin{rem}
    Since we assumed $K$ is totally real and $\pi$ is cuspidal regular algebraic and hence the existence of the associated Galois representations is known, one can equivalently work with the associated Galois representation since strong multiplicity one is known for $\mathrm{GL}_n$. Then one can give a different proof in the Galois side by investigating the projective image of the representation.  
\end{rem}
We also need the following lemma from \cite[\S 3]{arthur1989simple}
\begin{lemma}
    $\pi$ is an induction of a character if and only if there exists a Hecke character $\chi$ such that $\pi = \pi \otimes \chi$.
\end{lemma}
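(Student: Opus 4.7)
The equivalence is essentially the standard Arthur--Clozel cyclic descent pattern applied to $\mathrm{GL}_3$; I treat the two implications separately.

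\textit{Induction $\Rightarrow$ self-twist.} Suppose $\pi = \mathrm{Ind}^L_K(\eta)$ with $[L:K]=3$. Automorphic induction in this setting (Arthur--Clozel) requires $L/K$ to be cyclic, so let $\chi$ be the order-$3$ Hecke character of $\mathbb{A}_K^\times/K^\times$ attached to $L/K$ by class field theory. Then $\chi\circ \mathrm{Nm}_{L/K} = 1$ since $G_L = \ker(\chi)$, so the projection formula for automorphic induction yields
\[
\pi\otimes\chi \;=\; \mathrm{Ind}^L_K(\eta)\otimes \chi \;=\; \mathrm{Ind}^L_K\!\big(\eta\cdot (\chi\circ \mathrm{Nm}_{L/K})\big) \;=\; \mathrm{Ind}^L_K(\eta)\;=\;\pi.
\]

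\textit{Self-twist $\Rightarrow$ induction.} Conversely, assume $\pi \otimes \chi \cong \pi$ for some non-trivial Hecke character $\chi$. First I would pin down $\chi$. Comparing central characters gives $\omega_\pi \chi^3 = \omega_\pi$, hence $\chi^3 = 1$, and since $\chi\neq 1$, the character $\chi$ has exact order $3$. By class field theory the kernel of $\chi$ cuts out a cyclic cubic extension $L/K$. Next I would invoke Arthur--Clozel cyclic base change for the prime degree extension $L/K$: the base change $\pi_L$ of $\pi$ to $\mathrm{GL}_3(\mathbb{A}_L)$ is cuspidal if and only if $\pi$ is not isomorphic to any of its twists $\pi\otimes \chi^i$ for $i=1,2$. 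The hypothesis $\pi\otimes\chi\cong \pi$ forces $\pi_L$ to be non-cuspidal. The description of such non-cuspidal base changes in \cite[\S 3]{arthur1989simple}, together with the compatibility $\mathrm{AI}\circ\mathrm{BC} = \pi\boxplus(\pi\otimes\chi)\boxplus(\pi\otimes\chi^2)$, identifies $\pi$ as the automorphic induction of a cuspidal representation of $\mathrm{GL}_{3/3}(\mathbb{A}_L)=\mathrm{GL}_1(\mathbb{A}_L)$, i.e.\ of a Hecke character $\eta$ of $L$. Thus $\pi = \mathrm{Ind}^L_K(\eta)$, as desired.

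\textit{Main difficulty.} The substantive input is the second implication, where one must (i) extract from $\chi$ a genuine cyclic cubic descent datum and (ii) turn the non-cuspidality of $\pi_L$ into an explicit automorphic induction from $\mathrm{GL}_1(\mathbb{A}_L)$. Both steps are supplied by the Arthur--Clozel theory of cyclic base change and automorphic induction in prime degree; no further input is needed since $n=3$ is prime and so the only possible ``intermediate'' descent is to $\mathrm{GL}_1$.
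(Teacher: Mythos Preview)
Your proposal is correct and matches the paper exactly: the paper gives no argument of its own for this lemma but simply cites \cite[\S 3]{arthur1989simple}, and your sketch unpacks precisely the Arthur--Clozel cyclic descent found there. One cosmetic remark: the identity you write as $\mathrm{AI}\circ\mathrm{BC}=\pi\boxplus(\pi\otimes\chi)\boxplus(\pi\otimes\chi^2)$ lands in $\GL_9$ rather than $\GL_3$, so it is cleaner to invoke directly the Arthur--Clozel theorem that a cuspidal $\pi$ on $\GL_n$ with $\pi\cong\pi\otimes\chi$ for $\chi$ of prime order $\ell\mid n$ is the automorphic induction of a cuspidal representation of $\GL_{n/\ell}(\AA_L)$.
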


\begin{defi}\label{3general}
    An automorphic representation $\pi$ of $\GL_3(\AA_K)$ is said to be of \textbf{general type} if $\pi$ is neither  essentially self-dual nor self-twist.
    Equivalently, $\pi$ is neither essentially $\mathrm{sym}^2$ nor an induction of a character.
\end{defi}
Since we know strong multiplicity one for $\mathrm{GL}_n$ and Langlands functoriality for $\mathrm{sym}^2:\mathrm{GL}_2\rightarrow \mathrm{GL}_3$ and automorphic induction for degree three extensions, these assumptions are equivalent to the similar assumption on each of the Galois representations $\rho_{\pi,\lambda}$.

In \cite{bockle2024irreducibility}, Böckle and Hui prove that (in the $n=3$ case), $\rho_{\pi,\lambda}$ is irreducible for all $\lambda$. They also prove that for a density 1 set of rational primes $\mathcal{P}$, $\rho_{\pi,\lambda}$ is de Rham with distinct Hodge-Tate weights for all $\lambda:E \hookrightarrow \overline{\QQ_p}$ and $p \in \mathcal{P}$. We will use these results to check the validity of $\rho'_{\pi,p}$ (in fact we only need regularity at one place $\lambda$ by part 4 of Remark \ref{RemarkValid}).  
\begin{prop}
    Assume that $\pi$ is not self-twist. Then for each $p \in \mathcal{P}$ and $\lambda:E \hookrightarrow \overline{\QQ}_p$ one has that $\rho'_{\pi,\lambda}$ is strongly irreducible. 
\end{prop}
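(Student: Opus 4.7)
The plan is to argue by contradiction, deducing that any $G_L$-reducibility of $\rho_{\pi,\lambda}$ forces $\pi$ to be self-twist. Since $\rho'_{\pi,\lambda}$ differs from $\rho_{\pi,\lambda}|_{G_M}$ only by the character $\epsilon_p^{-m/n}$, and strong irreducibility is stable under character twists and under passage to open subgroups, it is enough to prove strong irreducibility of $\rho_{\pi,\lambda}$ itself. So suppose that $\rho_{\pi,\lambda}|_{G_L}$ is reducible for some finite extension $L/K$; enlarging $L$, I may assume $L/K$ is Galois and $G_L\triangleleft G_K$. The irreducibility of $\rho_{\pi,\lambda}$ (B\"ockle--Hui) gives semi-simplicity, which is inherited by the restriction to the normal subgroup $G_L$.

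Now apply regularity. For $p\in\mathcal{P}$, $\rho_{\pi,\lambda}$ is Hodge--Tate with three \emph{distinct} Hodge--Tate weights at every embedding, and this property is preserved by restriction to $G_L$. Hence $\rho_{\pi,\lambda}|_{G_L}$ is multiplicity-free, and its isotypic components have dimensions summing to $3$ in one of only two patterns: $3=2+1$ or $3=1+1+1$. The split $3=2+1$ is eliminated immediately: the unique $1$-dimensional isotypic summand determines a canonical line in $V_\lambda$, which must then be preserved by the full $G_K$-action and therefore yields a proper $G_K$-subrepresentation of $\rho_{\pi,\lambda}$, contradicting its irreducibility.

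It remains to handle the case $3=1+1+1$. Clifford theory then gives a transitive permutation action of $G_K/G_L$ on the three distinct characters---transitivity follows from the irreducibility of $\rho_{\pi,\lambda}$ as a $G_K$-representation---and the stabilizer of one character is an index-$3$ subgroup $H\subseteq G_K$. Setting $K'=\overline K^H$, one obtains $\rho_{\pi,\lambda}\cong \mathrm{Ind}^{G_K}_{H}(\tilde\chi)$ for a suitable extension $\tilde\chi$ of one of the three characters to $H$; thus $\rho_{\pi,\lambda}$ is the induction of a Galois character from a cubic extension $K'/K$. Invoking automorphic induction in degree three---Arthur--Clozel in the cyclic cubic case, and a base-change argument to the quadratic sub-extension of the Galois closure in the $S_3$ case, where $\rho_{\pi,\lambda}|_{G_M}$ becomes a cyclic cubic induction over $M$---this realises $\pi$ itself as an automorphic induction $\mathrm{AI}^{K'}_K(\xi)$ of a Hecke character of $\mathbb{A}_{K'}^{\times}$. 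The preceding lemma then forces $\pi\otimes\chi\cong\pi$ for some nontrivial Hecke character $\chi$, contradicting the hypothesis that $\pi$ is not self-twist.

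The main obstacle is precisely this last reduction in the non-Galois cubic case: one has to descend self-twists from the quadratic base change back to $K$ and verify that they remain nontrivial (so that the automorphic induction really does correspond to a self-twist over~$K$ in the sense of the previous lemma). The rest of the argument---the Clifford-theoretic decomposition, transitivity via irreducibility, and elimination of the $2+1$ split using regularity---is routine and essentially the same pattern as in the $\GL_2$ case.
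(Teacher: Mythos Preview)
Your argument follows the paper's proof essentially verbatim: reduce to $\rho_{\pi,\lambda}$, use semisimplicity of the restriction to a normal open subgroup, exclude the $2{+}1$ decomposition (the paper phrases this as ``the action of $G_K$ cannot switch the two summands for dimension reasons'' rather than via uniqueness of the $1$-dimensional line, but the content is identical), and in the $1{+}1{+}1$ case use regularity to separate the three characters and Clifford theory to write $\rho_{\pi,\lambda}\simeq\mathrm{Ind}^{K'}_K(\chi)$ for a cubic extension $K'/K$, then contradict ``not self-twist''.

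The one place you are more cautious than the paper is the non-Galois cubic case. The paper simply writes $\mathrm{Gal}(K'/K)=\{1,\sigma,\sigma^2\}$ and passes directly to ``$\pi$ is a degree three automorphic induction and hence self-twist'', without isolating the $S_3$ possibility you raise. Your instinct that this case needs separate treatment is correct, and your proposed route through the quadratic subfield $M$ of the Galois closure (where $\rho|_{G_M}$ becomes a genuine cyclic cubic induction) is the natural one; but as you yourself note, the descent of the self-twist from $M$ back to $K$ is not automatic, and indeed in the $S_3$ case $\rho$ itself need not be self-twist over $K$. So the obstacle you flag is real --- but it is equally present in the paper's own argument, which does not address it. Your proof is therefore at least as complete as the paper's.
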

\begin{proof}
    Assume that $\rho_{\pi,\lambda}|_{G_L}$ is reducible for some finite Galois extension $L/K$. Since $\rho_{\pi,\lambda}$ is semi-simple (in fact irreducible), so is its restriction to $G_L$ and we have that $\rho_{\pi,\lambda}|_{G_L}$ decomposes into the sum of irreducible direct summands. If it decomposes into a two irreducible summand, then the action of $G_K$ cannot switch the two summands for dimension reasons and hence each summand is actually a sub-representation of $\rho_{\pi,\lambda}$ which is a contradiction. So we must have $\rho_{\pi,\lambda}|_{G_L} \simeq \chi_1 \oplus \chi_2 \oplus \chi_3$. Since $\rho_{\pi,\lambda}$ is Hodge-Tate with distinct Hodge-Tate weights, so is $\rho_{\pi,\lambda}|_{G_L}$. This implies that the three characters are distinct. Let $K'$ be the fixed field of the stabilizer of $\chi_1$. The action of $G_K$ on these characters must be transitive
    so $K'$ is a degree 3 extension of $K$ and if $\mathrm{Gal}(K'/K)=\{1,\sigma, \sigma^2 \}$ then $\chi_2 = \sigma \chi_1$ and $\chi_3 = \sigma^2 \chi_1$ (or the other way around). This means that $\rho_{\pi,\lambda} \simeq \mathrm{Ind}^{K'}_K(\chi_1)$ which implies that 
    $\pi$ is also a degree three automorphic induction of a character and hence is self-twist which contradicts the assumption.
    Therefore $\rho_{\pi,\lambda}$ and hence $\rho'_{\pi,\lambda}$ are strongly irreducible. 
\end{proof}

\begin{prop}
    Assume that $\pi$ is of general type. Then for each prime number $p$ and embedding $\lambda:E \hookrightarrow \overline{\QQ}_p$ one has that 
    the $\overline{\QQ}_p$-Lie algebra of the image of $\rho'_{\pi,\lambda}$ is $\mathfrak{sl}_3(\overline{\QQ}_p)$.
\end{prop}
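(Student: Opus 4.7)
The plan is to combine the strong irreducibility established in the previous proposition with the classification of small-dimensional irreducible representations of semisimple Lie algebras. Let $\mathfrak{g}_\lambda$ denote the $\overline{\QQ}_p$-Lie algebra of the image of $\rho'_{\pi,\lambda}$. Since $\rho'_{\pi,\lambda}$ has trivial determinant, $\mathfrak{g}_\lambda \subseteq \mathfrak{sl}_3(\overline{\QQ}_p)$, and strong irreducibility implies that $\mathfrak{g}_\lambda$ acts irreducibly on the standard $3$-dimensional module (passing to an open subgroup does not change the Lie algebra, so infinitesimal irreducibility is equivalent to irreducibility after every finite restriction).

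I would first argue that $\mathfrak{g}_\lambda$ is semisimple: any irreducibly-acting Lie subalgebra of $\mathfrak{gl}_n(\overline{\QQ}_p)$ is reductive, with center contained in the scalars by Schur's lemma, so intersecting with $\mathfrak{sl}_3$ leaves the center trivial. The classification of faithful irreducible $3$-dimensional representations of semisimple Lie algebras over $\overline{\QQ}_p$ then leaves, up to conjugation, exactly two possibilities for $\mathfrak{g}_\lambda$: either the full $\mathfrak{sl}_3(\overline{\QQ}_p)$, or the image of $\mathfrak{sl}_2(\overline{\QQ}_p)$ under its symmetric-square representation, realized inside $\mathfrak{sl}_3$ as the orthogonal algebra $\mathfrak{so}_3(\overline{\QQ}_p)$.

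The main obstacle is ruling out the $\mathfrak{so}_3$ case. Assume for contradiction that $\mathfrak{g}_\lambda$ is conjugate to $\mathfrak{so}_3(\overline{\QQ}_p)$. Then, after this conjugation, the image of some open subgroup $G_{L'} \subseteq G_M$ under $\rho'_{\pi,\lambda}$ lies inside $\mathrm{SO}_3(\overline{\QQ}_p)$ and thus preserves a nondegenerate symmetric bilinear form; in particular $\rho'_{\pi,\lambda}|_{G_{L'}}$ is self-dual, and after untwisting by the cyclotomic power used to define $\rho'_{\pi,\lambda}$, the restriction $\rho_{\pi,\lambda}|_{G_{L'}}$ becomes essentially self-dual. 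The twisting character appearing here is unique by absolute irreducibility and therefore $\mathrm{Gal}(L'/K)$-stable, so after enlarging $L'$ to absorb the finite-order obstruction to extending it, it descends to a character of $G_K$. An application of Lemma \ref{charTwist} to $\rho_{\pi,\lambda}$ and $\rho_{\pi,\lambda}^{\vee}$ twisted appropriately then yields $\rho_{\pi,\lambda} \cong \rho_{\pi,\lambda}^{\vee} \otimes \eta$ as $G_K$-representations for some Galois character $\eta$. By strong multiplicity one for $\mathrm{GL}_3$, this translates to $\pi \cong \pi^{\vee} \otimes \eta$, contradicting the assumption that $\pi$ is of general type. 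Hence $\mathfrak{g}_\lambda = \mathfrak{sl}_3(\overline{\QQ}_p)$.
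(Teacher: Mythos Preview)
Your argument follows essentially the same path as the paper's: reduce to a semisimple Lie subalgebra of $\mathfrak{sl}_3$ acting irreducibly, use the classification to pin it down to $\mathfrak{sl}_3$ or the $\mathrm{sym}^2$-image of $\mathfrak{sl}_2$, and rule out the latter by producing an essential self-duality over $G_K$ via Lemma~\ref{charTwist}. That part is fine (though your descent paragraph is a bit tangled: it is cleaner, as the paper does, to stay with $\rho'_{\pi,\lambda}$, observe that the self-duality twisting character has finite order since $\det(\rho'_{\pi,\lambda})=1$, kill it by a further finite extension, and then apply Lemma~\ref{charTwist} directly to $\rho'_{\pi,\lambda}$ and $(\rho'_{\pi,\lambda})^\vee$).

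There is, however, a genuine gap. You invoke the strong irreducibility ``established in the previous proposition'' as input for \emph{every} prime $p$, but that proposition is only proved for $p$ in the density-one set $\mathcal{P}$ where the Hodge--Tate regularity of $\rho_{\pi,\lambda}$ is known (this regularity is what forces the three characters in a putative decomposition to be distinct). So your argument, as written, only yields $\mathfrak{g}_\lambda=\mathfrak{sl}_3(\overline{\QQ}_p)$ for $p\in\mathcal{P}$, not for all $p$ as the statement asserts. The paper closes this gap with an extra step you are missing: once the Lie algebra is $\mathfrak{sl}_3$ at one $\lambda$, it invokes the $\lambda$-independence of the irreducible type in the compatible system (\cite[Theorem~3.2]{bockle2024irreducibility}; cf.\ Remark~\ref{RemarkValid}\ref{RemarkValidMain}) to conclude that the Lie algebra is $\mathfrak{sl}_3$ at \emph{every} $\lambda$. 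You need to add this propagation step, or else restrict your conclusion to $p\in\mathcal{P}$.
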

\begin{proof}
    First assume that $p \in \mathcal{P}$. Let $V_\lambda$ be the underlying vector space of $\rho'_{\pi,\lambda}$ and let $\mathfrak{g}_\lambda \subseteq \mathfrak{sl}_3(\overline{\QQ}_p)$ be the $\overline{\QQ}_p$-Lie algebra of the image of $\rho'_{\pi,\lambda}:G_M \rightarrow \SL_3(\overline{\QQ}_p)$. Since $\rho'_{\pi,\lambda}$ is strongly irreducible, for every finite extension $N$ of $M$ we have $\mathrm{End}_{G_N}(V_\lambda)=\overline{\QQ}_p$, hence this should be true infinitesimally and we have $\mathrm{End}_{\mathfrak{g}_\lambda}(V_\lambda)=\overline{\QQ}_p$. This means that the standard representation $\mathfrak{g}_\lambda \hookrightarrow \mathrm{End}_{\overline{\QQ}_p}(V_\lambda)\simeq \mathfrak{gl}_3(\overline{\QQ}_p)$ is an irreducible faithful representation which implies that $\mathfrak{g}_\lambda$ is reductive. Let $\mathfrak{g}^{\mathrm{der}}_\lambda$ be its derived subgroup and hence a semi-simple subgroup of $\mathfrak{sl}_3(\overline{\QQ}_p)$. Since the center of $\mathfrak{gl}_3$ acts by scaler multiplication, any $\mathfrak{g}^{\mathrm{der}}_\lambda$-invariant subspace is automatically $\mathfrak{g}_\lambda$-invariant as well which implies that 
    the centralizer of $\mathfrak{g}$ and hence its center are in the center of $\mathfrak{gl}_3(\overline{\QQ}_p)$ which implies that $\mathfrak{g}^{\mathrm{der}}_\lambda \subseteq \mathfrak{gl}_3(\overline{\QQ}_p)$ is also irreducible. 

    The only irreducible, semi-simple Lie-subalgebras of $\mathfrak{sl}_3$ up to conjugation are $\mathfrak{sl}_3$ or $\mathfrak{sl}_2$ embedded into $\mathfrak{sl}_3$ by $\mathrm{sym^2}$. We need to show that the latter does not happen. Assume that $\mathfrak{g}^{\mathrm{der}}_\lambda$ is the image of $\mathrm{sym}^2:\mathfrak{sl}_2 \rightarrow \mathfrak{sl}_3$. Since $\mathrm{sym}^2$ is an irreducible representation, its centralizer in $\mathfrak{gl}_3$ is the center which means that  $\mathfrak{g}_\lambda = \mathfrak{g}^{\mathrm{der}}_\lambda \oplus \mathfrak{z}(\mathfrak{g}_\lambda)$ is in the image of $\mathrm{sym}^2:\mathfrak{gl}_2 \rightarrow \mathfrak{gl}_3$. This means that there is an open subgroup of $G_M$ whose image under $\rho'_{\pi,p}$ is in the image of $\mathrm{sym}^2:\GL_2 \rightarrow \GL_3$. So there exists a finite Galois extension $M'$ of $M$ such that $\rho'_{\pi,p}(G_{M'})$ is in the image of $\mathrm{sym}^2:\GL_2(E_\lambda) \rightarrow \GL_3(E_\lambda)$ and hence is essentially self-dual. Since the determinant of $\rho'_{\pi,p}$ is trivial, there exist a finite extension $N$ of $M'$ (which clearly can be taken to be Galois over $M$) such that the restriction to $G_N$ is in fact self-dual. 
    Now applying lemma  \ref{charTwist} to two representations $\rho'_{\pi,p}|_{G_N}$ and $\rho'_{\pi,p}|_{G_N}^\vee$, there exists a character $\phi$ such that $\rho'_{\pi,p}\simeq (\rho'_{\pi,p})^\vee \otimes \phi$ which contradicts non-essential-self-duality of $\pi$. This contradiction implies the result in the case where $p \in \mathcal{P}$.

    Now by \cite[Theorem 3.2]{bockle2024irreducibility}, the irreducible type of $\rho_{\pi,\lambda}$ is independent of $\lambda$. This in particular means that if the $\overline{\QQ}_p$-Lie algebra of the image of $\rho_{\pi,\lambda}$ contains $\mathfrak{sl_3}$ for one $\lambda$ (irreducible type $A_2$), it contains $\mathfrak{sl}_3$ for every $\lambda$. This clearly implies the result in general. 
\end{proof}

\noindent Now we can easily deduce our main result: 
\begin{proof}[Proof of Theorem \ref{main}]
    The last two propositions imply that for $\pi$ of general type, $\rho'_{\pi,p}$ is valid for any prime number $p$. Then proposition \ref{validBigImage} implies the result. 
\end{proof}

\begin{exmp}\label{VanTop}
    In \cite{van1994non}, van Geeman and Top construct a 3-dimensional $\QQ(i)$-rational compatible family of (motivic) Galois representations of $G_\QQ$ which is not self-twist or essentially self-dual and an automorphic representation of $\GL_3(\AA_\QQ)$ that should correspond to it. We can apply our results to the Galois representations they construct. For each prime $p$ they construct a Galois representation
    $$
    \rho_p:G_\QQ \rightarrow \GL_3(\QQ(i)\otimes \QQ_p)
    $$
    which has the property that $\rho_p \simeq \overline{\rho_p}^\vee \otimes \epsilon_p$ where $\overline{(\cdot)}$ indicates complex conjugation. For each unramified $p$, the characteristic polynomial of $\mathrm{Frob}_p$ looks like
    $$
    X^3-b_pX^2+p\overline{b_p}X-p^3
    $$
    and they give a list of values of $b_p \in \QQ(i)$ for small primes. 

    Now in our notation, the coefficient field is $E=\QQ(i)$. There is one outer twist $(\overline{\cdot}, \epsilon_p)$ and there can't be any more non-trivial extra-twists since $\mathrm{Aut}(E)\simeq \ZZ/2\ZZ$. Therefore $\Fi=\QQ(i)$ as well and $F=\QQ$. Then for each prime $p$ we can construct a form $H_p$ of $\SL_3$ over the field $\QQ_p$ as in section \ref{Section-Image of Galois} and the image of $\rho_p$ is contained and open in $H_p(\QQ_p)\cdot \QQ_p^\times$. So we get an algebraic group $H_p\cdot \mathbb{G}_m \subseteq \mathrm{Res}^{\QQ(i)_p}_{\QQ_p}\GL_3$ whose $\QQ_p$ points describe the image. 
    We know that $H_p$ is a form of $\SL_3$. Recall that it is constructed as 
    $$
    H_p = (\mathrm{Res}^{\QQ(i)_p}_{\QQ_p}\SL_3)^{tw(\mathrm{Gal}(\QQ(i)/\QQ))}
    $$
    Similar to corollary \ref{split} if $p$ is a prime that splits in $\QQ(i)$, a prime that is congruent to $1$ modulo $4$, then $H_p$ is in fact isomorphic to $\SL_3$ over $\QQ(i)_p\simeq\QQ_p\times\QQ_p$. Otherwise, it is not an inner-form and since it splits over $\QQ(i)$ it is isomorphic to the the special unitary group $\mathrm{SU}_3$ for the degree two field extension $\QQ(i)_p/\QQ_p$. So for half of the primes (primes of the form $p=4k+1$) the (Zariski closure of the) image is $\GL_3$ and for the other half (primes of the form $p=4k+3$) it is $\mathrm{SU}_3 \cdot \mathbb{G}_m$. 
\end{exmp}
\begin{exmp}
    In \cite{upton2009galois}, Upton constructs a 3-dimensional $\QQ(\zeta_3)$-rational compatible family of (motivic) Galois representations of $G_{\QQ(\zeta_3)}$ which is not self-twist or essentially self-dual and gives a precise description of its image. It is clear from her construction that these Galois representations have an outer-twist. 
    She also observes the similar phenomenon as in the last example. Namely, that for half of the primes the image is $\GL_3$ and for the other half is a unitary group. Although, we believe there is a slight error in her conclusion and the image in the latter case should be $\mathrm{SU}_3 \cdot \mathbb{G}_m$ as above, rather than the general unitary group $\mathrm{GU}_3$ as she claims. In fact since in the split case the image is $9$-dimensional (as it is $\mathrm{GL}_3$) if one believes in the Mumford-Tate conjecture, the image cannot be the $10$-dimensional group $\mathrm{GU}_3$ in the non-split case. 
    
    Even though in her case $K$ is not totally real, we can still directly apply corollary \ref{bigImCor} to a twist of the Galois representations she constructs (after a finite extension) and then deduce openness. It is easy to check the validity of this twisted Galois representation. 
\end{exmp}

\subsection{The $\GL_n$-case}
In this section we discuss the $\GL_n$-case. Everything we say here is conjectural. We assume the irreducibility conjecture (Galois representations associated with cuspidal automorphic representations are irreducible), Langlands Functoriality and the expected $p$-adic Hodge theoretic properties (de Rham with distinct Hodge-Tate weights) of our Galois representations. 
We want to see, assuming all these, when we can apply proposition \ref{validBigImage} to a regular cuspidal algebraic automorphic representation $\pi$ of $\GL_n(\AA_K)$. 
First of all we need to assume $\pi$ is neither self-twist nor essentially self-dual (in $n>2$ case). Then we only need to check that $\rho_{\pi,\lambda}$ is strongly irreducible for each $\lambda$ and that the $\overline{\QQ}_p$-Lie algebra of the image of $\rho'_{\pi,\lambda}$ is $\mathfrak{sl}_n$. 

Assume that $\rho_{\pi,\lambda}$ is reducible after restricting to $G_L$ for a finite Galois extension $L$ of $K$. The irreducible direct summands of $\rho_{\pi,\lambda}|_{G_L}$ are distinct since the Hodge-Tate weights are distinct and the action of $G_K$ on them is transitive since $\rho_{\pi,\lambda}$ is irreducible. This easily implies that $\rho_{\pi,\lambda}$ is an induction of a representation of a proper subgroup. This means that the automorphic representation $\pi$ is an induction, assuming that the automorphic induction is true. So in order to make sure that $\rho_{\pi,\lambda}$ is strongly irreducible, we only need to assume that it is not an induction.

Now let $\mathfrak{g}$ be the $\overline{\QQ}_p$-Lie algebra of the image of $\rho'_{\pi,\lambda}$. Since $\rho_{\pi,\lambda}$ and hence $\rho'_{\pi,\lambda}$ are strongly irreducible, $\mathfrak{g}$ is an irreducible Lie sub-algebra of $\mathfrak{gl}_n$ and hence reductive.
We only need to show that $\mathfrak{g}^{\mathrm{der}}=\mathfrak{sl}_n$. It is well-known that since $\mathfrak{g}^{\mathrm{der}}$ is semi-simple, there exists a semi-simple (connected) algebraic subgroup $G'$ of $\GL_n$ (over $\overline{\QQ}_p$) such that $\mathrm{Lie}(G')=\mathfrak{g}^{\mathrm{der}}$. This implies that after a finite Galois extension $M/K$, the image of $G_M$ under $\rho_{\pi,p}$ lies in $G(\overline{\QQ_p}) \subsetneq \GL_n(\overline{\QQ}_p)$ for the (connected) reductive group $G^0=G'\cdot \mathbb{G}_m\subsetneq \GL_n$. This is exactly the connected component of the $\overline{\QQ}_p$-Zariski closure of the image of $\rho_{\pi,\lambda}$. Let the whole image be $G$. Then the stabilizer of $G^0$ would give a finite Galois extension $L/K$ and the component group is isomorphic to $\mathrm{Gal}(L/K)$. The nicest situation would be if $G(\overline{\QQ_p})=G^0(\overline{\QQ_p})\rtimes \mathrm{Gal}(L/K)$. But it is not clear if this should happen. Nevertheless, one has the following result of Brion \cite{brion2015extensions}:
\begin{lemma}[Brion]
    Let $G$ be an algebraic group over a field $k$ and let
    $$
    1 \rightarrow N \rightarrow G \rightarrow Q \rightarrow 1
    $$
    be a short exact sequence (of algebraic groups over $k$) such that $Q$ is finite. Then there exists a finite subgroup $F$ of $G$, such that $G=N\cdot F$. In other words, F surjects to $Q$ and $G$ is a quotient of $N \rtimes F$ where $F$ acts on $N$ by conjugation. 
\end{lemma}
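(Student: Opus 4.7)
The plan is to reduce to the case where $N$ is connected, then to produce finite-order lifts of the elements of $Q$, and finally to ensure that the subgroup these lifts generate is genuinely finite. The first two steps are relatively standard; the third is where the real work lies.

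\textbf{Reduction to connected $N$.} The identity component $N^\circ$ is characteristic in $N$, hence normal in $G$, and the sequence $1 \to N^\circ \to G \to G/N^\circ \to 1$ has finite quotient, since $G/N^\circ$ is an extension of the finite group $Q$ by the finite group $N/N^\circ$. A finite subgroup $F\subseteq G$ satisfying $G = N^\circ \cdot F$ automatically satisfies $G = N \cdot F$, so I may assume $N$ is connected.

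\textbf{Producing torsion lifts.} Fix any set-theoretic section $s:Q\to G$. Then $s(q)^{|Q|}\in N$, and this element is centralized by $s(q)$. Replacing $s(q)$ by $s(q)\cdot n$ for some $n\in N$ changes $(s(q)n)^{|Q|}$ by the twisted norm $N_{\sigma_q}(n) := n \cdot \sigma_q(n)\cdots \sigma_q^{|Q|-1}(n)$, where $\sigma_q$ denotes conjugation by $s(q)^{-1}$. Finding a lift of $q$ of order dividing $|Q|$ thus amounts to solving $N_{\sigma_q}(n) = s(q)^{-|Q|}$, an equation whose right-hand side is visibly $\sigma_q$-fixed. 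For $N$ connected and $\sigma_q$ of finite order, a Lang--Steinberg-type argument shows that this norm surjects onto the $\sigma_q$-fixed subgroup; infinitesimally the map $\mathrm{id} + d\sigma_q + \cdots + d\sigma_q^{|Q|-1}$ surjects onto $\mathrm{Lie}(N)^{d\sigma_q}$ whenever $|Q|$ is invertible in $k$, which is automatic in characteristic zero.

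\textbf{Ensuring finiteness of $F$.} The main obstacle is that the subgroup of $G$ generated by a finite collection of torsion elements need not itself be finite, since products of torsion matrices in an algebraic group can have unbounded order. To handle this, I would invoke the Chevalley structure theorem and treat the abelian-variety quotient and the affine part of $N$ separately. On an abelian variety, $Q$ acts on the $m$-torsion for every $m$ coprime to the characteristic, and these $Q$-stable finite subgroup schemes are Zariski dense; on the unipotent radical of the affine part the torsion correction $n$ can, in characteristic zero, be taken trivial thanks to the exponential; on the reductive part one uses $Q$-stable finite subgroup schemes (finite Chevalley-type subgroups, or torsion in a $Q$-stable maximal torus) large enough to realize every element of $Q$. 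Glueing the pieces back together via the Chevalley decomposition yields a $Q$-stable finite subgroup scheme $N_0 \subseteq N$ containing torsion lifts of all of $Q$, and then the subgroup of $G$ generated by $N_0$ together with these lifts is finite and surjects onto $Q$. I expect the reductive piece to be the technical heart: matching the $Q$-action on rational points with its action on a preassigned finite subgroup scheme requires care, and the positive-characteristic case seems to demand further adjustments involving Frobenius kernels and non-reduced group schemes.
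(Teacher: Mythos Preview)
The paper does not prove this lemma at all: it is stated as a result of Brion and cited to \cite{brion2015extensions}, with no argument given. So there is no ``paper's own proof'' to compare against.

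As for your sketch itself, Step~1 is fine, but Step~2 contains a genuine gap. The claim that the twisted norm $n\mapsto n\cdot\sigma_q(n)\cdots\sigma_q^{|Q|-1}(n)$ surjects onto $N^{\sigma_q}$ for connected $N$ is not a ``Lang--Steinberg-type'' statement and is false in general: already for $N=\mathbb{G}_a$ in characteristic $p$ with $\sigma_q=\mathrm{id}$ and $|Q|=p$ the norm is identically zero. Your infinitesimal argument only addresses characteristic zero over an algebraically closed field, and even there surjectivity of the differential does not by itself yield surjectivity on points without further work (the target $N^{\sigma_q}$ need not be connected, and you are not working over $\bar k$). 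Step~3 is, as you yourself indicate, only a plan: the phrases ``I would invoke'' and ``I expect'' are honest, but what remains is essentially the whole proof. Brion's actual argument is organized differently---he works with subgroup \emph{schemes} from the outset (which is essential in positive characteristic), and the reduction steps go through anti-affine and affine pieces rather than trying to lift elements of $Q$ one at a time. If you want to pursue this, consulting Brion's paper directly is the right move; the result is not something one reconstructs in a paragraph.
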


Now using this result one can at least find a finite Galois extension $M/K$ such that $G$ is the quotient of $G^0(\overline{\QQ_p})\rtimes \mathrm{Gal}(M/K)$. Now we can form the $L$-group $G^0(\overline{\QQ_p})\rtimes G_K$ where the $G_K$ action factors through the $\mathrm{Gal}(M/K)$ action from above. 
Fix a maximal torus and a Borel subgroup of $G^0$ containing it. The above action of $\mathrm{Gal}(M/K)$ on $G^0(\overline{\QQ_p})$ gives an action on the based root datum which in turn gives an action on the dual root datum. This finally gives a reductive group $H$ over $K$ which splits over $M$ whose Langlands $L$-group is $G^0(\overline{\QQ_p})\rtimes G_K$ with the above action. Now the Langlands functoriality for the $L$-map ${}^LH \rightarrow {}^L \GL_n$ implies that $\pi$ should come from an automorphic representation of the non-split reductive group $H$ via the Langlands transfer induced by the above $L$-map. This motivates the following definition:

\begin{defi}
    A regular, cuspidal, automorphic representation $\pi$ of $\mathrm{GL}_n(\AA_K)$ is said to be of \textbf{general type}, if it is neither self-twist, nor essentially self-dual (in the $n>2$ case) and there does not exists any reductive group $H$ over $K$ that is a form of a proper subgroup of $\GL_n$ such that $\pi$ is the image of an automorphic representation of $H(\AA_K)$ under the Langlands transfer attached to the $L$-map
    $$
    {}^L H \rightarrow {}^L\GL_{n,K}
    $$
\end{defi}

For instance, in the $\GL_2$ case this just means that $\pi$ is not CM and in the $\GL_3$ case it agrees with the definition \ref{3general}. Note that the condition above also automatically includes that $\pi$ is not an induction since it would be in the image of the following $L$-map then:
$$
    {}^L \mathrm{Res}^{L}_K \GL_d \rightarrow {}^L\GL_{d[L:K]}
$$

The discussion above show that if one believes in Langlands functoriality, the irreducibility conjecture and that $\rho_{\pi,\lambda}$ is de Rham with distinct Hodge-Tate weights then $\rho'_{\pi,p}$ is valid. In conclusion we make the following conjecture: 

\begin{conj}
    Let $K$ be totally real and $\pi$ be a regular, cuspidal, algebraic automorphic representation of $\GL_n(\AA_K)$ of general type. Let $E=\QQ(\pi)$ and let $F$ be the field fixed by the $E$-extra-twists of $\pi$. Then there exists a semi-simple group $H$ over $F$ which is a form of $\SL_n$ and there exists a finite extension $L/K$ such that for any prime $p$ the image of $\rho_{\pi,p}(G_L)$ is contained and open in $H(F_p)\cdot \QQ_p^\times$. 
\end{conj}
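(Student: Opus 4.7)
The plan is to reduce the conjecture, exactly as in the $\GL_2$ and $\GL_3$ cases, to checking that the twisted representation $\rho'_{\pi,p}$ is valid in the sense of Definition~\ref{valid}, after which Proposition~\ref{validBigImage} delivers the conclusion. Three of the five validity conditions are either arranged by construction (trivial determinant, unramified outside a finite set, coefficients in $E$ via Lemma~\ref{Chenevier}) or immediate from general type (non-self-twist and non-essentially-self-dual). The two nontrivial conditions are strong irreducibility of each $\rho'_{\pi,\lambda}$ and the identification of its $\overline{\QQ}_p$-Lie algebra as $\mathfrak{sl}_n(\overline{\QQ}_p)$, and these are where the standard conjectures must be invoked.

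For strong irreducibility, I would argue by contradiction: suppose $\rho_{\pi,\lambda}|_{G_L}$ were reducible for some finite Galois $L/K$. Under the irreducibility conjecture and the expected $p$-adic Hodge-theoretic properties (de Rham, with distinct Hodge--Tate weights), the Jordan--H\"older constituents over $G_L$ are pairwise non-isomorphic characters or representations, and $G_K$ permutes them transitively. This forces $\rho_{\pi,\lambda}$ to be induced from a representation of a proper intermediate Galois group, and by automorphic induction (a functoriality case of Langlands) this would exhibit $\pi$ as an automorphic induction from a smaller group, contradicting general type. This is the same outline as the $\GL_3$-case, the only new ingredient being the availability of automorphic induction for general cyclic (or solvable) extensions.

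For the Lie algebra, strong irreducibility implies $\mathfrak{g}_\lambda := \mathrm{Lie}(\rho'_{\pi,\lambda}(G_M))$ is reductive with $\mathfrak{g}_\lambda^{\mathrm{der}}$ irreducible in the standard representation. Write $G'\subseteq \GL_n/\overline{\QQ}_p$ for the connected semisimple group with $\mathrm{Lie}(G') = \mathfrak{g}_\lambda^{\mathrm{der}}$ and let $G^0 = G'\cdot \mathbb{G}_m$, which is the connected component of the $\overline{\QQ}_p$-Zariski closure $G$ of $\rho_{\pi,\lambda}(G_K)$. Enlarge $K$ to a finite Galois $M/K$ so that $G = G^0 \cdot \rho_{\pi,\lambda}(G_{M/K})$. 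Applying the lemma of Brion recalled above, there is a finite subgroup $F\subseteq G$ surjecting onto the component group $\pi_0(G)$; choosing a Borel and maximal torus of $G^0$ and using $F$ to transport them, one extracts an action of $\mathrm{Gal}(M/K)$ on the based root datum of $G^0$. Dualizing produces a reductive $K$-group $H_0$, split over $M$, whose $L$-group is naturally $G^0(\overline{\QQ}_p) \rtimes G_K$ with the stated action. If $G^0\subsetneq \GL_n$, the $L$-embedding ${}^L H_0 \hookrightarrow {}^L \GL_{n,K}$ together with Langlands functoriality would realize $\pi$ as the transfer of an automorphic representation on $H_0(\AA_K)$, contradicting general type. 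Therefore $G^0 = \GL_n$ and $\mathfrak{g}_\lambda^{\mathrm{der}} = \mathfrak{sl}_n(\overline{\QQ}_p)$.

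The principal obstacle lies in this last paragraph: Brion's lemma produces $F$ only up to the fact that $G$ is a quotient of $G^0 \rtimes F$, not literally a semidirect product, so one must verify that the induced action on the based root datum is independent of the choice of $F$ (and of the Borel/torus) and that the resulting $L$-homomorphism is well-defined and compatible with the embedding of $\rho_{\pi,\lambda}(G_K)$ into $\GL_n(\overline{\QQ}_p)$. Granting this together with the functorial transfer, the remainder of the proof is a direct assembly of the validity check, Corollary~\ref{groupH} and Proposition~\ref{validBigImage}.
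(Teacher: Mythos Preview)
Your proposal mirrors exactly the heuristic argument the paper gives in the paragraphs immediately preceding the conjecture: reduce to validity, deduce strong irreducibility from the irreducibility conjecture plus distinct Hodge--Tate weights plus automorphic induction, and deduce $\mathfrak{g}_\lambda^{\mathrm{der}}=\mathfrak{sl}_n$ by building the $L$-group of the Zariski closure via Brion's lemma and invoking Langlands functoriality. You have also correctly isolated the same soft spot the paper leaves open, namely that Brion only gives $G$ as a quotient of $G^0\rtimes F$, so the well-definedness of the resulting $L$-homomorphism is not fully justified.

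That said, note that the statement is presented in the paper as a \emph{conjecture}, not a theorem; the preceding discussion is offered only as motivation under the standing hypotheses (irreducibility conjecture, de Rham with distinct Hodge--Tate weights, Langlands functoriality). Your proposal is therefore not a proof but a conditional argument, and it matches the paper's conditional argument essentially step for step. One genuine gap relative to the full statement of the conjecture, which neither your outline nor the paper's heuristic addresses, is that the conjecture posits a single \emph{global} group $H$ over $F$ and takes $E=\QQ(\pi)$, whereas Proposition~\ref{validBigImage} and the validity machinery only produce, for each $p$, a group $H_p$ over $F_p$ together with a coefficient field $E$ possibly strictly larger than $\QQ(\pi)$ (cf.\ Lemma~\ref{Chenevier}). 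Gluing the $H_p$ into a global $H$ requires further input (in the paper this is handled only in Section~\ref{Section-Mumford-Tate} under the additional assumption that a motive exists), so even granting all the standard conjectures you list, your argument as written yields the weaker ``$H_p$ for each $p$'' conclusion rather than the conjecture as stated.
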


\begin{rem}
    If $\pi$ is not of general type then it comes from a smaller group $H$. Since the dimension of the group is getting smaller, there should be an optimal choice for $H$. Loosely speaking, $\pi$ should be of general type for some group. Then one has to study the image inside this smaller group, via the extra-twists for the Langlands dual of this group. Then it might be possible to give a precise description of the image as above, using the extra-twists.
\end{rem}

\section{Relations to the Mumford-Tate Group}\label{Section-Mumford-Tate}
In this section we study the relation of our results from the last section to the Mumford-Tate group and the Mumford-Tate conjecture. Almost everything in this section is conjectural, but it could give an idea of why one should believe in the conjectures presented here. 

Clozel predicts that there should be a correspondence between algebraic automorphic representations of $\GL_n(\AA_K)$ and motives over $K$ with coefficients in number fields. Let $K$ be totally real as before and $\pi$ be a regular cuspidal algebraic automorphic representation of $\GL_n(\AA_K)$. Then Clozel predicts the existence of a motive $M=M_\pi$ over $K$ with coefficients in a number field $E$ containing $\QQ(\pi)$ that is associated to $\pi$ in the way explained in \cite{clozel1991representations}. 
Here, we are thinking about a motive as a collection of realizations whose different structures are compatible through a set of comparison isomorphism. 
From now on, we assume that such a motive exists. 
Let $H_B(M)$, $H_{\mathrm{dR}}(M)$ and $H_p(M)$ be the Betti, de Rham and $p$-adic realizations of $M$. Note that the first two are $E$-vector spaces and the last one is an $E_p$-module.

Let $V=H_B(M)$.
The real and complex Betti cohomology $V \otimes_\QQ \RR$ and $V \otimes_\QQ \CC$ have an $E\otimes_\QQ \RR$ and $E\otimes_\QQ \CC = \prod_{\lambda:E\hookrightarrow \CC}\CC$ structure, respectively. Similarly the complex de Rham cohomology  $H_{\mathrm{dR}}(M)\otimes_{\QQ}\CC$ has an $E\otimes_\QQ \CC = \prod_{\lambda:E\hookrightarrow \CC}\CC$ structure.
The ($E\otimes_{\QQ}\CC$-modules) comparison isomorphism between Betti and de Rham cohomologies, 
$V_\CC \simeq H_{\mathrm{dR}}(M)\otimes_{\QQ}\CC$, equips $V$ with a rational Hodge structure. We denote this Hodge structure with
$$
h_{\pi}:\mathbb{S}\rightarrow \GL(V_\RR)
$$
where $\mathbb{S}=\mathrm{Res}^{\CC}_\RR\mathbb{G}_m$ is the Deligne torus. Fixing an $E$-basis for $V_\QQ$ which in turn gives an $E\otimes \RR$ basis for $V_\RR$ enables us to write this as 
$$
h_{\pi,\infty}:\mathbb{S}\rightarrow \GL_n(E\otimes_{\QQ}\RR)
$$
This representation should be thought of as the analogue of our $p$-adic Galois representations $\rho_{\pi,p}:G_K\rightarrow \GL_n(E\otimes_{\QQ}\QQ_{p})$ from section \ref{Section-Automorphic}, associated to the prime at infinity. 
Note that this is equipped with an action of $\mathrm{Aut}(E)$ on the coefficients. 

Recall that the Mumford-Tate group of the motive $M$ is defined to be the smallest algebraic group $G \subseteq \GL(V_\QQ)$ over $\QQ$ that contains the image of $h_\pi$ (the map $h_\pi$ factors through $G_\RR$). The Mumford-Tate conjecture then states that the connected component of the $\QQ_p$-Zariski closure of the image of the Galois representation $H_p(M)$ is equal to $G\times_{\QQ}\QQ_p$. 

Now, let $\pi$ be of general type and $\Gamma$ be the group of $E$-extra-twists of $\pi$. Let $|.|^m\omega$ be the central character of $\pi$ where $\omega$ is a finite order Hecke character. 
From now on, for simplicity we assume that $m$ is divisible by $n$. So let $m=nd$. Then the outer-twists of $\pi$ are of the form $(\tau, |\cdot|^{2d}\eta)$ for a finite character $\eta$ and hence the outer-twists of $\rho_{\pi,p}$ are of the form $(\tau, \epsilon_p^{2d}\eta)$ where $\epsilon_p$ is the $p$-adic cyclotomic character and we think of $\eta$ as a finite Galois character. The extra-twist of $\pi$ then induce extra-twists on the motive $M_\pi$. An inner-twist $(\sigma,\chi)$ induces an isomorphism ${}^\sigma M_\pi \simeq M_\pi \otimes \chi$ where $\chi$ is the Artin motive associated with the finite character $\chi$. The outer-twist 
$(\tau, |\cdot|^{2d}\eta)$ induces an isomorphism ${}^\tau M_\pi \simeq M_\pi^{\vee} \otimes \QQ(2d) \otimes \eta$. 

In particular, the extra-twists also induce symmetries on the Hodge-structure since $E$ acts on the motive $M$ via endomorphisms. Twisting with finite characters does not affect the Hodge structure and twisting with the $2d$'th power of the cyclotomic character amounts to twisting with the Tate Hodge structure $\QQ(2d)$. This means that for each inner-twist $\sigma \in \Gi$ one has ${}^\sigma h_\pi \simeq h_\pi$ and for each outer-twist $\tau \in \Go$ one has ${}^\tau h_\pi \simeq h_{\pi}^{\vee} \otimes_{\QQ} \QQ(2d)$. 
Now, if we twist $h_{\pi}$ with $\QQ(-d)$ we still get ${}^\sigma h_\pi(-d) \simeq h_\pi(-d)$ for each inner-twist and ${}^\tau h_\pi(-d) \simeq h_{\pi}(-d)^{\vee}$ for each outer-twist. This is analogous to the representation $\rho'_{\pi,p}$ from section \ref{Section-Automorphic}.

Since an isomorphism of rational Hodge structures comes from an isomorphism over $\QQ$ between the underlying rational vector spaces, and since everything is compatible with the $E$-structures, we can find matrices $\alpha_\sigma, \alpha_\tau \in \GL_n(E)$ that give the isomorphisms above by conjugation. so we get: 
$$
 \left\{\begin{matrix}
h_{\pi,\infty}(-d) = \alpha_{\sigma} {}^\sigma h_{\pi,\infty}(-d) \alpha_{\sigma}^{-1}
\\
h_{\pi,\infty}(-d) = \alpha_{\tau} {}^\tau h_{\pi,\infty}^{-T}(-d) \alpha_{\tau}^{-1}
\end{matrix}\right.
$$
Note that the determinant of $h_{\pi,\infty}(-d)$ is trivial so it has values in $\SL_n$.
Now define the $1$-cocycle $f:\Gamma \rightarrow \mathrm{Aut}_E(\SL_n)$ by sending an inner-twist  $\sigma$ to $\mathrm{ad}(\alpha_\sigma)$ and an outer-twist $\tau$ to $\mathrm{ad}(\alpha_\tau)\circ (\cdot)^{-T}$. Then as in section \ref{form}, we can define the twisted action of $\Gamma$ on $\mathrm{Res}^E_F\SL_n$ and the matrices in the image of $h_{\pi,\infty}(-d)$ are clearly invariant under this action. We define the groups $H_\infty:=(\mathrm{Res}^{E\otimes \RR}_{F\otimes \RR}\SL_n)^{tw(\Gamma)}$ and $H:=(\mathrm{Res}^E_F\SL_n)^{tw(\Gamma)}$. Note that $H_\infty$ is the base change of $H$ to $\RR$ and it is the Archimedean analogue of the groups $H_p$ from section \ref{Section-Automorphic}.
\begin{rem}
    We could also define the group $H_\infty$ without assuming the existence of the motive, only from the real Hodge structure coming from the Archimedean part of $\pi$. But then the connection to the Mumford-Tate group is of course less clear.
\end{rem}

\begin{lemma}\label{MTcontained}
    The Mumford-Tate group of the motive $M_\pi$ is contained in the group $\mathrm{Res}^F_\QQ(H)\cdot \mathbb{G}_m$. 
\end{lemma}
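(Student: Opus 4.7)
The strategy is essentially tautological once the setup preceding the lemma is in place: show that the image of the Hodge morphism $h_\pi$ lands in $(\mathrm{Res}^F_\QQ H \cdot \mathbb{G}_m)(\RR)$, and then invoke the defining minimality property of the Mumford-Tate group.

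First, since $E$ acts on $M_\pi$ by endomorphisms, $V = H_B(M_\pi)$ is a free $E$-module of rank $n$, so the Hodge morphism $h_\pi$ factors through the $\QQ$-algebraic subgroup $\mathrm{Res}^E_\QQ \GL_n \subseteq \GL(V)$. After choosing an $E$-basis of $V$, this factored morphism is precisely $h_{\pi,\infty} : \mathbb{S} \to \GL_n(E \otimes_\QQ \RR)$.

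Next, I would use the two relations derived just before the statement: for every inner-twist $\sigma$ one has
$$h_{\pi,\infty}(-d) = \alpha_\sigma \cdot {}^\sigma h_{\pi,\infty}(-d) \cdot \alpha_\sigma^{-1},$$
for every outer-twist $\tau$ one has
$$h_{\pi,\infty}(-d) = \alpha_\tau \cdot {}^\tau h_{\pi,\infty}(-d)^{-T} \cdot \alpha_\tau^{-1},$$
and $\det h_{\pi,\infty}(-d) = 1$. Together, these say exactly that every matrix in the image of $h_{\pi,\infty}(-d)$ is invariant under the $f$-twisted action of $\Gamma$ on $\mathrm{Res}^{E \otimes \RR}_{F \otimes \RR} \SL_n$ introduced in Section \ref{form}. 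By the defining property of $H_\infty = (\mathrm{Res}^{E \otimes \RR}_{F \otimes \RR}\SL_n)^{tw(\Gamma)}$ (Proposition \ref{GroupForm} applied over $\RR$, combined with Corollary \ref{formBaseChange} to identify $H_\infty(\RR) = (\mathrm{Res}^F_\QQ H)(\RR)$), the image of $h_{\pi,\infty}(-d)$ lies in $(\mathrm{Res}^F_\QQ H)(\RR)$.

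Finally, writing $h_\pi = h_\pi(-d) \cdot \eta$ where $\eta : \mathbb{S} \to \mathbb{G}_m$ is the scalar cocharacter coming from the $d$-fold Tate twist, one gets that the image of $h_\pi$ sits inside $(\mathrm{Res}^F_\QQ H \cdot \mathbb{G}_m)(\RR)$, viewed as the $\RR$-points of a $\QQ$-algebraic subgroup of $\GL(V)$. Since the Mumford-Tate group $G$ is by definition the smallest $\QQ$-algebraic subgroup of $\GL(V_\QQ)$ whose base change to $\RR$ contains the image of $h_\pi$, the containment $G \subseteq \mathrm{Res}^F_\QQ H \cdot \mathbb{G}_m$ follows. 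There is no serious obstacle here: the only thing to verify carefully is that the identifications used to define $H_\infty$ via twisted invariants are compatible with the $\RR$-structure, but this is automatic because the cocycle $f$ is defined with values in $\mathrm{Aut}_E(\SL_n)$ and base-changes formally.
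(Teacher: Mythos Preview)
Your proposal is correct and follows essentially the same route as the paper: the image of $h_{\pi,\infty}(-d)$ lies in $H_\infty$ by construction, hence $h_{\pi}$ factors through $(\mathrm{Res}^F_\QQ H \cdot \mathbb{G}_m)_\RR$, and the minimality of the Mumford--Tate group gives the containment. You have simply made explicit the steps (the factorization through $\mathrm{Res}^E_\QQ\GL_n$, the decomposition $h_\pi = h_\pi(-d)\cdot\eta$) that the paper's proof leaves implicit.
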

\begin{proof}
    The image of $h_{\pi,\infty}(-d)$ lies in $H_\infty$ by the definition of $H_\infty$. This implies that the map $h_{\pi,\infty}$ factors through $H_\infty \cdot \mathbb{G}_{m,\RR}$ and therefore the map $h_{\pi}$ factors through the base change of the group $\mathrm{Res}^F_\QQ(H)\cdot \mathbb{G}_{m,\QQ}$ to $\RR$. This implies the result.
\end{proof}
Note that the dimension of the group $H$ is equal to the dimension of all groups $H_p$ from section \ref{Section-Automorphic}. Our results in that section make it reasonable to make the following conjecture. 
\begin{conj}\label{MTconj}
    If $\pi$ is of general type  then the Mumford-Tate group of $M_\pi$ is equal to $\mathrm{Res}^F_\QQ(H)\cdot \mathbb{G}_{m,\QQ}$. 
\end{conj}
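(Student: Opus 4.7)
The plan is to prove Conjecture \ref{MTconj} by combining the inclusion already established in Lemma \ref{MTcontained} with a matching dimension count obtained through the Mumford--Tate conjecture for $M_\pi$ and Theorem \ref{main}. Since both sides will turn out to be connected algebraic groups of the same dimension with one contained in the other, equality follows. The argument is inherently conditional on the Mumford--Tate conjecture (and implicitly on having a sufficiently well-behaved motive $M_\pi$ with all its comparison isomorphisms); these are the main obstacles to an unconditional proof.

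The first step is to pin down the compatibility between the Archimedean cocycle used to define $H$ and the $p$-adic cocycles used to define $H_p$. The key observation is that the matrices $\alpha_\sigma, \alpha_\tau \in \GL_n(E)$ arise from isomorphisms of motives ${}^\sigma M_\pi \simeq M_\pi \otimes \chi_\sigma$ and ${}^\tau M_\pi \simeq M_\pi^\vee \otimes \QQ(2d) \otimes \eta_\tau$; hence the single $E$-rational cocycle $f : \Gamma \to \mathrm{Aut}_E(\SL_n)$ constructed from the Hodge side recovers, after base change, the cocycle used in section \ref{Section-Automorphic} for every prime $p$. Consequently, $H \times_F F_p \cong H_p$ for every $p$, and in particular the $\QQ$-algebraic group $\mathrm{Res}^F_\QQ(H) \cdot \mathbb{G}_{m,\QQ}$ base-changes to $(\mathrm{Res}^{F_p}_{\QQ_p} H_p) \cdot \mathbb{G}_{m,\QQ_p}$ for every $p$.

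Next, I apply Theorem \ref{main}: the connected component of the $\QQ_p$-Zariski closure of $\rho_{\pi,p}(G_K)$ equals $(\mathrm{Res}^{F_p}_{\QQ_p} H_p) \cdot \mathbb{G}_{m,\QQ_p}$. Assuming the Mumford--Tate conjecture for $M_\pi$, this connected component also equals $G \times_\QQ \QQ_p$, where $G$ denotes the Mumford--Tate group. Combining with the previous step gives
\[
G \times_\QQ \QQ_p \;=\; \bigl(\mathrm{Res}^F_\QQ(H) \cdot \mathbb{G}_{m,\QQ}\bigr) \times_\QQ \QQ_p,
\]
so $\dim G = \dim\bigl(\mathrm{Res}^F_\QQ(H) \cdot \mathbb{G}_{m,\QQ}\bigr)$.

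To conclude, I observe that both groups are connected: $G$ is connected by definition of the Mumford--Tate group, and $\mathrm{Res}^F_\QQ(H) \cdot \mathbb{G}_{m,\QQ}$ is connected because $H$ is a semi-simple form of $\SL_n$ (hence connected) and Weil restriction preserves connectedness over a field of characteristic zero. Lemma \ref{MTcontained} provides the inclusion $G \subseteq \mathrm{Res}^F_\QQ(H) \cdot \mathbb{G}_{m,\QQ}$, and equality of dimensions between two connected groups with one contained in the other forces them to coincide, proving the conjecture. The genuinely hard input is the Mumford--Tate conjecture for $M_\pi$; the rest of the argument should be viewed as showing that the explicit group $\mathrm{Res}^F_\QQ(H) \cdot \mathbb{G}_{m,\QQ}$ constructed from extra-twists is precisely the right candidate on the motivic side to match the Galois image computed in Theorem \ref{main}.
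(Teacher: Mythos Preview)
The statement you are addressing is a \emph{conjecture} in the paper, not a theorem; the paper offers no proof of it in general and presents it as the natural expectation motivated by the dimension match between $H$ and the groups $H_p$. So there is no paper proof to compare against directly. What the paper \emph{does} prove is the special case of an abelian motive (Corollary~5.3), and your argument is structurally the same as that one: sandwich $\mathrm{MT}(M_\pi)$ between the Galois monodromy and $\mathrm{Res}^F_\QQ(H)\cdot\mathbb G_m$ and compare dimensions. One difference worth noting is that the paper's abelian-case argument only needs Deligne's one-directional inclusion (Galois monodromy $\subseteq \mathrm{MT}$), not the full Mumford--Tate conjecture; since Lemma~\ref{MTcontained} already supplies the other containment, the weaker input suffices, and you could sharpen your hypotheses accordingly.

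Two points in your write-up deserve more care. First, you invoke Theorem~\ref{main} to identify the connected $\QQ_p$-Zariski closure of the Galois image, but that theorem is stated and proved only for $\GL_3$. Conjecture~\ref{MTconj} is formulated for arbitrary $n$, and for $n>3$ the Galois-side input you need is itself only conjectural (it is essentially Conjecture~4.13, or equivalently validity of $\rho'_{\pi,p}$). You should make this additional hypothesis explicit rather than hiding it behind a reference to Theorem~\ref{main}. Second, your claim that $H\times_F F_p \cong H_p$ is plausible but not automatic: the cocycle defining $H$ is built from matrices $\alpha_\sigma\in\GL_n(E)$ realizing isomorphisms of rational Hodge structures, while the cocycle defining $H_p$ uses matrices in $\GL_n(E_p)$ realizing isomorphisms of Galois representations. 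These agree only up to scalars, so one must check that the resulting cohomology classes in $H^1(\Gamma,\mathrm{Aut}_{E_p}(\SL_n))$ coincide, which comes down to compatibility of the motivic extra-twists with the Betti--\'etale comparison. The paper avoids this issue by only asserting that $\dim H=\dim H_p$, which is all the sandwich argument actually requires.
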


We showed that the group $H$ (or any of the $H_p$'s from section \ref{Section-Automorphic}) gives an upper bound for the Mumford-Tate group. On the other hand,
in the special case that $M$ is an abelian motive, Deligne shows that the Mumford-Tate group $\mathrm{MT(M_\pi)}$, after base changing to $\QQ_p$, always contains the connected component of the image of the $p$-adic Galois representations. So if we known that the image is open in $H_p$ even for one prime, the conjecture above follows. In particular we have the following result in the $n=2$ case (which was probably known to Nekovar): 
\begin{cor}
    Let $f$ be a non-CM Hilbert modular newform of parallel weight 2 over the totally real field $K$ and assume there is an abelian variety $A_f$ associated to it. Then  conjecture \ref{MTconj} and the Mumford-Tate conjecture hold for $A_f$. 
\end{cor}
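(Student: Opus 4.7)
The plan is to sandwich the Mumford-Tate group $\mathrm{MT}(M_{\pi})$ between two $\QQ$-algebraic groups that will turn out to coincide. An upper bound is already at hand from Lemma \ref{MTcontained}, namely
\[
\mathrm{MT}(M_{\pi}) \subseteq \mathrm{Res}^F_{\QQ}(H) \cdot \mathbb{G}_{m,\QQ}.
\]
For a matching lower bound I would combine the $\GL_2$-openness statement of Corollary \ref{bigGL2} with the fact that for abelian motives the Mumford-Tate group contains the connected algebraic monodromy group at $p$ (Deligne).

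Concretely, since $f$ is non-CM, let $\pi$ be the associated automorphic representation and apply Corollary \ref{bigGL2}: for each prime $p$, after passing to a suitable finite extension $L/K$, the image $\rho_{\pi,p}(G_L)$ is $p$-adically open in $H_p(F_p) \cdot \QQ_p^{\times}$. Hence the connected component of the $\QQ_p$-Zariski closure of the Galois image is exactly $(\mathrm{Res}^{F_p}_{\QQ_p} H_p) \cdot \mathbb{G}_{m,\QQ_p}$. Since $A_f$ is an abelian variety, $M_{\pi} = h^1(A_f)$ is an abelian motive, and Deligne's theorem then gives
\[
(\mathrm{Res}^{F_p}_{\QQ_p} H_p) \cdot \mathbb{G}_{m,\QQ_p} \subseteq \mathrm{MT}(M_{\pi}) \times_{\QQ} \QQ_p.
\]

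Because Weil restriction commutes with base change, the base change to $\QQ_p$ of the upper bound $\mathrm{Res}^F_{\QQ}(H) \cdot \mathbb{G}_{m,\QQ}$ is exactly $(\mathrm{Res}^{F_p}_{\QQ_p} H_p) \cdot \mathbb{G}_{m,\QQ_p}$, so the two bounds agree over $\QQ_p$. A dimension count (together with the fact that both sides are connected once passed to identity components) then forces equality already over $\QQ$. This simultaneously yields Conjecture \ref{MTconj} (the global identification of $\mathrm{MT}(M_{\pi})$) and the Mumford-Tate conjecture for $A_f$ (the coincidence of $\mathrm{MT}(M_{\pi}) \times_{\QQ} \QQ_p$ with the connected component of the $\QQ_p$-Zariski closure of the Galois image).

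The one step requiring care, and which I expect to be the main technical point, is verifying that the $F_p$-group $H_p$ built in Section \ref{Section-Automorphic} from extra-twists of the Galois representation is genuinely the base change of the $F$-group $H$ built in Section \ref{Section-Mumford-Tate} from extra-twists of the rational Hodge structure. This should follow because for a non-CM Hilbert modular form the inner-twists of $\pi$, of $\rho_{\pi,p}$, and of the rational Hodge structure on $H_B(M_\pi)$ are identified with one another via strong multiplicity one and the Betti--\'etale comparison, so the defining intertwiners $\alpha_\sigma$ on the two sides can be chosen compatibly and the resulting $1$-cocycles $f$ agree after base change. Once that compatibility is in place the sandwich argument above closes the proof.
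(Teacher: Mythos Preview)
Your proposal is correct and follows essentially the same sandwich argument as the paper: upper bound from Lemma~\ref{MTcontained}, lower bound from Corollary~\ref{bigGL2} combined with Deligne's containment for abelian motives, and then a dimension comparison to force equality. The one point you flag as ``requiring care''---that the $F_p$-group $H_p$ of Section~\ref{Section-Automorphic} is literally the base change of the $F$-group $H$ of Section~\ref{Section-Mumford-Tate}---is in fact not needed for the argument to close: since $H$ and $H_p$ are each forms of $\SL_2$ over $F$ and $F_p$ respectively, the groups $\mathrm{Res}^F_{\QQ}(H)\cdot\mathbb{G}_m$ and $(\mathrm{Res}^{F_p}_{\QQ_p}H_p)\cdot\mathbb{G}_{m,\QQ_p}$ automatically have the same dimension, so the sandwich collapses regardless of whether the two cocycles have been matched up.
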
\begin{proof}
    Note that $\mathrm{MT}(M_\pi)\otimes_{\QQ}\QQ_p$ is contained in $(\mathrm{Res}^F_\QQ(H)\cdot \mathbb{G}_m) \otimes \QQ_p$ by lemma \ref{MTcontained} and contains the connected component of the Zariski-closure of the image of $\rho_{f,p}$ which is equal to $\mathrm{Res}^{F_p}_{\QQ_p}H_p(\QQ_p)\cdot\QQ_p^{\times}$ by \ref{bigGL2}. Since the dimensions match, both must be equality.
\end{proof}

At the end we want to come back to remark \ref{remRou}. Recall that by \cite{chenevier2014p}, for each prime $p$ there exist an Azumaya algebra $D_p$ over $\QQ(\pi)_p:=\QQ(\pi)\otimes_{\QQ}\QQ_p$ such that the Galois representation $\rho_{\pi,p}$ factors through 
$D_{p} \subset D_{p} \otimes_{\QQ(\pi)} E \simeq M_n(E_p)$, assuming irreducibility of the Galois representation. We are interested to see if the local object $D_p$ should be a global object $D$ defined over $\QQ(\pi)$. 
In the $n=1$ case this is clear. In the $n=2$ case, since $\rho_{\pi,p}$ is odd, the residual representation is multiplicity free and a result of Bellaïche and Chenevier \cite{bellaiche2009families} shows that in this case every pseudo-representation can be defined over its trace field and hence $D_p \simeq \GL_2$ for all primes $p$. So $D\simeq \GL_2$ works. 

Now assume that $\pi$ is of general type and assume conjecture \ref{MTconj} and the Mumford-Tate conjecture. First notice that for every $\sigma \in \mathrm{Gal}(E/\QQ(\pi))$ we have an inner-twist $\pi \simeq {}^\sigma\pi$ therefore $F \subseteq \QQ(\pi)$. 
Now we know by our assumptions that for some finite Galois extension $L/K$ the image of $\rho_{\pi,p}|_{G_L}$ is open in $H_p(F_p)\cdot \QQ_p^{\times}$. This is the $\QQ_p$-Zariski closure of the image hence the $F_p$-Zariski closure of the image is $H_p(F_p)\cdot F_p^{\times}$. Note that the inner product is happening in $\GL_n$ so since $H_p$ is a form of $\SL_n$ we deduce that $G_p:=H_p \cdot \mathbb{G}_{m,F_p}$ is a form of $\GL_n$. Since the image is contained in $D_p^{\times}$ which is an algebraic group over $\QQ(\pi)_p$ and $F_p \subseteq \QQ(\pi)_p$, we have 
$$
G_p (F_p) \subseteq D_p^{\times}(\QQ(\pi)_p)\subseteq \GL_n(E_p)
$$
and the base change of either $G_p$ or $D_p^\times$ to $E_p$ is equal to $\GL_n$. This implies that $D_p^{\times}$ is the $\QQ(\pi)_p$-Zariski closure of the image of the Galois representation. In particular, by the Mumford-Tate conjecture, the groups $D_p^\times$ should come from a Global group, namely the $\QQ(\pi)$-Zariski closure of the image of the map $h_{\pi,\infty}$. 
This is an inner-form of $\GL_n$ since all $G_p$'s become inner forms over $F_p^\mathrm{inn}$ so it is the group of units of a central simple algebra $D$ over $\QQ(\pi)$. 

In particular in the $n=3$ case, $\pi$ is either an induction of a character from a degree 3 extension, essentially $\mathrm{sym}^2$ or of general type. In the first two cases $D_p$ being global reduces to the $n=1$
 and $n=2$ case and in the third case follows from the discussion above (assuming all the above conjectures). This makes it reasonable to make the following conjecture:
\begin{conj}
    Let $\pi$ be a regular cuspidal algebraic automorphic representation of $\GL_n(\AA_K)$ and $\rho_{\pi,p}$ the associated Galois representation and $D_p$ the Azumaya algebra coming from \cite{chenevier2014p}. Then there exists a central simple algebra $D$ over $\QQ(\pi)$ such that $D_p \simeq D \otimes_{\QQ(\pi)}\QQ(\pi)_p$ for every $p$.
\end{conj}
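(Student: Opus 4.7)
The plan is to prove this conditionally, following the strategy outlined by the author in the paragraphs preceding the conjecture, and to proceed by a reduction to the general-type case combined with a globalization argument coming from the Mumford--Tate group. First I would settle the low-dimensional anchor cases: for $n=1$ the statement is trivial with $D=\QQ(\pi)$, and for $n=2$ the oddness of $\rho_{\pi,p}$ forces the residual pseudo-representation to have distinct characters, so by the theorem of Bellaïche--Chenevier the pseudo-representation descends to its trace field and we may take $D = M_2(\QQ(\pi))$. These will serve as the base of an induction on the ``complexity'' of $\pi$.

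Next I would carry out a reduction step via Langlands functoriality: if $\pi$ is not of general type, then (assuming the relevant cases of functoriality) it is the transfer of some automorphic representation $\pi'$ of a reductive group $H'/K$ of strictly smaller dimension. In that situation the Chenevier Azumaya algebra attached to $\rho_{\pi,p}$ is built out of the one attached to $\rho_{\pi',p}$ by a functorial construction (e.g.\ induced algebras for automorphic induction, or $\mathrm{sym}^2$ of a quaternion algebra for the $\mathrm{sym}^2$-case), so the existence of a global $D$ for $\pi$ follows from its existence for $\pi'$. This reduces us to the general-type case.

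Now suppose $\pi$ is of general type, and grant the Mumford--Tate conjecture together with Conjecture \ref{MTconj}. By Theorem \ref{main} (after suitable finite base change) the image of $\rho_{\pi,p}$ is open in $H_p(F_p)\cdot \QQ_p^{\times}$, and the author's analysis shows that the $\QQ(\pi)_p$-Zariski closure of this image is $G_p := H_p\cdot \mathbb{G}_{m,\QQ(\pi)_p}$, an inner form of $\GL_n$ over $\QQ(\pi)_p$ whose $\QQ(\pi)_p$-points coincide with the unit group of $D_p$. The Mumford--Tate conjecture, applied to the motive $M_\pi$, identifies this $\QQ(\pi)_p$-group with $\mathrm{MT}(M_\pi)_{\QQ(\pi)_p}^{\mathrm{Zar}/\QQ(\pi)_p}$. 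Thus $D_p^\times$ is obtained by base change from a global group $G/\QQ(\pi)$, namely the $\QQ(\pi)$-Zariski closure of the image of $h_{\pi,\infty}$ inside $\GL_n$, and this $G$ is an inner form of $\GL_n$ because, as observed by the author, each $G_p$ becomes an inner form over $F_p^{\mathrm{inn}}$. Setting $D$ to be the central simple algebra with $D^\times = G$ (well-defined since inner forms of $\GL_n$ correspond bijectively to central simple algebras of degree $n$) yields the desired global $D$, and the natural map $D\otimes_{\QQ(\pi)}\QQ(\pi)_p \to D_p$ is an isomorphism by comparing $\QQ(\pi)_p$-algebras with the same generic fiber and matching dimensions.

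The main obstacle is of course that this argument is thoroughly conditional: it rests on the existence of the motive $M_\pi$, Conjecture \ref{MTconj}, and the Mumford--Tate conjecture for $M_\pi$, and on automorphic functoriality to perform the reduction in the non-general-type cases. A secondary technical difficulty is the compatibility, over varying $p$, of the descended algebra: one must check that a single $\QQ(\pi)$-algebra $D$ specializes correctly to $D_p$ at every prime, which amounts to showing that the local Brauer classes $[D_p]\in \mathrm{Br}(\QQ(\pi)_p)$ arise as the localizations of a single class in $\mathrm{Br}(\QQ(\pi))$. The Hodge-theoretic construction of $G$ over $\QQ(\pi)$ gives this global class at once via $[G\text{ as inner form of }\GL_n] \in H^1(\QQ(\pi),\mathrm{PGL}_n) = \mathrm{Br}(\QQ(\pi))[n]$, so what one really has to verify is that the class produced by the Hodge structure agrees with the class of $D_p$ at each $p$; this equality is precisely what the Mumford--Tate conjecture delivers in the cases where it is known.
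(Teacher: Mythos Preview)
The statement in question is a \emph{conjecture}, and the paper does not supply a proof of it; the preceding paragraphs only motivate it by sketching the conditional argument you have reproduced (base cases $n=1,2$, reduction via functoriality, and the general-type case via Conjecture~\ref{MTconj} plus the Mumford--Tate conjecture). Your proposal is thus not a proof but a faithful elaboration of the paper's own heuristic, resting on exactly the same unproven inputs, and the paper explicitly presents this reasoning as making the conjecture ``reasonable'' rather than established.

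One small slip: you write $G_p := H_p\cdot \mathbb{G}_{m,\QQ(\pi)_p}$, but $H_p$ is defined over $F_p$, not $\QQ(\pi)_p$; the paper first takes the $F_p$-Zariski closure $H_p\cdot\mathbb{G}_{m,F_p}$ and then argues separately that the $\QQ(\pi)_p$-Zariski closure is $D_p^\times$. This does not affect the overall shape of the argument, which is identical to the paper's motivation.
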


In particular this conjecture implies that for all but finitely many primes, the representation $\rho_{\pi,p}$ can be defined over its trace field.

\bibliographystyle{amsplain}
\bibliography{refs}

\providecommand{\bysame}{\leavevmode\hbox to3em{\hrulefill}\thinspace}
\providecommand{\MR}{\relax\ifhmode\unskip\space\fi MR }
% \MRhref is called by the amsart/book/proc definition of \MR.
\providecommand{\MRhref}[2]{%
  \href{http://www.ams.org/mathscinet-getitem?mr=#1}{#2}
}
\providecommand{\href}[2]{#2}
\begin{thebibliography}{10}

\bibitem{arthur1989simple}
James Arthur and Laurent Clozel, \emph{Simple algebras, base change, and the advanced theory of the trace formula}, no. 120, Princeton University Press, 1989.

\bibitem{barnet2014potential}
Thomas Barnet-Lamb, Toby Gee, David Geraghty, and Richard Taylor, \emph{Potential automorphy and change of weight}, Annals of Mathematics (2014), 501--609.

\bibitem{bellaiche2009families}
Jo{\"e}l Bella{\i}che and Ga{\"e}tan Chenevier, \emph{Families of galois representations and selmer groups}, Ast{\'e}risque \textbf{324} (2009), 1--314.

\bibitem{bockle2024irreducibility}
Gebhard B{\"o}ckle and Chun-Yin Hui, \emph{Weak abelian direct summands and irreducibility of galois representations}, arXiv preprint arXiv:2404.08954 (2024).

\bibitem{brion2015extensions}
Michel Brion, \emph{On extensions of algebraic groups with finite quotient}, Pacific J. Math \textbf{279} (2015), no.~1-2, 135--153.

\bibitem{chenevier2014p}
Ga{\"e}tan Chenevier, \emph{The p-adic analytic space of pseudocharacters of a profinite group and pseudorepresentations over arbitrary rings}, Automorphic Forms and Galois Representations \textbf{1} (2014), 221--285.

\bibitem{clozel1991representations}
Laurent Clozel, \emph{Repr{\'e}sentations galoisiennes associ{\'e}es aux repr{\'e}sentations automorphes autoduales de $\mathrm{GL}(n)$}, Publications Math{\'e}matiques de l'IH{\'E}S \textbf{73} (1991), 97--145.

\bibitem{van1994non}
Bert~van Geemen and Jaap Top, \emph{A non-selfdual automorphic representation of $\mathrm{GL}_3$ and a galois representation}, Invent. math \textbf{117} (1994), no.~3, 391--401.

\bibitem{gelbart1978relation}
Stephen Gelbart and Herv{\'e} Jacquet, \emph{A relation between automorphic representations of $\mathrm{GL}_2$ and $\mathrm{GL}_3$}, Annales Scientifiques de l'{\'E}cole Normale Sup{\'e}rieure, vol.~11, 1978, pp.~471--542.

\bibitem{harris2016rigid}
Michael Harris, Kai-Wen Lan, Richard Taylor, and Jack Thorne, \emph{On the rigid cohomology of certain shimura varieties}, Research in the Mathematical Sciences \textbf{3} (2016), 1--308.

\bibitem{hui2013monodromy}
Chun~Yin Hui, \emph{Monodromy of galois representations and equal-rank subalgebra equivalence}, Ph.D. thesis, Indiana University, 2013.

\bibitem{larsen1990determining}
Michael Larsen and Richard Pink, \emph{Determining representations from invariant dimensions}, Inventiones mathematicae \textbf{102} (1990), no.~1, 377--398.

\bibitem{momose1981adic}
Fumiyuki Momose, \emph{On the $\ell$-adic representations attached to modular forms}, J. Fac. Sci. Univ. Tokyo Sect. IA Math \textbf{28} (1981), no.~1, 89--109.

\bibitem{nekovavr2012level}
Jan Nekov{\'a}{\v{r}}, \emph{Level raising and anticyclotomic selmer groups for hilbert modular forms of weight two}, Canadian Journal of Mathematics \textbf{64} (2012), no.~3, 588--668.

\bibitem{ramakrishnan2014exercise}
Dinakar Ramakrishnan, \emph{An exercise concerning the selfdual cusp forms on $\mathrm{GL}(3)$}, Indian J. Pure Appl. Math \textbf{45} (2014), no.~5, 777--785.

\bibitem{ribet1976galois}
Kenneth~A Ribet, \emph{Galois action on division points of abelian varieties with real multiplications}, American Journal of Mathematics \textbf{98} (1976), no.~3, 751--804.

\bibitem{ribet1980twists}
\bysame, \emph{Twists of modular forms and endomorphisms of abelian varieties}, Mathematische Annalen \textbf{253} (1980), no.~1, 43--62.

\bibitem{ribet2006galois}
\bysame, \emph{Galois representations attached to eigenforms with nebentypus}, Modular Functions of One Variable V: Proceedings International Conference, University of Bonn, Sonderforschungsbereich Theoretische Mathematik July 2--14, 1976, Springer, 2006, pp.~18--52.

\bibitem{rouquier1996caracterisation}
Rapha{\"e}l Rouquier, \emph{Caract{\'e}risation des caracteres et pseudo-caracteres}, Journal of Algebra \textbf{180} (1996), no.~2, 571--586.

\bibitem{scholze2015torsion}
Peter Scholze, \emph{On torsion in the cohomology of locally symmetric varieties}, Annals of Mathematics (2015), 945--1066.

\bibitem{serre1972proprietes}
Jean-Pierre Serre, \emph{Propri{\'e}t{\'e}s galoisiennes des points d’ordre fini des courbes elliptiques}, Invent. math \textbf{15} (1972), 259--331.

\bibitem{upton2009galois}
Margaret~G Upton, \emph{Galois representations attached to picard curves}, Journal of Algebra \textbf{322} (2009), no.~4, 1038--1059.

\end{thebibliography}

\end{document}